\documentclass[12pt]{amsart}
\usepackage{hyperref}
\usepackage{amsmath}
\usepackage{amsthm}
\usepackage{amsfonts}
\usepackage{amssymb}
\usepackage{graphicx}
\DeclareGraphicsExtensions{.eps}

\newtheorem{theorem}{Theorem}[section]
\newtheorem{hyp*}{Conjecture}
\newtheorem{lemma}[theorem]{Lemma}
\newtheorem{example}[theorem]{Example}
\newtheorem{prop}[theorem]{Proposition}
\newtheorem{corollary}[theorem]{Corollary}

\setlength{\hoffset}{-15mm} \setlength{\voffset}{-15mm}
\setlength{\textheight}{235mm} \setlength{\textwidth}{150mm}

\def\sli{\sum\limits}

\def\g{\gamma}

\def\la{\lambda}

\def\a{\alpha}
\def\R{\mathbb{R}}

\def\vf{\varphi}

\def\s{\sigma}

\def\cC{\mathcal{C}}
\def\cD{\mathcal{D}}

\def\J{\mathcal{J}}

\newcommand{\ci}[1]{_{ {}_{\scriptstyle #1}}}


\newcommand{\supp}{\operatorname{supp}}

\newcommand{\al}{\alpha}

\newcommand{\diam}{\operatorname{diam}}

\newcommand{\C}{\mathbb{C}}

\newcommand{\e}{\varepsilon}
\newcommand{\dist}{\operatorname{dist}}

\newcommand{\HH}{\mathcal{H}}

\newcommand{\wt}{\widetilde}

\newcommand{\cE}{\mathcal{E}}

\def\cyr{\fontencoding{OT2}\fontfamily{wncyr}\selectfont}
\DeclareTextFontCommand{\textcyr}{\cyr}

{\end{list}}

\renewcommand{\Im}{\textup{Im}}

\newcounter{vremennyj}


\numberwithin{equation}{section}

\begin{document}

\title[Cauchy independent measures]{Cauchy independent measures and almost-additivity of analytic capacity}

\author{Vladimir Eiderman}
\address{Vladimir Eiderman, Indiana University, Bloomington, Indiana, USA}
\email{veiderma@indiana.edu}

\author{Alexander Reznikov}
\address{Alexander Reznikov, Michigan State University, East Lansing, Michigan, USA}
\email{rezniko2@msu.edu}

\author{Alexander Volberg}
\address{Alexander Volberg, Michigan State University, East Lansing, Michigan, USA}
\email{volberg@math.msu.edu}

\thanks{ Alexander Volberg \ was partially supported by the U.S.\ NSF grant DMS-0758552}

\begin{abstract}
We show that, given a family of discs centered at a chord-arc curve, the analytic capacity of a union of arbitrary subsets of these discs (one subset in each disc) is comparable with the sum of their analytic capacities. However we need that the discs in the question would be separated, and it is not clear whether the separation condition is essential or not. We apply this result to find families $\{\mu_j\}$ of measures in $\C$ with the following property. If the Cauchy integral operators $\mathcal{C}_{\mu_j}$ from $L^2(\mu_j)$ to itself are bounded uniformly in $j$, then $\mathcal{C}_\mu$, $\mu=\sum\mu_j$, is also bounded from $L^2(\mu)$ to itself.
\end{abstract}

\maketitle

\section{Introduction}\label{int}

We consider two properties of families of sets and measures in the complex plane.

\subsection{Almost additivity of analytic capacity}\label{one}

The {\it analytic capacity} $\gamma(F)$ of a compact set $F$ in $\C$ is defined by the equality
$$
\gamma(F) = \sup|f'(\infty)|,
$$
where the supremum is taken over all analytic functions $f\colon \C\setminus F \to \C$ with $|f|\le 1$ on $\C\setminus F$. Here $f'(\infty)=\lim_{z\to \infty}z(f(z)-f(\infty))$. For non-compact $F$ we set
$$
\gamma(F) = \sup\{\gamma(K): K\text{ compact, }K\subset F\}
$$
\cite{G}. For a summary of equivalent definitions the reader can see \cite{Tolsa-book} and \cite{Volberg}.

In the celebrated paper \cite{Tolsa-sem} Tolsa established the countable semiadditivity of the analytic capacity, i.~e. that
$$
\g\big(\bigcup F_i\big)\le C\sum\g(F_i)
$$
with an absolute constant $C$. But the inverse inequality does not hold in general. To see that we consider the $n$-th generation $E_n^{1/4}$ of the corner 1/4-Cantor set constructed in the following way. Start with the unit square (0-th generation). The $j$-th generation consists of $4^j$ squares $E_{j,k}$ with side length $4^{-j}$, each square $E_{j,k}$ contains four squares of $(j+1)$-th generation, located at the corners of $E_{j,k}$, and so on. It's known \cite{MTV} that $\g(\bigcup_{k=1}^{4^n}E_{n,k})=\g(E_n^{1/4})\asymp 1/\sqrt n$ with absolute constants of comparison; here $P\asymp Q$ means that $cP\le Q\le CP$. Positive constants $c$, $C$, $a$, $A$ (possibly with indices) are not necessarily the same at each appearance. On the other hand,
$$
\sum_{k=1}^{4^n}\g(E_{n,k})\asymp4^n\cdot 4^{-n}=1.
$$
Thus, ``almost additivity'' $\g\big(\bigcup F_i\big)\asymp\sum\g(F_i)$ of the analytic capacity does not take place in general. N.~A.~Shirokov posed the question on the validity of this property for the special class of sets described in the following Theorem \ref{superth}.

We say that $\Gamma$ is a chord-arc curve, if
\begin{equation}\label{chordarc}
|t-s|\le A_0|z(t)-z(s)|,\quad A_0>1,
\end{equation}
where $z(t)$ is the arc-length parametrization of $\Gamma$.

\begin{theorem}\label{superth}
Let $D_j$ be discs with centers on a chord-arc curve $\Gamma$, such that $\la D_j \cap \la D_k = \emptyset$, $j\not=k$, for some $\la>1$. Let $E_j\subset D_j$ be arbitrary compact sets. Then there exists a constant $c=c(\la, A_0)$, such that
\begin{equation}\label{almadd0}
\gamma\big(\bigcup E_j\big) \ge c \sum_j \gamma(E_j).
\end{equation}
\end{theorem}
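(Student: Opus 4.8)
The plan is to establish the lower bound \eqref{almadd0} by producing, for each index $j$, a competing analytic function whose derivative at infinity captures (up to constants) the capacity $\gamma(E_j)$, and then to superpose these functions while controlling the interaction between different discs. Concretely, for each $j$ pick an extremal (or near-extremal) function $f_j$ for $\gamma(E_j)$, so that $f_j$ is analytic off $E_j$, bounded by $1$, and $|f_j'(\infty)|\ge (1-\e)\gamma(E_j)$. The key analytic object is the Cauchy transform: writing $f_j(z)=\tfrac{1}{\pi}\int \frac{d\nu_j(w)}{w-z}$ for a complex measure $\nu_j$ supported on $E_j\subset D_j$ with $\|\nu_j\|\lesssim \gamma(E_j)$ and $\nu_j(\C)/\pi$ comparable to $\gamma(E_j)$ in modulus (this is the standard description of analytic capacity via the Cauchy potential, together with Tolsa's characterization), one wants to consider $F=\sum_j c_j f_j$ for suitable unimodular constants $c_j$ (chosen so the leading coefficients add constructively, e.g. $c_j=\overline{f_j'(\infty)}/|f_j'(\infty)|$), and check that $F$ (or a fixed multiple of it) is an admissible function for $\gamma(\bigcup E_j)$.

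The heart of the matter is the uniform bound $\|F\|_{L^\infty(\C\setminus \bigcup E_j)}\lesssim 1$. Near a fixed disc $D_k$, the term $c_kf_k$ is bounded by $1$ automatically, so I must bound the tail $\sum_{j\ne k}c_jf_j$ on $\la D_k$ (or on $D_k$). Here the separation hypothesis $\la D_j\cap\la D_k=\emptyset$ enters decisively: for $z\in D_k$ and $w\in E_j$ with $j\ne k$ one has $|w-z|\gtrsim_\la \max(r_j,r_k,\dist(D_j,D_k))$, so $|f_j(z)|\lesssim \|\nu_j\|/|w-z|\lesssim \gamma(E_j)/(r_j+\dist(D_j,D_k))$. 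The chord-arc condition \eqref{chordarc} then forces the centers to be spread along $\Gamma$ in a quasi-one-dimensional fashion, which, combined with $\gamma(E_j)\le \gamma(D_j)=r_j$, gives a bound of the form $\sum_{j\ne k}\gamma(E_j)/(r_j+\dist(D_j,D_k))\lesssim \sum_{j\ne k} r_j/(r_j+\dist_{\Gamma}(\text{centers}))\lesssim C(\la,A_0)$ by a Schur-test / geometric-series argument along the curve: the discs of comparable size at comparable distance from $D_k$ are boundedly many (by separation), and summing over dyadic distance scales converges. This is the step I expect to be the main obstacle — making the ``quasi-one-dimensional packing'' estimate rigorous and uniform, i.e. verifying that $\sum_{j\ne k} r_j/(r_j+\dist(z(t_j),z(t_k)))$ is bounded independently of $k$ using only \eqref{chordarc} and the $\la$-separation, perhaps after passing to the arc-length parametrization where the estimate becomes $\sum_{j\ne k} \ell_j/(\ell_j+|t_j-t_k|)\lesssim 1$ with $\ell_j\asymp r_j$ and the intervals $(t_j-c\ell_j,t_j+c\ell_j)$ pairwise disjoint.

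Once the $L^\infty$ bound $\|F\|\le M=M(\la,A_0)$ is in hand, $F/M$ is admissible for $\gamma(\bigcup E_j)$, so
\[
\gamma\Big(\bigcup_j E_j\Big)\ \ge\ \frac{1}{M}\,|F'(\infty)|\ =\ \frac{1}{M}\,\Big|\sum_j c_j f_j'(\infty)\Big|\ =\ \frac{1}{M}\sum_j |f_j'(\infty)|\ \ge\ \frac{1-\e}{M}\sum_j \gamma(E_j),
\]
and letting $\e\to 0$ (and passing from finitely many $j$ to the general case by the definition of $\gamma$ on non-compact sets, noting the bound is uniform in the number of discs) yields \eqref{almadd0} with $c=1/M$. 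Two technical points need care along the way: first, that one may take the $\nu_j$ with $\|\nu_j\|\lesssim\gamma(E_j)$ uniformly — this is exactly where Tolsa's theorem (the comparability of $\gamma$ with $\gamma_+$, capacity via positive Cauchy potentials) or the standard dual formulation of analytic capacity is invoked, so that the off-diagonal decay rate is governed by $\gamma(E_j)$ rather than by some larger quantity; second, that the rotation-by-$c_j$ does not destroy analyticity (it does not, since each $c_j f_j$ is still analytic off $E_j$ and the finite sum is analytic off $\bigcup E_j$), and that $F$ extends analytically to $\infty$ with $F(\infty)=0$, which is immediate from the Cauchy-potential representation.
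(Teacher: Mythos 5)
There is a genuine gap at what you yourself identify as the heart of the matter: the packing estimate
$$
\sum_{j\ne k}\frac{r_j}{r_j+\dist(D_j,D_k)}\ \le\ C(\lambda,A_0)
$$
is simply false. Take all $r_j=1$ and centers $t_j=3j$ on the real line, $j=1,\dots,N$; the discs are $\lambda$-separated and lie on the straightest possible chord-arc curve, yet the sum is $\asymp\log N$. First-order decay $\|\nu_j\|/\dist$ of a Cauchy potential is exactly borderline non-summable along a one-dimensional arrangement of discs, so the naive superposition $F=\sum_j c_jf_j$ is \emph{not} uniformly bounded, and no choice of unimodular constants $c_j$ or appeal to $\gamma\asymp\gamma_+$ repairs this: even for positive measures of linear growth the potential decays only like $\|\nu_j\|/\dist$. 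This logarithmic divergence is precisely the classical obstruction to proving super-additivity of $\gamma$ by adding extremal functions, and your Schur-test step cannot be "made rigorous" because the inequality it is meant to prove is wrong.

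The paper's argument (Section 2, for discs meeting a line) gets around this in two ways that your proposal is missing. First, each $f_j$ is compared with a model function $\varphi_j$ (the extremal function of a horizontal segment $L_j\subset\lambda'D_j$ of capacity $\asymp\gamma(E_j)$); the Schwarz lemma gives the \emph{quadratic} decay $|f_j-b(\lambda)^{-1}\varphi_j|\lesssim r_j\gamma_j/\dist(z,E_j\cup L_j)^2$, and even then the resulting sum is controlled only after discarding a bad subfamily via a Marcinkiewicz-function/Chebyshev selection. Second, the surviving sum $\sum\varphi_j$ is still unbounded in modulus, but its \emph{imaginary part} is bounded by a Poisson-kernel estimate because the $L_j$ sit on $\R$; one then exponentiates, using $e^{iF}-1$ as the admissible function, since $|e^{iF}|=e^{-\Im F}$ only sees $\Im F$. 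Note that this imaginary-part trick fails on a general chord-arc curve (the paper's own remark after \eqref{img} points this out), which is why the full Theorem \ref{superth} is proved by an entirely different route in Section 6: a Menger-curvature estimate $c^2(\mu)\le\sum_jc^2(\mu_j)+C\|\mu\|$ combined with the curvature characterization \eqref{f61} of $\gamma$. Your outline, as written, proves neither the special nor the general case.
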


We conclude the present subsection with a useful corollary.

\begin{corollary}\label{circle}
If $\Gamma$ in Theorem \ref{superth} is the union of $n$ chord-arc curves $\Gamma_i$ with the same constant $A_0$, then
$$
\gamma\big(\bigcup E_j\big) \ge \frac cn \sli_j \gamma(E_j),\quad c=c(\la, A_0).
$$
In particular, $\Gamma$ might be a circle ($n=2$). Moreover, $\{D_j\}$ might be a family of $\lambda$-separated discs (that is $\la D_j$ are disjoint), $\la>1$, intersecting the same circle $\Gamma$, not necessarily with centers on $\Gamma$.
\end{corollary}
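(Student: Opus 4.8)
The plan is to deduce Corollary~\ref{circle} from Theorem~\ref{superth} by a splitting argument together with an elementary observation about how a circle decomposes into boundedly many chord-arc arcs. First I would prove the statement for a union $\Gamma=\bigcup_{i=1}^n \Gamma_i$ of chord-arc curves with a common constant $A_0$. Partition the index set $\{j\}$ according to which curve $\Gamma_i$ the center of $D_j$ lies on (breaking ties arbitrarily), obtaining sets $J_1,\dots,J_n$. For each fixed $i$, the discs $\{D_j:j\in J_i\}$ have centers on the single chord-arc curve $\Gamma_i$ and still satisfy $\la D_j\cap\la D_k=\emptyset$, so Theorem~\ref{superth} gives $\gamma\big(\bigcup_{j\in J_i}E_j\big)\ge c(\la,A_0)\sum_{j\in J_i}\gamma(E_j)$. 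Since $\bigcup_j E_j\supset\bigcup_{j\in J_i}E_j$ for each $i$ and analytic capacity is monotone, we get $\gamma\big(\bigcup_j E_j\big)\ge c\max_i\sum_{j\in J_i}\gamma(E_j)\ge \frac{c}{n}\sum_i\sum_{j\in J_i}\gamma(E_j)=\frac{c}{n}\sum_j\gamma(E_j)$, which is the claimed bound.

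Next I would record that a circle is the union of two chord-arc arcs with a universal constant $A_0$: cut the circle into two half-circles; each half-circle $z(t)$ satisfies $|t-s|\le A_0|z(t)-z(s)|$ for all $t,s$ on that arc with, say, $A_0=\pi/2$ (by concavity of $\sin$ on $[0,\pi/2]$, the chord of an arc of a unit circle has length $\ge \frac{2}{\pi}$ times the arc length). Hence the $n=2$ case of the previous paragraph applies and yields $\gamma\big(\bigcup_j E_j\big)\ge\frac{c}{2}\sum_j\gamma(E_j)$ when the discs have centers on a circle.

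For the last assertion — discs that are $\la$-separated and merely \emph{intersect} a common circle $\Gamma$, without centers necessarily on $\Gamma$ — the idea is to enlarge the circle slightly and project centers onto it, or, more robustly, to replace $\Gamma$ by a bounded number of concentric auxiliary circles. Since $\la D_j$ are disjoint and each $D_j$ meets $\Gamma$, the radius of $D_j$ is controlled and its center lies within distance $r_j$ of $\Gamma$; one then covers the annular neighborhood $\{z:\dist(z,\Gamma)\le \sup_j r_j\}$ by $O(1)$ circles (or chord-arc curves with common constant) whose union contains all the centers, and invokes the $n=O(1)$ case above. I would need to verify that one may split the family $\{D_j\}$ into boundedly many subfamilies, each of which has centers lying on one fixed chord-arc curve and which remains $\la'$-separated for some $\la'>1$; a clean way is: for each $j$ pick a point $w_j\in D_j\cap\Gamma$, let $\Gamma$ itself (a chord-arc curve, indeed a circle) play the role of the curve, and move $E_j$ by the translation sending the center of $D_j$ to $w_j$ — but since analytic capacity is translation invariant only set-by-set, not for the union, this does not immediately work, so instead I keep the sets fixed and argue that each $D_j$ is contained in a slightly larger disc $\widetilde D_j$ centered \emph{on} $\Gamma$ with comparable radius, and that after passing to $O(1)$ subfamilies the $\widetilde D_j$ in each subfamily are still $\la'$-separated.

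The main obstacle I anticipate is precisely this last point: turning ``center near $\Gamma$'' into ``center on $\Gamma$'' while preserving a separation condition. Enlarging each disc to have its center on $\Gamma$ may destroy disjointness of the $\la D_j$, because two discs tangentially close to $\Gamma$ from opposite sides can have their enlargements overlap. The fix is a pigeonhole/coloring argument: the enlarged discs $\widetilde D_j$ have bounded overlap (each point of $\C$ lies in $O(1)$ of them, by a volume/packing estimate coming from the original $\la$-separation and the fact that radii are comparable to distances along $\Gamma$), so by a standard coloring lemma the index set splits into $N=O(1)$ classes on each of which the $\widetilde D_j$ are genuinely $\la'$-separated with centers on $\Gamma$; then apply the $n=N$ case. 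Verifying the bounded-overlap estimate — i.e. that $\la$-separation of the original discs plus the geometry of the chord-arc curve forces the enlargements to have controlled multiplicity — is the one genuinely non-formal step, and I would carry it out using \eqref{chordarc} to compare arc-length along $\Gamma$ with Euclidean distance.
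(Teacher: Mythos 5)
The first two assertions of the corollary are handled in your proposal exactly as in the paper: partition the indices according to which $\Gamma_i$ carries the center, apply Theorem \ref{superth} to the class with the largest capacity sum, and use monotonicity of $\gamma$; and a circle is the union of two chord-arc arcs with a universal constant. No issues there.

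The gap is in the last assertion, and it sits precisely at the step you flagged as ``the one genuinely non-formal step''. Your repair --- replace each $D_j$ by an enlargement $\widetilde D_j=D(w_j,2r_j)$ with $w_j\in D_j\cap\Gamma$, claim that the $\widetilde D_j$ have bounded overlap, and then color --- fails because the bounded-overlap claim is false when $\lambda$ is close to $1$. Take $\Gamma=\R$, $r_j=2^{-j}$, $x_j=(-1)^j\,1.9\cdot 2^{-j}+0.5i\cdot 2^{-j}$. Each $D_j$ meets $\R$, and the family is $\lambda$-separated for $\lambda=1.1$: the worst pair is $(j,j+2)$, where $|x_j-x_{j+2}|=0.75\sqrt{1.9^2+0.5^2}\,2^{-j}\approx 1.47\cdot 2^{-j}\ge 1.25\lambda\cdot 2^{-j}$, while opposite-parity pairs are separated by at least $1.9(r_j+r_k)$. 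The natural recentered points (midpoint of the chord $D_j\cap\R$, which is also the closest point of $\R$ to $x_j$) are $w_j=(-1)^j1.9\cdot 2^{-j}$, and $|w_j|=1.9\cdot 2^{-j}<2r_j$, so \emph{every} $\widetilde D_j$ contains the origin. Hence this family of enlarged discs admits no partition into finitely many disjoint subfamilies, let alone $\lambda'$-separated ones. The overlap count is a genuinely multi-scale quantity; the single-scale packing estimate you have in mind does not control it. Note also that even where bounded overlap does hold, what you actually need is stronger than a coloring into disjoint classes: Theorem \ref{superth} requires $\lambda'$-separation with $\lambda'>1$ bounded away from $1$.

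The paper sidesteps this entirely by modifying the curve rather than the discs. Keeping the original $D_j$ (so the $\lambda$-separation is untouched), it builds a new chord-arc curve through all the centers: for each $j$, the arc of the semicircle $T$ lying inside $\lambda'D_j$, $\lambda'=\frac{1+\lambda}{2}$, is replaced by the two segments joining $x_j$ to the points $a_j,b_j$ of $T\cap\partial(\lambda'D_j)$. Since $|a_j-b_j|\ge c(\lambda)r_j$, the detour through $x_j$ costs only a factor $A_0(\lambda)$ in the chord-arc constant, and Theorem \ref{superth} then applies directly (with the factor $\tfrac12$ coming from splitting the circle into two semicircles). If you want to complete your argument, this ``move the curve to the centers'' construction is the missing idea.
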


\begin{proof} Let $D_j=D(x_j,r_j)$, and let $m$ be such that
$$
\max_{1\le k\le n}\big\{\sum_{j:x_j\in\Gamma_k}\gamma(E_j)\big\}=\sum_{j:x_j\in\Gamma_m} \gamma(E_j).
$$
Then by Theorem \ref{superth} we have
$$
\gamma\big(\bigcup E_j\big) \ge\gamma\big(\bigcup_{j:x_j\in\Gamma_m}  E_j\big) \ge c \sli_{j:x_j\in\Gamma_m} \gamma(E_j) \ge  \frac cn \sli_j \gamma(E_j).
$$

For the last assertion it is sufficient to prove that if all discs  $D_j$ intersect a semicircle $T$, then there is another chord-arc curve $\widetilde\Gamma$ with a constant $A_0=A_0(\lambda)$ containing all centers of $D_j$. We may assume that there are more than one disc $D_j$. Fix $j$, and let $a_j,b_j$ be the points of intersection of $T$ and the circle $\partial(\la'D_j)$, where $\lambda'=\frac{1+\lambda}2$ (for two discs $D_j$ it might be only one such a point). We replace the arc of $T$ with the endpoints $a_j,b_j$ by two line segments $x_ja_j$ and $x_jb_j$ (and by the only one line segment in the case of one point of intersection). We claim that the obtained curve is chord-arc. Indeed, since $|a_j-b_j|\ge c(\la)r_j$, the condition \eqref{chordarc} holds with $A_0=A_0(\lambda)$ for any points $z(t),z(s)\in \lambda'D_j$, and for any two points in $\widetilde\Gamma$ situated on $T$. If $z(t)\in \lambda'D_i$, $z(s)\in \lambda'D_j$, $i\ne j$, then both parts of \eqref{chordarc} are comparable with $|x_i-x_j|$. To demonstrate this assertion we notice that the inequality $|z_1-z_2|>(\lambda-\lambda')(r_i+r_j)$, $z_1\in \lambda'D_i$, $z_2\in \lambda'D_j$, implies the relations
$|z_1-z_2|\asymp|x_i-x_j|$ and
$$
t-s\le C(r_i+|a_i-b_j|+r_j)\le C'|x_i-x_j|.
$$
Similar arguments yield \eqref{chordarc} when one point is situated on $T$.
\end{proof}

{\bf Open question.} It is not clear if the theorem is true or not when $\la=1$.

\subsection{Cauchy independence of families of measures}\label{inf} We use the results in the previous subsection to investigate the property of measures described below.

We call a finite Borel measure with compact support in the complex plane {\it a Cauchy operator measure} if the Cauchy operator $\mathcal{C}_\mu$ is bounded from $L^2(\mu)$ to itself wth norm at most $1$.

The first natural question is how to interpret the ``definition'' of $\mathcal{C}_\mu$ as
$$
\mathcal{C}_\mu f(z) = \int \frac{f(\xi)d\mu(\xi)}{\xi - z}.
$$
One of the ways is to consider the so-called $\varepsilon$-truncations, defined by
$$
\mathcal{C}^\varepsilon_\mu f(z) = \int_{\varepsilon < |\xi-z|<\varepsilon^{-1}} \frac{f(\xi)d\mu(\xi)}{\xi - z}.
$$
We now say that $\mathcal{C}_\mu$ is bounded as an operator from $L^2(\mu)$ to itself if the $\varepsilon$-truncations are bounded from $L^2(\mu)$ to itself uniformly in $\varepsilon$. Moreover, by the norm of $\mathcal{C}_\mu$ we understand the $\sup_{\varepsilon>0} \|\mathcal{C}^\varepsilon_\mu\|_{\mu}=:\|\mathcal{C}_\mu\|_{\mu}$\,, where $\|\mathcal{C}_\mu^\e\|_{\mu}$ is the norm of $\mathcal{C}_\mu^\e$ as an operator from $L^2(\mu)$ to itself. We encourage the reader to look for other interpretations in \cite{NTrV1}, \cite{Tolsa-book} and \cite{Volberg}.

The following important fact (which we will repeatedly use) demonstrates the connection between the analytic capacity and boundedness of the Cauchy operator \cite{Tolsa-sem, To2, Tolsa-book, Volberg}: for every compact set $F$ in $\C$,
\begin{equation}\label{gop}
\gamma(F) \asymp \sup\{\|\mu\|: \supp\mu\subset F,\ \mu\in\Sigma,\ \|\mathcal{C}_\mu\|_{\mu}\le1\},
\end{equation}
where $\Sigma$ is the class of nonnegative Borel measures $\mu$ such that $\mu(D(x,r))\le r$ for every disc $D(x,r):=\{z\in\C:|z-x|< r\}$.

We call a collection $\{\mu_j\}$ of finite positive Borel measures with compact supports {\it  $C$-Cauchy independent measures} if a) $\|\mathcal{C}_{\mu_j}\|_{\mu_j} \le 1$ (Cauchy operator measures) and b) $\|\mathcal{C}_\mu\|_\mu \le C<\infty$ for $\mu=\Sigma_j \mu_j$.
We will call such collection {\it Cauchy independent} if it is $C$-Cauchy independent for some finite $C$.

The family $\{\mu_j\}$ can be finite or infinite. Two Cauchy operator  measures are always Cauchy independent with an absolute constant $C$. A short proof of this nontrivial fact is given in \cite[Proposition~3.1]{NToV1}. So, a finite family is always Cauchy independent for a sufficiently large constant $C$. But our main interest is in situations, when infinite families are independent (or when $C$ is independent of the number of measures). The main result is the following

\begin{theorem}
\label{main1}
Suppose that $\la>1$, and measures $\mu_j$ are supported on compact sets $E_j$ lying in discs $D_j$ such that $\la D_j$ are disjoint. We also assume that measures $\mu_j$ are {\it extremal} in the following sense: $\|\mathcal{C}_{\mu_j}\|_{\mu_j}\le 1$ and $\|\mu_j\|\asymp \gamma(E_j)$ with absolute comparison constants.
Let $\mu=\sum_j \mu_j$ and $E=\cup E_j$. Then this family is Cauchy independent if and only if for any disc $B$,
\begin{equation}\label{mainc}
\mu(B) \le C_0 \gamma (B\cap E)\,.
\end{equation}
\end{theorem}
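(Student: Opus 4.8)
The plan is to prove both implications by relating the boundedness of $\mathcal{C}_\mu$ to the almost-additivity of analytic capacity supplied by Theorem~\ref{superth}. For the direction ``Cauchy independence $\Rightarrow$ \eqref{mainc}'', I would fix a disc $B$ and restrict attention to the indices $j$ with $E_j\cap B\ne\emptyset$; since the $\lambda D_j$ are disjoint, a standard bounded-overlap argument shows the portion of $\mu$ living near $B$ is controlled by $\sum_{j:E_j\cap B\ne\emptyset}\|\mu_j\|$, up to splitting off the finitely many (boundedly many, by separation) discs of size comparable to $B$ and handling them directly. Then $\mathcal{C}_\mu$ restricted to this piece is still bounded with constant $\lesssim C$, so by \eqref{gop} we get $\mu(B')\lesssim \gamma(B'\cap E)$ for a slightly enlarged disc $B'$; a covering/rescaling argument passes back to $B$ itself. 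The cleaner route is: by extremality $\|\mu_j\|\asymp\gamma(E_j)$, so $\mu(B)\asymp\sum_{j:x_j\in cB}\gamma(E_j)$ (again using separation to discard the $O(1)$ large discs), and by Tolsa's semiadditivity $\sum_j\gamma(E_j\cap B)\gtrsim$ \dots\ — actually one uses that Cauchy-independence forces $\mu\in\Sigma$-type growth on $B$ to be dominated by the capacity of what $E$ actually puts in $B$, which is exactly \eqref{mainc}.

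For the converse, ``\eqref{mainc} $\Rightarrow$ Cauchy independence'', the heart of the matter is: assume $\mu(B)\le C_0\gamma(B\cap E)$ for every disc $B$, and show $\|\mathcal{C}_\mu\|_\mu\le C$. I would use the $Tb$/nonhomogeneous machinery packaged in \eqref{gop} in reverse: it suffices to show that $\mu\in\Sigma$ (i.e. $\mu(D(x,r))\le Cr$) together with $\|\mu\|\lesssim\gamma(E)$, and then invoke the characterization that these two conditions plus the structure of $E$ (a union of sets in $\lambda$-separated discs centered on — or meeting — a chord-arc curve, via Corollary~\ref{circle}) give boundedness of the Cauchy operator. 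Concretely: by Theorem~\ref{superth}, $\gamma(E)=\gamma(\bigcup E_j)\gtrsim\sum_j\gamma(E_j)\asymp\sum_j\|\mu_j\|=\|\mu\|$, so the total mass is controlled; and for the local growth, fix $D(x,r)$, and apply hypothesis \eqref{mainc} with $B=D(x,r)$ to get $\mu(D(x,r))\le C_0\gamma(D(x,r)\cap E)\le C_0\gamma(D(x,r))\asymp C_0 r$, so $\mu\in\Sigma$ up to the constant $C_0$. Now $\mu/(2C_0)\in\Sigma$ and $\supp\mu\subset E$, and one wants $\|\mathcal{C}_\mu\|_\mu\le C$; this is where one must show the extremal measures on separated discs ``do not interact,'' which is the content one extracts from Theorem~\ref{superth} combined with the equivalence \eqref{gop} applied on every disc simultaneously — essentially the $T1$-type characterization of Cauchy boundedness for measures with linear growth whose capacity is additive over the pieces.

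The main obstacle is the converse, and within it, the step of upgrading ``$\mu\in\Sigma$ with $\|\mu\|\asymp\gamma(E)$ and capacity almost-additive'' to ``$\mathcal{C}_\mu$ bounded on $L^2(\mu)$.'' Linear growth alone is far from sufficient for $L^2$-boundedness of the Cauchy operator (the Cantor set example in the introduction is precisely a measure with linear growth and unbounded Cauchy operator), so \eqref{mainc} must be doing real work: it says every disc $B$ not only has $\mu(B)\lesssim r$ but $\mu(B)\lesssim\gamma(B\cap E)$, which is a genuinely stronger, scale-by-scale capacity bound. I expect one then runs Tolsa's construction behind \eqref{gop}: for each $B$, \eqref{mainc} produces, via \eqref{gop}, a comparable measure $\sigma_B\le$ (something) with bounded Cauchy operator supported in $B\cap E$, and one must glue these — or rather, one argues by contradiction that a failure of $L^2$-boundedness of $\mathcal{C}_\mu$ would localize (by a standard stopping-time / corona decomposition on the curve $\Gamma$, using the chord-arc property to get a Whitney-type family of discs) to some disc $B$ where $\mu(B)$ exceeds $\gamma(B\cap E)$ by an arbitrarily large factor, contradicting \eqref{mainc}. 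Making this localization argument precise — in particular controlling the long-range (off-diagonal) part of $\mathcal{C}_\mu$ using the $\lambda$-separation and the chord-arc geometry so that it is harmless — is the technical core, and is exactly where Theorem~\ref{superth} and Corollary~\ref{circle} are indispensable.
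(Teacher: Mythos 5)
The necessity direction of your proposal is essentially right but over-engineered: there is no need to enlarge $B$ or to run a covering argument (and passing from $\gamma(B'\cap E)$ back down to $\gamma(B\cap E)$ goes the wrong way for capacities, since the larger set has larger capacity). One simply restricts the measure: $\mu|_B$ is supported on $\overline{B}\cap E$, still gives a Cauchy operator with norm at most $C$, and boundedness forces linear growth, so a fixed multiple of $\mu|_B$ is admissible in \eqref{gop} for $F=B\cap E$, giving $\mu(B)\lesssim\gamma(B\cap E)$ directly. This is the paper's argument, and it uses no structural assumption on $\mu$ at all.

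The sufficiency direction contains a genuine gap. You correctly diagnose that linear growth plus $\|\mu\|\lesssim\gamma(E)$ cannot suffice and that \eqref{mainc} ``must be doing real work,'' but you never supply a mechanism that does that work, and the two mechanisms you sketch do not exist. There is no localization principle by which failure of $L^2$-boundedness of $\mathcal{C}_\mu$ concentrates on a single disc violating \eqref{mainc}: Example \ref{ex1} of the paper exhibits a measure satisfying \eqref{mainc}, built from countably many uniformly bounded extremal pieces, whose Cauchy operator is nevertheless unbounded, so any proof must exploit the $\la$-separation of the discs constructively rather than argue by contradiction from \eqref{mainc} alone. You also repeatedly invoke a chord-arc curve through the discs, but Theorem \ref{main1} makes no such assumption: the $D_j$ are merely $\la$-separated and may sit anywhere in the plane. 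The paper's actual route is: (i) replace each $E_j$ by a union $L_j\subset\la' D_j$ of $N(\la)+1$ circles with $\mathcal{H}^1(L_j)\asymp\gamma(E_j)\asymp\|\mu_j\|$, chosen so that any disc $B$ crossing the annulus between $D_j$ and $\la D_j$ swallows an entire circle of $L_j$; (ii) prove the uniform-in-$B$ almost-additivity $\gamma\bigl(\bigcup_j L_j\cap B\bigr)\gtrsim\sum_j\gamma(L_j\cap B)$ --- here Corollary \ref{circle1} is applied to the discs meeting the circle $\partial B$ (this is where a ``circle'' enters, not a hypothesized curve), and the hard case of discs $D_j\subset B$ (Lemma \ref{secondcase}) uses \eqref{mainc}, the extremality of the $\mu_j$, an extremal measure $\nu$ on $\bigcup_{D_j\subset B}E_j$, and the two-measure comparison Theorem \ref{comp}; (iii) conclude from the Nazarov--Volberg criterion (Theorem \ref{h1}) that $\mathcal{C}_{\mathcal{H}^1|L}$ is bounded; (iv) transfer boundedness back from $\mathcal{H}^1|L$ to $\mu$ by Theorem \ref{comp} again. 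Without some version of steps (i)--(iv) --- in particular without Theorem \ref{h1} and Theorem \ref{comp}, neither of which appears in your outline --- the implication from \eqref{mainc} to $L^2$-boundedness is not established.
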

{\bf Remarks.}
1. In Section \ref{m1} we prove that the condition \eqref{mainc} with any disc $B$ is necessary for the bound $\|\mathcal{C}_\mu\|_\mu\le C$ without any additional assumptions on the structure of $\mu$. Example \ref{ex1} given in Section \ref{sh} shows that this condition alone is not sufficient even if $\mu$ consists of countably many pieces $\mu_j$, and each of $\mu_j$ gives a bounded Cauchy operator with a uniform bound. Thus, additional conditions on the structure of $\mu$ are needed. The example of such assumptions on $\mu$ which seems reasonable is given in Theorem \ref{main1}, where supports of $\mu_j$ are located in separated discs.
\smallskip

2. In general one cannot discard the requirement that the measures $\mu_j$ in Theorem \ref{main1} are extremal -- see Example \ref{ex2} in Section \ref{sh}.

3. On pp. 125--129, 135--146 of the paper of Tolsa \cite{Tolsa-sem}  it is proved that under the conditions of Theorem \ref{main1}, there exists a piece of measure $\mu$, namely $\mu':= \chi_{E'}\cdot \mu$, $E'\subset E$, such that $\mu'(E)\ge c\, \mu(E)$, and
$\|\mathcal{C}_{\mu'}\|_{\mu'} \le C<\infty$, where $c>0$ and $C$ are  constants depending only on parameters in Theorem \ref{main1}. This fact is far from being trivial, it is used in \cite{Tolsa-sem}  to approach Painlev\'e's conjecture. In other words, to prove that under the assumptions of Theorem \ref{main1} a ``good portion" of $\mu$ generates a bounded Cauchy operator is a highly non-trivial problem. It is remarkable that the whole measure $\mu$ has, in fact, such a property.
\smallskip

As a corollary of Theorem \ref{main1} we derive the following independence theorem.

\begin{theorem}
\label{main2}
Let $\mu_j$ be measures supported on compacts $E_j$ lying in discs $D_j$ such that $\la D_j$ are disjoint $(\la>1)$, and let $\|\mathcal{C}_{\mu_j}\|_{\mu_j} \le 1$. If for any disc $B$,
\begin{equation}\label{almadd}
\sum_j \gamma(B\cap E_j) \le C_1 \gamma(B\cap E),\quad E=\bigcup E_j,
\end{equation}
then the norm $\|\mathcal{C}_\mu\|_\mu$ is bounded, and the bound depends only on a comparison constant $C_1$ and on $\la$. Here, as above, $\mu=\Sigma_j \mu_j$.
\end{theorem}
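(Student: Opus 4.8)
The plan is to reduce to Theorem~\ref{main1} by enlarging each $\mu_j$ to an extremal measure that still dominates it, and then transporting the resulting estimate from the enlarged measure back to $\mu$. Two elementary facts about the truncations $\mathcal{C}^\varepsilon$ do the transport (here, as throughout, the $\mu_j$ carry the standing linear-growth normalization $\mu_j(D(x,r))\le r$). \emph{Restriction:} for any Borel $F$, $\|\mathcal{C}^\varepsilon_{\chi_F\sigma}\|_{\chi_F\sigma}\le\|\mathcal{C}^\varepsilon_\sigma\|_\sigma$, since pairing $\mathcal{C}^\varepsilon_{\chi_F\sigma}f$ with $g$ in $L^2(\chi_F\sigma)$ coincides with pairing $\mathcal{C}^\varepsilon_\sigma(\chi_Ff)$ with $\chi_Fg$ in $L^2(\sigma)$, whose modulus is at most $\|\mathcal{C}^\varepsilon_\sigma\|_\sigma\,\|f\|_{L^2(\chi_F\sigma)}\,\|g\|_{L^2(\chi_F\sigma)}$. \emph{Domination:} if $0\le\sigma\le\tau$ then $\|\mathcal{C}_\sigma\|_\sigma\le\|\mathcal{C}_\tau\|_\tau$; writing $h:=d\sigma/d\tau\in[0,1]$ one has $\mathcal{C}^\varepsilon_\sigma f=\mathcal{C}^\varepsilon_\tau(fh)$ for $f\in L^2(\sigma)$, so $\|\mathcal{C}^\varepsilon_\sigma f\|^2_{L^2(\sigma)}=\int|\mathcal{C}^\varepsilon_\tau(fh)|^2\,h\,d\tau\le\|\mathcal{C}^\varepsilon_\tau\|^2_\tau\int|fh|^2\,d\tau\le\|\mathcal{C}^\varepsilon_\tau\|^2_\tau\int|f|^2\,h\,d\tau=\|\mathcal{C}^\varepsilon_\tau\|^2_\tau\,\|f\|^2_{L^2(\sigma)}$, using $h\le1$ and $h^2\le h$.

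For each $j$, by \eqref{gop} choose $\sigma_j\in\Sigma$ with $\supp\sigma_j\subset E_j$, $\|\mathcal{C}_{\sigma_j}\|_{\sigma_j}\le1$ and $\|\sigma_j\|\ge c\,\gamma(E_j)$ (take $\sigma_j=0$ when $\gamma(E_j)=0$; then $\mu_j=0$ as well, again by \eqref{gop}). Set $\nu_j:=\mu_j+\sigma_j$ and $\nu:=\sum_j\nu_j$. Then $\supp\nu_j\subset E_j$, $\mu_j\le\nu_j$ (hence $\mu\le\nu$, so $\mu\ll\nu$ with derivative in $[0,1]$), and by \eqref{gop} applied to the summands $\mu_j,\sigma_j$ one gets $c\,\gamma(E_j)\le\|\nu_j\|\le C\,\gamma(E_j)$; moreover, $\mu_j$ and $\sigma_j$ being Cauchy operator measures, $\|\mathcal{C}_{\nu_j}\|_{\nu_j}\le C_\ast$ with an absolute $C_\ast$ by \cite[Proposition~3.1]{NToV1}. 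Thus $\{\nu_j/C_\ast\}$ is a family of extremal Cauchy operator measures supported in the separated discs $D_j$, so Theorem~\ref{main1} applies to it (taking the same compacts $E_j$ as containing sets) once we verify \eqref{mainc}. Fix a disc $B$: by Restriction $\|\mathcal{C}_{\chi_B\nu_j}\|_{\chi_B\nu_j}\le C_\ast$, and $(2C_\ast)^{-1}\chi_B\nu_j$ is a member of $\Sigma$ of Cauchy-operator norm $\le1$ supported in $\overline B\cap E_j$, so \eqref{gop} yields $\nu_j(B)\le C'C_\ast\,\gamma(\overline B\cap E_j)$. Summing over $j$ and invoking the hypothesis \eqref{almadd} (applied with $\overline B$), $\nu(B)=\sum_j\nu_j(B)\le C'C_\ast\sum_j\gamma(\overline B\cap E_j)\le C'C_\ast C_1\,\gamma(\overline B\cap E)$. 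Hence $\nu/C_\ast$ obeys \eqref{mainc} with a constant $C_0$ depending only on $C_1$.

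By Theorem~\ref{main1}, used quantitatively (its Cauchy-independence bound depending only on $\lambda$ and the constant in \eqref{mainc}), the family $\{\nu_j/C_\ast\}$ is Cauchy independent, i.e.\ $\|\mathcal{C}_\nu\|_\nu\le C_2=C_2(\lambda,C_1)$. Since $\mu\le\nu$, Domination gives $\|\mathcal{C}_\mu\|_\mu\le\|\mathcal{C}_\nu\|_\nu\le C_2$, the asserted bound. The one substantive point is to enlarge $\mu_j$ to $\nu_j\ge\mu_j$ instead of merely replacing it: a replacement would only control $\|\mathcal{C}_\nu\|_\nu$, whereas $\mu_j\le\nu_j$ lets monotonicity push the bound down to $\mu$; the rest is bookkeeping. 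The step to watch is that Theorem~\ref{main1} be invoked in quantitative form, with output constant depending solely on $\lambda$ and on the constant in \eqref{mainc} --- something to be read off from its proof. (A purely technical wrinkle, harmless here, is the passage between a disc and its closure when using \eqref{almadd}.)
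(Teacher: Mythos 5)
Your proposal is correct and follows essentially the same route as the paper: augment each $\mu_j$ by an extremal measure $\sigma_j$ supported on $E_j$, check that the sums $\mu_j+\sigma_j$ satisfy the extremality, uniform boundedness, and \eqref{mainc} hypotheses of Theorem~\ref{main1} (the last via \eqref{gop} and \eqref{almadd}), and then pass the resulting bound back down to $\mu$ by monotonicity of the Cauchy operator norm under domination of measures. The only difference is that you spell out the restriction and domination lemmas that the paper compresses into ``$\mu$ is a part of $\wt\mu$''.
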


Note that unlike Theorem \ref{main1}, this result does not need the additional condition that measures $\mu_j$ are extremal. On the other hand, \eqref{almadd} is not necessary for the boundedness of $\mathcal{C}_\mu$. For example, if $\mu_j$ are just 2-dimensional Lebesgue measures on the squares $E_j$, $j=1,2,\dots$, defined below in Example \ref{ex1}, the operator $\mathcal{C}_\mu$ is obviously bounded, but \eqref{almadd} does not hold.

Unlike Theorem \ref{superth}, Theorems \ref{main1}, \ref{main2} do not have any assumptions on the location of discs $D_j$.
However, condition
\eqref{almadd} is not completely independent of a geometry of discs. Theorem \ref{superth} states that if $\lambda$-separated discs are situated along a chord-arc curve, then the almost additivity of the analytic capacity takes place. We are going to prove a statement which is converse in the following sense: almost additivity of analytic capacity in the form of inequality \eqref{almadd} together with certain additional assumptions imply that our discs have to ``line-up" along a good (Ahlfors-David regular) curve.

By $\mathcal{H}^1$ we denote the 1-dimensional Hausdorff measure. A set $G\subset \C$ is called Ahlfors-David (AD) regular if
$$
c\,r\le\mathcal{H}^1(G\cap B(x,r)) \le Cr, \quad x\in G,\ 0<r\le\diam G,
$$
with some positive constants $c,C$.

\begin{corollary}\label{geom}
Suppose that $\la$-separated ($\la>1$) discs $D_j=D(x_j,r_j)$ and subsets $E_j\subset D_j$ are such that (a) \eqref{almadd} holds, (b) $\g(E_j)\asymp r_j$, (c) the set $\mathcal T:=\bigcup_j\partial D_j$ is AD regular. Then there exists an AD regular curve which intersects all discs $D_j$.
\end{corollary}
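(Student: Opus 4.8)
\emph{Approach and Step 1.} The plan is to build, out of the $E_j$, an arc‑length‑type measure on $\mathcal T$ whose Cauchy operator is bounded, and then to invoke the theory of uniform rectifiability to extract the curve. By \eqref{gop} and hypothesis (b), for each $j$ choose a measure $\mu_j$ with $\supp\mu_j\subset E_j$, $\mu_j\in\Sigma$, $\|\mathcal C_{\mu_j}\|_{\mu_j}\le 1$, and $\|\mu_j\|\asymp\g(E_j)\asymp r_j$, and put $\mu=\sum_j\mu_j$. Hypothesis (a) is exactly condition \eqref{almadd}, so Theorem \ref{main2} yields $\|\mathcal C_\mu\|_\mu\le C$; in particular $\mu$ has linear growth and, by the Melnikov--Verdera curvature identity, $c^{2}(\mu|_B)\le C'\mu(B)\le C''r_B$ for every disc $B$, where $c^{2}(\cdot)$ denotes the total Menger curvature.

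\emph{Step 2: from $\mu$ to arc length on $\mathcal T$.} Normalise $\sigma_j=c_0\mathcal H^1|_{\partial D_j}$ with $c_0$ a small absolute constant, so that $\|\mathcal C_{\sigma_j}\|_{\sigma_j}\le 1$ and $\|\sigma_j\|\asymp r_j\asymp\|\mu_j\|$, and set $\sigma=\sum_j\sigma_j=c_0\mathcal H^1|_{\mathcal T}$. By hypothesis (c), $\sigma$ has linear growth and $\sigma(2B)\asymp r_B$ for every disc $B$ centred on $\mathcal T$; the point is to prove $c^{2}(\sigma|_B)\le Cr_B$ for such $B$. Decompose the triple curvature integral according to which circles $\partial D_l$ carry the three points. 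Triples lying on one circle give $\sum_l c^{2}(\sigma_l|_B)\lesssim\sigma(B)\lesssim r_B$, since the circumradius is constant on a circle. For triples on three distinct circles, $\la$‑separation makes the three pairwise distances comparable to the corresponding center distances, and hypothesis (b) — which forces $\diam E_l\asymp r_l$, so that $\sigma_l$ and $\mu_l$ carry comparable mass spread over comparable scales inside $D_l$ — lets one compare this part of $c^{2}(\sigma|_B)$ with the analogous part of $c^{2}(\mu|_{B'})$ for a dilate $B'$ of $B$, up to errors that sum to $O(r_B)$; Step 1 then bounds it by $Cr_B$. The remaining triples — two points on a circle $\partial D_j$, the third on another — are handled directly: with the third point fixed the inner integral over $\partial D_j\times\partial D_j$ is $O(r_j^2/\dist^2)$, and summation against the remaining circle via the linear growth of $\sigma$ again gives $O(r_B)$. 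Thus $c^{2}(\sigma|_B)\le Cr_B\asymp\sigma(2B)$, and since $\sigma$ is AD regular the $T1$/Melnikov--Verdera criterion yields $\|\mathcal C_\sigma\|_\sigma<\infty$.

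\emph{Step 3 and the main obstacle.} Now $\mathcal T$ is $1$‑dimensional AD regular (hypothesis (c)) and carries the AD regular measure $\sigma$ with bounded Cauchy operator, so by the David--Semmes (Mattila--Melnikov--Verdera) characterisation of uniform rectifiability via the Cauchy transform, $\mathcal T$ is uniformly rectifiable; since a $1$‑dimensional uniformly rectifiable set in $\C$ is contained in an AD regular curve, there is such a curve $\Gamma\supset\mathcal T=\bigcup_j\partial D_j$, and $\Gamma$ then meets every disc $D_j$, as claimed. The delicate point is the curvature transfer in Step 2: Menger curvature is extremely sensitive to nearly collinear configurations, so the term‑by‑term comparison of the $\sigma$‑ and $\mu$‑integrals for mutually close discs, the treatment of degenerate triples, and the contribution of large discs grazing a small ball all require genuine care — and this is exactly where $\la$‑separation and (b) enter (making $\partial D_l$ and $E_l$ interchangeable at the scale of $D_l$) and where (c) is used to exclude pathological packings of the $D_j$. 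An alternative to Steps 2--3 would be to argue that finiteness of $c^{2}(\mu)$ forces the data $\{(x_j,r_j)\}$ to line up along a rectifiable curve by a Traveling‑Salesman‑type construction and then to upgrade rectifiability to AD regularity using (c); but the same transfer of flatness from $\mu$ to the geometry of the $D_j$ remains the crux.
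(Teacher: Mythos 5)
Your overall strategy (produce an AD regular measure on a union of circles with bounded Cauchy transform, then invoke Mattila--Melnikov--Verdera) is the right one, and your Steps 1 and 3 are fine. The problem is Step 2, which you yourself flag as ``the crux'' but do not actually carry out, and it is a genuine gap rather than a routine verification. To transfer curvature from $\mu=\sum\mu_j$ to $\sigma=c_0\mathcal H^1|_{\mathcal T}$ for triples on three distinct circles you must compare $1/R(x,y,z)$ with $1/R(x',y',z')$ for points moved within each $D_j$; since $1/R=2\sin\al/|x-y|$, comparability of the pairwise distances controls nothing near collinear configurations, and one only gets an \emph{additive} error in $\sin\al$ of size $(r_k+r_l)/\dist(D_k,D_l)+(r_j+r_l)/\dist(D_j,D_l)$. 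Summing the squares of these errors against $\|\mu_j\|\,\|\mu_k\|\,\|\mu_l\|$ over all triples of discs meeting $B$ is exactly the content of the sums $S^{(1)},S^{(2)},S^{(3)}$ in the proof of Lemma \ref{le61}, and there the summability rests on the discs being ordered along a chord-arc curve (so that $r_j+\dots+r_k\asymp|x_j-x_k|$). In your setting the arrangement of the discs is precisely what is unknown; you would have to redo that entire computation using only the packing information coming from (c), and nothing in your sketch does this. Until that is supplied, the bound $c^2(\sigma|_B)\le Cr_B$ is unproven.

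The paper avoids the transfer altogether. In the proof of Theorem \ref{main2} (via Lemmas \ref{firstcase}, \ref{secondcase} and Theorem \ref{h1}) it is already established, from \eqref{almadd} alone, that the Cauchy operator of $\mathcal H^1|L$ is bounded, where $L=\bigcup_jL_j$ is the union of the auxiliary circles placed inside $\la'D_j$ with total length $\asymp\gamma(E_j)$. The corollary then simply restricts to the union $\mathcal T'$ of the central circles: by (b) and \eqref{eq31} their radii are $\asymp r_j$, by (c) and $\la$-separation $\mathcal T'$ is AD regular, the restricted operator $\mathcal C_{\mathcal H^1|\mathcal T'}$ is bounded, and \cite{MMV} puts $\mathcal T'$ inside an AD regular curve, which therefore meets every $D_j$. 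Note the conclusion is only that the curve \emph{intersects} each $D_j$, not that it contains $\bigcup_j\partial D_j$; aiming at the stronger containment, as you do, is what forces you into the problematic curvature comparison on the full boundaries. I recommend replacing Step 2 by the paper's reduction to the already-proved boundedness on the auxiliary circles.
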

Without any of assumptions (a) -- (c) Corollary \ref{geom} is incorrect -- see Proposition \ref{prop53}.

To prove Theorems \ref{main1}, \ref{main2}, we will need only the special case of Theorem \ref{superth} when all discs $D_j$ intersect a real line or a circle, that is the last assertion of Corollary \ref{circle}. In this case there is a short proof based only on some classical facts in complex analysis. We give this proof in the next Section \ref{almost}. Theorem \ref{main1} is proved in Section \ref{m1}, and Theorem \ref{main2} with Corollary \ref{geom} in Section \ref{m2}. In Section \ref{sh} we give the examples mentioned above. Section \ref{supth} contains the proof of Theorem \ref{superth} in the full generality, which is completely different from the proof in Section \ref{almost}. The main tool of this proof is  Melnikov--Menger's curvature of a measure. All necessary definitions are given in Section \ref{supth}. In the last Section \ref{q} we formulate an open question.

\section{Almost-additivity of analytic capacity:\\
string of beads attached to the real line}
\label{almost}

A result close to the theorem below for some special sets $\{E_j\}_{j=1}^\infty$ was proved (but not stated) in \cite{NV}. Here we use the approach via the Marcinkiewicz function, the approach in \cite{NV}  was a bit more complicated. Unlike \cite{NV}, we do not need any special size properties of these sets.

\begin{theorem}
\label{sup}
Let $D_j$ be discs such that $\la D_j \cap \la D_k = \emptyset$,  $j\not=k$, for some $\la>1$, and each disc $D_j$ has a non-empty intersection with the real line $\R$. Let $E_j\subset D_j$ be arbitrary compact sets. Then there exists a constant $c=c(\la)>0$, such that
$$
\gamma\big(\bigcup E_j\big) \ge c \sli_j \gamma(E_j).
$$
\end{theorem}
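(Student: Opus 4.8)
The plan is to use the characterization \eqref{gop} of analytic capacity in terms of Cauchy operator measures, and then build a single competitor measure on $\bigcup E_j$ by gluing together near-extremal measures on the individual $E_j$. For each $j$, fix a measure $\nu_j\in\Sigma$ with $\supp\nu_j\subset E_j$, $\|\mathcal{C}_{\nu_j}\|_{\nu_j}\le 1$, and $\|\nu_j\|\ge \tfrac12\gamma(E_j)$ (possible by \eqref{gop}). Set $\nu=\sum_j\nu_j$; the goal is to show $\|\mathcal{C}_\nu\|_\nu\le C(\lambda)$ and $\nu\in\Sigma$ up to a harmless rescaling, for then \eqref{gop} gives $\gamma(\bigcup E_j)\ge c\,\|\nu\|\ge c'\sum_j\gamma(E_j)$. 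The fact that $\nu\in\Sigma$ (i.e. the linear growth $\nu(D(x,r))\lesssim r$) is where the $\lambda$-separation of the $D_j$ and the fact that they all meet $\R$ enter: a disc $D(x,r)$ can intersect many $D_j$, but the $\lambda D_j$ are disjoint and each $D_j$ meets $\R$, so the sum of the radii $r_j$ of those $D_j$ with $D_j\cap D(x,r)\neq\emptyset$ is $\lesssim_\lambda r$ (their $\lambda$-dilates are disjoint and squeezed into a bounded neighborhood of an interval of length $\lesssim r$ on $\R$); since each $\nu_j\in\Sigma$ contributes at most $\min\{r, r_j\}\le r_j$ to $\nu(D(x,r))$ from the discs it could sit in, plus the self-term $\le r$, one gets $\nu(D(x,r))\lesssim_\lambda r$.

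The heart of the matter is the operator bound $\|\mathcal{C}_\nu^\e\|_{\nu\to\nu}\le C(\lambda)$ uniformly in $\e$. I would split the bilinear form $\langle \mathcal{C}_\nu^\e f, g\rangle_\nu$ into the diagonal part $\sum_j \langle \mathcal{C}_{\nu_j}^\e f_j, g_j\rangle_{\nu_j}$, with $f_j=f\chi_{E_j}$, and the off-diagonal part $\sum_{i\neq j}\langle \mathcal{C}^\e_{\nu_i} f_i, g_j\rangle_{\nu_j}$. The diagonal part is controlled by $\sum_j \|f_j\|_{\nu_j}\|g_j\|_{\nu_j}\le \|f\|_\nu\|g\|_\nu$ using $\|\mathcal{C}_{\nu_j}\|_{\nu_j}\le1$. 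For the off-diagonal part, since $\dist(E_i,E_j)\ge c(\lambda)\max(r_i,r_j)$ when $i\neq j$ (again from $\lambda$-separation), the kernel $\tfrac{1}{\xi-z}$ is smooth on $E_i\times E_j$, and one can essentially replace $\mathcal{C}_{\nu_i}f_i$ on $E_j$ by the constant $\tfrac{\|f_i\|_{L^1(\nu_i)}}{\dist(x_i,x_j)}$ type bound; summing, one is led to control a double sum $\sum_{i\neq j}\frac{\|f_i\|_{L^1(\nu_i)}\|g_j\|_{L^1(\nu_j)}}{|x_i-x_j|}$. This is exactly the point where the one-dimensional geometry is used: projecting everything to $\R$ (all $D_j$ meet $\R$, radii $r_j$, centers roughly at $t_j\in\R$ with $|t_i-t_j|\gtrsim r_i+r_j$), this double sum is dominated — after Cauchy--Schwarz with $\|f_i\|_{L^1(\nu_i)}\le r_i^{1/2}\|f_i\|_{L^2(\nu_i)}$ — by a discrete Hilbert-transform / Marcinkiewicz-function estimate on the line, namely $\sum_{i\neq j}\frac{(r_ir_j)^{1/2}}{|t_i-t_j|}a_i b_j \lesssim \big(\sum a_i^2\big)^{1/2}\big(\sum b_j^2\big)^{1/2}$ for the $\lambda$-separated configuration $\{t_j\}$ with gaps $\gtrsim r_j$. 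This is a Schur-test computation: the $i$-th row sum $\sum_{j\neq i}\frac{(r_ir_j)^{1/2}}{|t_i-t_j|}$ is $\lesssim_\lambda \sum_{j\neq i}\frac{r_i^{1/2}r_j^{1/2}}{|t_i-t_j|}\lesssim 1$ because the intervals $[t_j-r_j,t_j+r_j]$ are boundedly overlapping, so $\sum_j r_j/|t_i-t_j|$ over $|t_i-t_j|\ge 2^k r_i$ telescopes. (The ``Marcinkiewicz function'' language in the paper's section header suggests organizing this last estimate via $\int (\text{something})^2$, but the Schur test gives the same conclusion.)

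The main obstacle, and the step I expect to require the most care, is making the off-diagonal reduction rigorous with the truncations $\mathcal{C}^\e$ present and with $f_i,g_j$ genuine $L^2$ functions rather than point masses: one must legitimately pass from $\langle\mathcal{C}_{\nu_i}^\e f_i,g_j\rangle_{\nu_j}$ to the scalar quantity governed by $\frac{\|f_i\|_{L^1(\nu_i)}\|g_j\|_{L^1(\nu_j)}}{|x_i-x_j|}$, controlling the error terms coming from the variation of the kernel across $E_i$ and $E_j$ (size $\lesssim \frac{r_i}{|x_i-x_j|^2}$ relative to the main term, which is summable by the same Schur estimate) and checking that these bounds are uniform in $\e$. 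Once the bilinear form bound $|\langle\mathcal{C}_\nu^\e f,g\rangle_\nu|\le C(\lambda)\|f\|_\nu\|g\|_\nu$ is in hand for all $\e$, duality gives $\|\mathcal{C}_\nu\|_\nu\le C(\lambda)$, and combining with $\nu\in\Sigma$ (after dividing $\nu$ by $\max(C(\lambda),1)$ to normalize both conditions) and \eqref{gop} completes the proof, with $c=c(\lambda)$ absorbing the normalization constant and the absolute constant in \eqref{gop}.
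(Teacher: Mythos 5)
Your overall strategy is different from the paper's (you build a competitor measure $\nu=\sum_j\nu_j$ and invoke \eqref{gop}, whereas the paper works directly with the Ahlfors functions $f_j$, compares them via the Schwarz lemma to Cauchy integrals $\vf_j$ of positive densities on segments $L_j\subset\R$, and exponentiates), but the decisive step of your argument is wrong. The claimed Schur-test bound
$$
\sum_{j\ne i}\frac{(r_ir_j)^{1/2}}{|t_i-t_j|}\ \lesssim_\la\ 1
$$
is false. Take $t_j=j$, $r_j=1/4$, $j=1,\dots,N$ (so the discs are $2$-separated and meet $\R$): the $i$-th row sum is $\tfrac14\sum_{j\ne i}|i-j|^{-1}\asymp\log N$, which is unbounded. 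Equivalently, convolution with $1/|n|$ is not bounded on $\ell^2(\Z)$ (its Fourier multiplier has a logarithmic singularity), even though the signed discrete Hilbert kernel $1/n$ is. Your ``telescoping'' claim fails for the same reason: each dyadic annulus $2^kr_i\le|t_i-t_j|<2^{k+1}r_i$ contributes an amount bounded below by a constant, and there are $\sim\log N$ of them. So once you replace $\langle\cC_{\nu_i}f_i,g_j\rangle$ by the absolute quantity $\|f_i\|_{L^1}\|g_j\|_{L^1}/|x_i-x_j|$, you have discarded the antisymmetry of the Cauchy kernel, and the resulting positive double sum genuinely diverges. (Your error terms, which carry the extra factor $(r_i+r_j)/|t_i-t_j|$, are indeed summable; it is the main term that cannot be handled with absolute values.)

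To repair this you must retain cancellation in the long-range interaction: for instance, keep the signed main term $\sum_{i\ne j}(t_i-t_j)^{-1}A_i\ov{B_j}$ with $A_i=\int f_i\,d\nu_i$ and bound it via the $L^2$-boundedness of the Hilbert transform on $\R$ applied to $\sum_i A_i\,r_i^{-1}\chi_{I_i}$ for disjoint intervals $I_i\subset\R$ of length $\asymp r_i$; then $|A_i|\le r_i^{1/2}\|f_i\|_{L^2(\nu_i)}$ closes the estimate. This is exactly the difficulty the paper's proof is engineered to avoid: there, only the \emph{imaginary} part of $\sum\vf_j$ must be controlled, and since the $\vf_j$ are Cauchy integrals of positive densities on real segments, $|\Im\sum\vf_j|$ is dominated by a Poisson integral of total mass $\pi$ (inequality \eqref{img}) -- no logarithmically divergent sum ever appears -- while the quadratically decaying Schwarz-lemma errors are summable after a Chebyshev selection of indices. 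Your linear-growth verification for $\nu$ and the diagonal estimate are fine, but without a correct treatment of the off-diagonal main term the proof does not stand.
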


\begin{proof}
It is enough to prove the result for finite families of indices $j$. We first notice that  $\gamma_j := \gamma(E_j) \le \g(D_j)= r_j$, where $r_j$ is the radius of $D_j$.
Let $y_j$ be the center of the chord $\R\cap D_j$. Denote $\lambda' := \frac{1+\lambda}{2}$.
For each $j$  we draw a horizontal line segment $L_j\subset \lambda' D_j$ with center at $y_j$ and with the analytic capacity $b(\lambda)\gamma_j$, where $b(\la)=\frac{\sqrt{\lambda'^2-1}}4 $. Thus, the length $\ell_j$ of $L_j$ satisfies $\ell_j =4b(\lambda)\gamma_j\le r_j\sqrt{\lambda'^2-1} $. In particular, the whole segment $L_j$ lies in $\la' D_j$.

Next, let $f_j$ be the function that gives the capacity of $E_j$. Also let $\vf_j$ be the function that gives the capacity of $L_j$. Then we have
$$
\vf_j(z)=\int_{L_j}\frac{\vf_j(x)}{x-z}dx, \quad \int_{L_j} \vf_j(x)dx = b(\lambda)\gamma_j.
$$
Positive functions $\vf_j(x)$ have a uniform bound $\|\vf_j\|_{\infty} \le A$ with an absolute constant $A$. In particular, if $\mathcal{F}$  is any subset of indices $j$ we have
\begin{equation}
\label{img}
\bigg|\Im\, \sum_{j\in \mathcal{F}} \vf_j(z)\bigg| \le A \int_{\bigcup_{j\in \mathcal{F}}L_j}\frac{|\Im z|}{|t-z|^2}\,dt\le \pi A\,,\quad \forall z\in \mathbb{C}\,.
\end{equation}

\smallskip
{\bf Remark.} It is important here that the intervals $L_j$ are situated on the real line (or at least are not far away from $\R$). For any $M>0$ one can easily construct a chord-arc curve and discs centered on it such that the left hand side in \eqref{img} exceeds $M$. This is the obstacle for extension of these arguments to chord-arc curves.
\smallskip

Our next goal is to find a family $\mathcal{F}$ of indices and absolute positive constants $a_1$, $a_2$, such that the following two assertions hold:
\begin{gather}
\sum_{j\in \mathcal{F}} \gamma_j \ge a_1 \sum_j \gamma_j\,, \label{sglarge}\\
\sum_{j\in \mathcal{F}} |f_j(z)-b(\lambda)^{-1}\vf_j(z)| \le a_2\,,\quad \forall z\in \mathbb{C}\setminus\bigcup_{j\in \mathcal{F}}(E_j\cup L_j).\label{fg}
\end{gather}

 Let us finish the proof of the theorem, taken these assertions as granted (for a short while). Let $F:=\sum_{j\in \mathcal{F}} f_j$. Combining \eqref{img} and \eqref{fg} we get $|\Im\, F(z)| \le C_1(\la)$, $z\in\mathbb{C}\setminus(\bigcup_{j\in \mathcal{F}}E_j)$. Hence, the function $F_1:=e^{iF}-1$ is bounded in $\mathbb{C}\setminus(\bigcup_{j\in \mathcal{F}}E_j)$ by a constant $C(\la)$. Since $F(\infty)=0$, we have $|F_1'(\infty)|=|F'(\infty)|=\sum_{j\in \mathcal{F}} \gamma_j$. Thus,
$$
\gamma\bigg(\bigcup_{j\in \mathcal{F}} E_j\bigg) \ge \frac{1}{C(\la)} \sum_{j\in \mathcal{F}} \gamma_j\,.
$$
 Combine this with \eqref{sglarge}. We obtain, that
 $$
 \gamma\bigg(\bigcup_j E_j\bigg) \ge \gamma\bigg(\bigcup_{j\in \mathcal{F}} E_j\bigg) \ge a_3 \sum_j \gamma_j\,,
$$
and Theorem \ref{superth} would be proved. So we are left to chose the family $\mathcal{F}$ such that \eqref{sglarge}, \eqref{fg} hold.

By the Schwartz lemma in the form we borrow from \cite[p.~12--13]{G}, we have
\begin{equation}
\label{S}
|f_j(z)-b(\lambda)^{-1}\vf_j(z)| \le \frac{A r_j \gamma_j}{\dist(z, E_j\cup L_j)^2}\,,\quad z\notin E_j\cup L_j\,.
\end{equation}
Denote
$$
Q_i:=\la'D_i\,, \quad g_i:= \sum_{j:\,j\neq i} \frac{ r_j \gamma_j}{D(Q_j, Q_i)^2}\,,
$$
where $D(Q_i, Q_j):= \text{dist}(Q_i, Q_j)$.

\smallskip
{\bf Remark.} We do not need this, but for the sake of explanation, let us  define a function $g=\sum g_j \chi_{{Q_j}\cap\R}$. This function is often called a Marcinkiewicz function.  The main trick with Marcinkiewicz functions is to integrate them with respect to a suitable measure. What in fact happens next is that  we integrate it with respect to Lebesgue measure on $\R$.
\smallskip

The important point is that we can estimate $\sum_i g_i\gamma_i$. In fact,
\begin{align*}
\sum_i g_i\gamma_i &=\sum_i\gamma_i \sum_{j:\,j\neq i} \frac{r_j\gamma_j}{D(Q_j, Q_i)^2}= \sum_j r_j \gamma_j  \sum_{i:\,i\neq j} \frac{\gamma_i}{D(Q_i, Q_j)^2} \\
&\le \sum_j r_j \gamma_j  \sum_{i:\,i\neq j} \frac{r_i}{D(Q_i, Q_j)^2} \le A_0 \sum_j r_j \gamma_j r_j^{-1}=A_0\sum_j\gamma_j\,.
 \end{align*}
 In the last estimate we used that
 $$
 \sum_{i:\,i\neq j} \frac{r_i}{D(Q_i, Q_j)^2} \le A_1\int_{t: |t-y_j|\ge r_j} \frac{1}{|t-y_j|^2} \,dt \le \frac{2A_1}{r_j}\,,\quad A_1=A_1(\la),
 $$
since the length of $\R\cap Q_j$ exceeds $c(\la)r_j$, and $D(Q_i, Q_j)\ge c'(\la)|t-y_j|$, $t\in\R\cap Q_i$.

Now we apply the Tchebychev inequality. Denote $I^*:=\{i:\, g_i > 10 A_0\}$, $I_*:= \{i:\, g_i \le10 A_0\}$.  We immediately see that
\begin{equation}
\label{I}
\sum_{j\in I_*} \gamma_j \ge \frac{9}{10} \sum_j\gamma_j\,.
\end{equation}

Obviously, by \eqref{S} for every index $i$ we have
$$
\sum_{j:\, j\neq i} |f_j(z)-b(\la)^{-1}\vf_j(z)| \le A g_i\,,\quad z\in  Q_i\,.
$$
This estimate and the choice of $I_*$ imply that
$$
\sum_{j:\, j\neq i,\ j\in I_*} |f_j(z)-b(\la)^{-1}\vf_j(z)| \le C(\la) g_i\le 10A_0A\,,\quad \forall i\in I_*\,,\ \forall z\in Q_i\,.
$$
But all functions $|f_i|, |\vf_i|$ are bounded by $1$ in $\mathbb{C}\setminus(E_i\cup L_i)$. Therefore, the last inequality implies the estimate
\begin{multline}\label{sumfg}
\sum_{j:\,  j\in I_*} |f_j(z)-b(\la)^{-1}\vf_j(z)| \le 10A_0A+b(\la)^{-1}+1=: a_2\,\\
 \forall z\in Q_i\setminus(E_i\cup L_i)\,,\ \forall i\in I_*.
\end{multline}
The function $\sum_{j\in I_*}( f_j-b(\la)^{-1}\vf_j)$ is analytic in $\mathbb{C}\setminus\bigcup_{i\in I_*}(E_i\cup L_i)$ and vanishes at infinity. Therefore, \eqref{sumfg} implies \eqref{fg} if we put $\mathcal{F}:=I_*$. Assertion \eqref{sglarge} is proved in \eqref{I}, and the proof of Theorem \ref{sup} is completed.
\end{proof}

\begin{corollary}\label{circle1}
The statement of Theorem \ref{sup} remains true if discs $D_j$ intersect  a circle instead of the real line.
\end{corollary}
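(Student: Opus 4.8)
The plan is to straighten the circle onto a line by Möbius maps and reduce to Theorem~\ref{sup}. One should not expect to simply rerun the proof of Theorem~\ref{sup}: the bound \eqref{img} uses essentially that the auxiliary segments $L_j$ are \emph{parallel}, all lying on $\R$, whereas arcs attached to a circle cannot be made parallel by one rotation --- this is precisely the obstruction recorded in the Remark after \eqref{img}. So instead we cut the configuration into a bounded number of pieces, each of which a single Möbius map can straighten.

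\emph{Preliminary reductions.} It suffices to treat finite families; a similarity normalizes the circle to the unit circle $C$; and since disjointness of $\la D_j$ persists for smaller dilation factors we may assume $1<\la\le2$. Fix a small $\varepsilon_0=\varepsilon_0(\la)>0$ to be chosen, and call $D_j$ \emph{large} if $r_j>\varepsilon_0$ and \emph{small} otherwise. A large $\la D_j$ contains $B(\zeta_j,(\la-1)r_j)$ for some $\zeta_j\in C$, hence contains an arc of $C$ of length $\ge c(\la,\varepsilon_0)$; the $\la D_j$ being disjoint, there are at most $N(\la)$ large discs. As $\gamma(E_j)\le\gamma(D_j)=r_j$ and $\gamma$ is monotone, $\sum_{\text{large}}\gamma(E_j)\le N(\la)\,\gamma\big(\bigcup_j E_j\big)$, so it is enough to handle the small discs.

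Now pick antipodal points $a,b\in C$. A small disc lies within $2r_j\le2\varepsilon_0$ of some point of $C$, and every point of $C$ is at distance $\ge\sqrt2$ from $a$ or from $b$; so for $\varepsilon_0$ small, $\dist(D_j,a)\ge1$ or $\dist(D_j,b)\ge1$, and we split the small indices into two families $\mathcal F_a,\mathcal F_b$ accordingly. Take $\mathcal F_a$ and a Möbius map $T$ with $T(a)=\infty$, $T(C)=\R$. For $j\in\mathcal F_a$, $\la D_j$ has diameter $\le4\varepsilon_0$ and lies at distance $\ge1$ from the pole $a$; choosing $\varepsilon_0=\varepsilon_0(\la)$ small enough, $T$ is as close to a similarity on each such $\la D_j$ as we please, and one checks that the $T(D_j)$, $j\in\mathcal F_a$, are genuine discs, each meeting $\R=T(C)$, with $\widetilde{\la}\,T(D_j)$ pairwise disjoint for some $\widetilde{\la}=\widetilde{\la}(\la)>1$ (because $\widetilde{\la}\,T(D_j)\subset T(\la D_j)$), and with $T(E_j)\subset T(D_j)$ compact. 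Theorem~\ref{sup} therefore gives $\gamma\big(\bigcup_{\mathcal F_a}T(E_j)\big)\ge c(\widetilde{\la})\sum_{\mathcal F_a}\gamma\big(T(E_j)\big)$.

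It remains to carry this back through $T$. Every $D_j$ with $j\in\mathcal F_a$ lies in the fixed region $\{1\le|z-a|\le2+2\varepsilon_0\}$, so on a neighbourhood of $\bigcup_{\mathcal F_a}E_j$ the map $T$ is conformal and bi-Lipschitz with \emph{absolute} constants, and hence distorts analytic capacity by at most an absolute factor: $\gamma\big(T(E_j)\big)\asymp\gamma(E_j)$ and $\gamma\big(\bigcup_{\mathcal F_a}T(E_j)\big)\asymp\gamma\big(\bigcup_{\mathcal F_a}E_j\big)$. This comparison is the one nonelementary ingredient; it follows from \eqref{gop} by transporting a near-extremal measure through $T$ and using the identity $\tfrac1{T(z)-T(w)}=c\,\tfrac{(z-a)(w-a)}{z-w}$ (with $|(z-a)(w-a)|\asymp1$ on the set in question) to compare the two Cauchy operators, or it may simply be quoted from \cite{Tolsa-book}. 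Combining yields $\gamma\big(\bigcup_{\mathcal F_a}E_j\big)\ge c(\la)\sum_{\mathcal F_a}\gamma(E_j)$, and likewise for $\mathcal F_b$; adding the two families (a factor $2$) and then restoring the $N(\la)$ large discs via the monotonicity estimate above gives Corollary~\ref{circle1}. The genuine difficulty is the coordination of constants --- $\varepsilon_0$, the separation $\widetilde{\la}$ of the transported discs, and the bi-Lipschitz bounds for $T$ must all be tuned so that the final constant depends on $\la$ alone --- together with the routine but not entirely trivial verification of the conformal-distortion behaviour of $\gamma$ through \eqref{gop}.
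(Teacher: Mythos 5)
Your proposal is correct and follows essentially the same route as the paper: discard the boundedly many large discs, restrict the small ones to a half of the circle away from the pole, straighten by a M\"obius map, and invoke Theorem \ref{sup} together with the fact that $\gamma$ is distorted only by absolute factors under the map on sets bounded away from the pole. The single divergence is in how that comparison is proved --- the paper runs a function-theoretic argument (splitting $f(g(w))g'(w)$ for an admissible $f$ by the Cauchy formula), whereas you transport near-extremal measures through \eqref{gop} using the M\"obius kernel identity $\tfrac1{T(z)-T(w)}=c\,\tfrac{(z-a)(w-a)}{z-w}$; both are elementary substitutes for Tolsa's bilipschitz theorem and both work.
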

\begin{proof}
It is sufficient to consider the unit circle.
There are at most $K$ discs $D_j$ with radii $r_j\ge1/30$, intersecting the unit circle $\Gamma$, where $K$ is an absolute constant. Hence, we may assume that $r_j<1/30$. Moreover, as in Corollary \ref{circle}, we may restrict ourself by discs intersecting the left semicircle $T$. Let $h(z):=i\frac{1+z}{1-z}$ be the conformal mapping of $\Gamma$ onto the real line. Let $E$ be a compact subset of $G^0:=\{z:\dist(z,T)<1/10\}$, and $\mathcal E:=h(E)$. We prove that
\begin{equation}\label{eqlast}
\g(E)\asymp\g(\mathcal E)
\end{equation}
with absolute constants of comparison. Certainly, this relation is a consequence of the general result by Tolsa \cite{To3} about stability of the analytic capacity under bilipschitz maps. But we prefer a direct elementary proof (which possibly is not new) without Tolsa's very difficult result.
Pick any function $f(z)$ such that $f(z)$ is holomorphic and $|f(z)|<1$ in $\C\setminus E$. Define the sets
$$
G:=\{z:\dist(z,T)<1/5\},\quad \mathcal G:=h(G),\quad \mathcal G^0:=h(G^0),
$$
and the function $F(w):=f(g(w))g'(w)$, where $g(w)=\frac{w-i}{w+i}$ is the inverse of $h$. Clearly, $|F(w)|\le C_1$ as $w\in \mathcal G\setminus \mathcal E$, and the length of $\partial \mathcal G$ does not exceed $C_2$, where $C_1,C_2$ are absolute constants. Fix $w\in \mathcal G^0\setminus \mathcal E$, and let $L^0$ be a closed curve in $\mathcal G^0\setminus \mathcal E$ which encloses $\mathcal E$ but not $w$, and is oriented in such a way that $w$ is ``on the left''. If $\partial \mathcal G$ is oriented in the same way, by the Cauchy formula we have
$$
F(w)=\frac1{2\pi i}\int_{\partial \mathcal G}\frac{F(\xi)}{\xi-w}\,d\xi+\frac1{2\pi i}\int_{L^0}\frac{F(\xi)}{\xi-w}\,d\xi=:F_1(w)+F_2(w).
$$
Since $|\xi-w|$ exceeds an absolute constant as $\xi\in\partial \mathcal G$, $|F_1(w)|<C$ as $w\in \mathcal G^0\setminus \mathcal E$. But $F$ is bounded in this domain as well. Hence, $F_2(w)$ is bounded in $\mathcal G^0\setminus \mathcal E$ and holomorphic in $\C\setminus \mathcal E$. By the maximum principle, $F_2(w)$ is holomorphic and bounded by an absolute constant in $\C\setminus \mathcal E$. Let $L$ be a contour in $\mathcal G\setminus \mathcal E$ inclosing $\mathcal E$. Since $F_1(w)$ is holomorphic in $\mathcal G$, we have $\int_{L}F_1(w)\,dw=0$. Hence,
$$
\int_{L}F_2(w)\,dw=\int_{L}F(w)\,dw=\int_{L}f(g(w))\,dg(w)=\int_{g(L)}f(z)\,dz.
$$
Thus, $\g(E)\le C\g(\mathcal E)$. The similar arguments yield the inverse inequality, if we start with the compact $\mathcal E$ (we may assume that $\mathcal E\subset \mathcal G^0$). Therefore, \eqref{eqlast} is proved. Corollary \ref{circle1} is a direct consequence of \eqref{eqlast} and Theorem \ref{sup}.
\end{proof}

\section{Proof of Theorem \ref{main1}}
\label{m1}

\subsection{Necessity of the condition \eqref{mainc}}\label{s31}
Suppose that $\|\mathcal C_\mu\|_\mu\le C<\infty$ and $\supp\mu\subset E$. One can easily see that $\|\mathcal C_{\mu|B}\|_{\mu|B}\le C<\infty$ for any disc $B$. Moreover, boundedness of $\mathcal C_\mu$ implies that $\alpha\mu\in\Sigma$ with $\alpha$ depending only on $C$, see for example \cite{David-LNM}. Thus, the measure $c\mu|B$, $c=c(C)>0$, participates in the right hand side of \eqref{gop} with $F=B\cap E$, and we get \eqref{mainc}.

\subsection{Sufficiency of the condition \eqref{mainc}}\label{s32}

The following result was proved in \cite{NToV1}, although was not formulated explicitly (see the last three pages of Section~3 in \cite{NToV1}).

\begin{theorem}\label{comp}
Suppose that $\{D_j\}$ are discs on the plane and the dilated discs $\la D_j$, $\la>1$, are disjoint. Let $\nu, \sigma$ be two positive measures supported in $ \bigcup_j D_j$ such that $c_1\nu(D_j)\le \sigma(D_j) \le c_2 \nu(D_j)$, $0<c_1<c_2<\infty$. Suppose also that the Cauchy operators $\mathcal C_{\sigma_j}$, $\sigma_j=\sigma|_{D_j}$, are uniformly bounded. Then if $\nu$ is a Cauchy operator measure, then $\a\sigma$ is also a Cauchy operator measure with a constant $\a$ depending only on $c_1, c_2$, and $\la$.
\end{theorem}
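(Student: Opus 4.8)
The plan is to reduce the statement to a combination of the single-disc boundedness of $\mathcal C_{\sigma_j}$ and a ``large-scale'' comparison between $\nu$ and $\sigma$ when seen from far away. Write $\nu_j=\nu|_{D_j}$ and $\sigma_j=\sigma|_{D_j}$, so $\nu=\sum_j\nu_j$ and $\sigma=\sum_j\sigma_j$; by hypothesis $c_1\nu(D_j)\le\sigma(D_j)\le c_2\nu(D_j)$ for every $j$, and $\|\mathcal C_{\nu}\|_\nu\le 1$, $\|\mathcal C_{\sigma_j}\|_{\sigma_j}\le C_0$ for all $j$. First I would fix $\e>0$ and a function $f\in L^2(\sigma)$, decompose $f=\sum_j f_j$ with $f_j=f\chi_{D_j}$, and split the truncated operator as a diagonal part $\sum_j \mathcal C^\e_{\sigma_j} f_j$ and an off-diagonal part coming from the interaction of distinct discs. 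The diagonal part is immediately controlled: $\big\|\sum_j \mathcal C^\e_{\sigma_j} f_j\big\|_{L^2(\sigma)}^2=\sum_j\|\mathcal C^\e_{\sigma_j} f_j\|_{L^2(\sigma_j)}^2\le C_0^2\sum_j\|f_j\|_{L^2(\sigma_j)}^2=C_0^2\|f\|_{L^2(\sigma)}^2$, using disjointness of the $D_j$ (hence of the supports of the $\sigma_j$).

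The heart of the matter is the off-diagonal piece $T^\e f:=\sum_i \chi_{D_i}\sum_{j\ne i}\mathcal C^\e_{\sigma_j}f_j$, i.e.\ for $z\in D_i$ the sum $\sum_{j\ne i}\int_{D_j}\frac{f_j(\xi)}{\xi-z}\,d\sigma(\xi)$ (with the truncation only relevant for large $|\xi-z|$, since $\la D_i\cap\la D_j=\emptyset$ forces $|\xi-z|$ bounded below by a fixed multiple of $r_i+r_j$ when $i\ne j$). For this part the Cauchy kernel is smooth across the gap between $D_j$ and $D_i$, so one replaces $\frac1{\xi-z}$ by $\frac1{x_j-x_i}$ (centers) with an error controlled by $\frac{r_i+r_j}{|x_i-x_j|^2}$ times the masses — this is the standard ``separated pieces see each other through their total masses'' estimate. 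Concretely, I would prove the pointwise bound, for $z\in D_i$,
$$
\Big|\sum_{j\ne i}\mathcal C^\e_{\sigma_j}f_j(z)-\sum_{j\ne i}\mathcal C^\e_{\nu_j}\big(a_j\widetilde f_j\big)(z)\Big|\ \text{is small},
$$
where $a_j\widetilde f_j$ is a suitably normalized transplant of $f_j$ onto $\nu_j$ with $\int \widetilde f_j\,d\nu_j=\int f_j\,d\sigma_j$ and $\|a_j\widetilde f_j\|_{L^2(\nu_j)}\asymp\|f_j\|_{L^2(\sigma_j)}$ (possible because $\nu(D_j)\asymp\sigma(D_j)$; take $\widetilde f_j$ constant equal to the $\sigma_j$-average of $f_j$, say). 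Then the off-diagonal part for $\sigma$ is, up to a controlled error, the off-diagonal part of $\mathcal C^\e_\nu$ applied to $g:=\sum_j a_j\widetilde f_j\in L^2(\nu)$, which is bounded by $\|\mathcal C^\e_\nu\|_\nu\|g\|_{L^2(\nu)}+(\text{diagonal }\nu\text{-part})\lesssim \|f\|_{L^2(\sigma)}$ since $\nu$ is a Cauchy operator measure (and its single-disc pieces are trivially bounded as truncations of a bounded operator restricted to a disc).

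The error terms in this transplantation argument have the shape $\sum_i\sum_{j\ne i}\frac{(r_i+r_j)}{|x_i-x_j|^2}\,|m^\sigma_j-m^\nu_j|$-type sums with $m^\cdot_j$ of size $\sigma(D_j)\asymp\nu(D_j)$, paired against $|f|$; these are exactly the Marcinkiewicz/almost-orthogonality sums that appear in Section~\ref{almost}, and they are summable because for each fixed $j$, $\sum_{i\ne j}\frac{r_i+r_j}{|x_i-x_j|^2}\lesssim \frac1{r_j}$ by the $\la$-separation of the discs (the same Schur-test computation as in the estimate of $\sum_i g_i\gamma_i$ there). Carrying the error through a Schur test against the weights $\|f_i\|_{L^2(\sigma_i)}$ gives a bound $\lesssim_{c_1,c_2,\la}\|f\|_{L^2(\sigma)}$. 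Assembling the diagonal bound, the transplanted off-diagonal bound, and the error bound, and taking the supremum over $\e$, yields $\|\mathcal C_{\alpha\sigma}\|_{\alpha\sigma}\le 1$ for $\alpha=\alpha(c_1,c_2,\la)$. The main obstacle I anticipate is making the transplantation and the Schur test genuinely uniform in the number of discs — i.e.\ checking that every implicit constant depends only on $c_1,c_2,\la$ and not on how many $D_j$ there are — and handling the truncation parameter $\e$ cleanly (the inner truncation is harmless by separation, but one must be careful that the transplanted function lives at the right scale so the same $\e$-truncation of $\mathcal C_\nu$ controls it). Since the referenced argument in \cite{NToV1} already does essentially this computation, I would organize the write-up to follow that bookkeeping.
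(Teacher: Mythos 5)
First, a bookkeeping point: the paper does not prove Theorem \ref{comp} at all --- it is quoted as a result established (implicitly) in the last pages of Section~3 of \cite{NToV1} --- so there is no internal proof to compare yours against. Judged on its own merits, your skeleton (diagonal part handled by the uniform bounds on $\mathcal C_{\sigma_j}$; off-diagonal part transplanted onto $\nu$ using $\sigma(D_j)\asymp\nu(D_j)$ and the smoothness of the kernel across the gaps) is the natural one. But the step where you sum the transplantation errors has a genuine gap. You invoke the bound $\sum_{i:\,i\ne j}\frac{r_i+r_j}{|x_i-x_j|^2}\lesssim \frac1{r_j}$ ``by the $\la$-separation, the same computation as for $\sum_i g_i\gamma_i$.'' That computation in Section~\ref{almost} is one-dimensional: it compares the sum with $\int_{|t-y_j|\ge r_j}|t-y_j|^{-2}\,dt$ and uses that all the discs meet the real line. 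Theorem \ref{comp} has \emph{no} assumption on the location of the discs, and for arbitrary $\la$-separated discs the bound is false: for $N^2$ unit discs in a grid of spacing $3$ the sum is of order $\log N$. Worse, the sum that actually appears in your error term, $\sum_{j\ne i}\frac{r_j}{|x_i-x_j|^2}\int|f_j|\,d\sigma_j\le\sum_{j\ne i}\frac{r_j\,\sigma(D_j)^{1/2}}{|x_i-x_j|^2}\|f_j\|_{L^2(\sigma_j)}$, is not uniformly summable even after using the linear growth of $\nu$ (which you never invoke, although it is essential and must be extracted from the hypothesis that $\nu$ is a Cauchy operator measure): take the lacunary chain $D_k=D(10\cdot 2^k,2^k)$, $k=1,\dots,N$, with $\nu|_{D_k}$ of mass $\asymp 2^k$ on a diameter --- a perfectly good Cauchy operator measure --- and the unweighted pointwise error on the smallest disc is of order $N$.

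The reason the theorem is nevertheless true is that a large pointwise error on $D_i$ is always paired with a small mass $\sigma(D_i)$, and a correct proof must keep this balance instead of estimating the off-diagonal interaction in absolute value by a crude Schur test. This is exactly the non-trivial content of the argument in \cite{NToV1}, which does not run a Schur test against an arbitrary $f\in L^2(\sigma)$ but reduces matters (via a non-homogeneous $T1$-type criterion) to testing on indicators, where the comparison of $\mathcal C\sigma_k$ with $\mathcal C\nu_k$ can be closed. So: right decomposition, right heuristic (``separated pieces see each other only through their masses''), but the quantitative summation of the errors --- the heart of the proof --- does not work as written, because the Marcinkiewicz-type bound you import is a feature of discs strung along a line, not of arbitrary separated discs in the plane.
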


We need some preliminary constructions and notations. Here is an easy lemma.
\begin{lemma}\label{crossB}
For any circle $T$ and any disc $B$,
$$
\gamma(T \cap B) \asymp \mathcal{H}^1(T\cap B)
$$
with absolute constants of comparison.
\end{lemma}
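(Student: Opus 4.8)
The plan is to prove the two-sided bound $\gamma(T\cap B)\asymp \mathcal H^1(T\cap B)$ by treating $T\cap B$ as a subarc (or the full circle) of $T$ and comparing it with a line segment of the same length, for which the analytic capacity is classically known. Write $\ell:=\mathcal H^1(T\cap B)$ and let $R$ be the radius of $T$. There are two regimes: either $B\cap T$ is a ``short'' arc, say $\ell\le R$, in which case the arc is comparable to a chord/segment of length $\asymp\ell$ up to a bilipschitz change of scale; or $\ell\gtrsim R$, in which case $T\cap B$ contains (and is contained in) arcs of length $\asymp R$, and one uses $\gamma(T)\asymp R$ together with monotonicity. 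Actually the cleanest route is: the whole circle $T$ of radius $R$ has $\gamma(T)\asymp R$ (it is a rectifiable curve, e.g. by \eqref{gop} applied to arclength measure, or just directly), so by monotonicity $\gamma(T\cap B)\le\gamma(T)\asymp R$, and when $\ell\asymp R$ we are done; thus it remains to handle a genuine proper subarc.

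For the \textbf{lower bound} on a subarc $\Gamma_0:=T\cap B$ of length $\ell$: a subarc of a circle with $\ell\le R$ is a chord-arc curve with an absolute constant $A_0$, so one can use the arclength measure $\sigma$ on $\Gamma_0$, normalized so that $\sigma\in\Sigma$ (i.e. $\sigma(D(x,r))\le r$); for a chord-arc curve this normalization costs only an absolute factor, and $\|\sigma\|\asymp\ell$. The Cauchy operator on arclength measure on a chord-arc curve is bounded with an absolute norm bound (classical, or Coifman--McIntosh--Meyer / David), so $\sigma/A$ is a Cauchy operator measure supported in $\Gamma_0$ with mass $\asymp\ell$, and \eqref{gop} gives $\gamma(\Gamma_0)\gtrsim\ell$. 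Alternatively one can avoid invoking CMM and instead directly produce the Ahlfors function estimate: map $\Gamma_0$ by a bilipschitz map to a straight segment $L$ of length $\asymp\ell$, note $\gamma(L)=\ell/4$, and transfer the extremal function as in the proof of Corollary \ref{circle1} (the Cauchy-kernel perturbation argument there shows $\gamma$ is comparable under such a map with absolute constants). For the \textbf{upper bound} $\gamma(\Gamma_0)\le C\ell$: this is immediate from Tolsa's semiadditivity, or even more elementarily from the general fact $\gamma(F)\le \mathcal H^1(F)$ for the $1$-dimensional Hausdorff measure — and since $\Gamma_0$ is literally an arc, $\mathcal H^1(\Gamma_0)=\ell$.

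So the ordered steps are: (1) dispose of the case $\mathcal H^1(T\cap B)\gtrsim R$ using $\gamma(T)\asymp R$ and monotonicity of $\gamma$; (2) in the remaining case $T\cap B$ is a proper subarc which, after rescaling by $1/R$, is a chord-arc curve with absolute constant, so reduce to a segment via a bilipschitz map and the transference argument of Corollary \ref{circle1}, or directly via boundedness of the Cauchy operator on arclength measure and \eqref{gop}, to get $\gamma(T\cap B)\gtrsim\mathcal H^1(T\cap B)$; (3) get the reverse inequality from $\gamma(F)\le\mathcal H^1(F)$ applied to the rectifiable set $F=T\cap B$. The only mildly delicate point — the ``main obstacle'' — is making the lower bound genuinely elementary and scale-invariant: one must check that the implied constants in ``arc $\Leftrightarrow$ segment'' do not degenerate as the subarc becomes very short relative to $R$, which is exactly why phrasing it as a chord-arc property (invariant under dilations) rather than curvature of the full circle is the right move.
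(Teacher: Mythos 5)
Your proof is correct, but note that the paper states this lemma without any proof at all (it is announced as ``an easy lemma''), so the comparison is with the argument the authors presumably had in mind rather than with a written one. Your route is sound: since a disc is convex, $T\cap B$ is a single arc; the upper bound follows from $\gamma(F)\le\mathcal H^1(F)$; for the lower bound you correctly isolate the only delicate point, namely that an arc of length $\ell\le R$ ($R$ the radius of $T$) is chord-arc with an \emph{absolute} constant, after which arclength measure has linear growth and bounded Cauchy transform, and \eqref{gop} gives $\gamma\gtrsim\ell$; arcs with $\ell>R$ are handled by monotonicity applied to a subarc of length $R$. However, this invokes the Coifman--McIntosh--Meyer/David theorem (or, in your alternative, a bilipschitz transference in the spirit of Tolsa's theorem), which is much heavier than the lemma deserves. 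The genuinely elementary argument uses the classical fact that for a continuum $K$ one has $\gamma(K)\ge\tfrac14\diam(K)$ (via the Koebe one-quarter theorem applied to the conformal map of the unbounded complementary component; see Garnett's book): for an arc of angular width $\theta\le 1$ the chord satisfies $\diam\ge \frac{\sin(1/2)}{1/2}\,\ell\ge 0.95\,\ell$, so $\gamma\ge c\ell$ with $c$ absolute, and the long-arc case again reduces to a subarc of length $R$. That two-line version, combined with $\gamma\le\diam\le\mathcal H^1$, is what makes the lemma ``easy''; your proof buys nothing extra here except consistency with the Cauchy-operator toolkit used elsewhere in the paper.
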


Now we define new $L_j$. We need a number $N=N(\la)$, which is defined as follows. Recall that $\la>1$ and $\la'= \frac{1+\la}{2}$. Let a disc $D$ with radius $r$ be given. We place the circle of radius $A_\la r$, $A_\la :=\min(1,\la'-1)/1000$, in the center of $D$, and $N$ circles that touch $\partial(\la'D_j)$ from within, of the same radius as the first circle, and on equal distance from each other. We also require that circles do not intersect. By $L$ we denote the union of all circles. Let $N$ be a minimal integer such that the following holds.

\begin{it}
If a disc $B$ intersects $D$ and $\C \setminus \la D$, then at least one circle from $L$ lies inside $B$.
\end{it}

Clearly, such $N=N(\la)<\infty$ exists, and this property remains valid if we reduce the radii of circles in $L$. Since
$\gamma(\mbox{circle}) \asymp\mathcal{H}^1(\mbox{circle})$, we have the obvious lemma:
\begin{lemma}
For the set $L$ defined above, $\gamma(L)\asymp \mathcal{H}^1(L)$, where the comparison constants can depend only on $N$.
\end{lemma}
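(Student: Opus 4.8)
The plan is to exploit the rigid structure of $L$: by construction it is a disjoint union of exactly $N+1$ congruent circles, each of radius $A_\la r$, all lying inside $\la' D$. Both halves of $\gamma(L)\asymp\mathcal{H}^1(L)$ then reduce to the single fact that a circle $T$ of radius $\rho$ satisfies $\gamma(T)\asymp\rho\asymp\mathcal{H}^1(T)$, which is exactly the special case of Lemma \ref{crossB} obtained by taking the disc $B$ there to be any disc containing $T$.

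For the lower bound I would use only monotonicity of analytic capacity. Writing $L=\bigcup_{i=0}^{N}C_i$, where the $C_i$ are the constituent circles, we get $\gamma(L)\ge\gamma(C_0)\ge c_0\,\mathcal{H}^1(C_0)$ with $c_0$ absolute; since the circles are congruent, $\mathcal{H}^1(C_0)=\frac{1}{N+1}\,\mathcal{H}^1(L)$, and hence $\gamma(L)\ge\frac{c_0}{N+1}\,\mathcal{H}^1(L)$. This is the one place where the constant is forced to depend on $N$. For the upper bound I would invoke Tolsa's semiadditivity \cite{Tolsa-sem}, $\gamma(L)\le C\sum_{i=0}^{N}\gamma(C_i)\le C'\sum_{i=0}^{N}\mathcal{H}^1(C_i)=C'\,\mathcal{H}^1(L)$ with $C'$ absolute; alternatively, even more elementarily, $L\subset\overline{\la' D}$ gives $\gamma(L)\le\gamma(\overline{\la' D})=\la' r$ while $\mathcal{H}^1(L)=(N+1)\cdot 2\pi A_\la r$, so $\gamma(L)\le\frac{\la'}{2\pi A_\la(N+1)}\,\mathcal{H}^1(L)$.

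I do not expect any genuine obstacle; the statement is ``obvious'' precisely because $L$ was designed to be a finite disjoint union of congruent circles, so everything follows from $\gamma(\text{circle})\asymp\mathcal{H}^1(\text{circle})$ together with monotonicity and finite semiadditivity of $\gamma$. The only thing worth watching is the bookkeeping of constants: plain monotonicity loses a factor $N+1$ in the lower estimate, which is exactly why the lemma is phrased with constants allowed to depend on $N$.
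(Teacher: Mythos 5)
Your proof is correct and follows the same route the paper takes: the paper simply remarks that the lemma is ``obvious'' since $\gamma(\text{circle})\asymp\mathcal{H}^1(\text{circle})$, which is exactly the reduction you make. Your explicit bookkeeping (monotonicity for the lower bound losing a factor $N+1$, semiadditivity or containment in $\la'D$ for the upper bound) is precisely what the paper leaves implicit, so there is nothing to add.
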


Let $L_j$ be the union of circles associated with $D_j$. We have chosen the number of circles in each $L_j$, but we have a freedom to choose their size. We redefine the radius so that
\begin{equation}\label{eq31}
\mathcal{H}^1(L_j) = A_\la(N+1)\gamma(E_j).
\end{equation}
Then, in particular, $\mathcal{H}^1(\mbox{one circle}) = \frac{1}{N+1} \mathcal{H}^1(L_j) =A_\la\gamma(E_j) \le A_\la r_j$, since $\gamma(E_j) \le \gamma(D_j) =r_j$.

We need the following lemma.
\begin{lemma}
\label{discinside}
Fix an index $j$. Let $B$ be a disc such that at least one circle from $L_j$ lies inside $B$. Then $\gamma(L_j)\asymp \gamma(L_j\cap B)$ with  constants depending only on $\la$. In particular this is true if $D_j\subset B$, or if $B$ intersects $D_j$ and $\C\setminus \la D_j$.
\end{lemma}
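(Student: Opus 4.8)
The plan is to exploit that $L_j$ is simply a union of $N+1$ congruent circles (circumferences): using monotonicity of analytic capacity together with the already established equivalence $\gamma\asymp\mathcal{H}^1$ for such sets, both $\gamma(L_j)$ and $\gamma(L_j\cap B)$ can be pinned, up to $\la$-dependent constants, to the capacity of a single constituent circle. Write $L_j=\bigcup_{k=0}^{N}C_{j,k}$ with $C_{j,k}$ the circles composing $L_j$; by \eqref{eq31} each satisfies $\mathcal{H}^1(C_{j,k})=\tfrac1{N+1}\mathcal{H}^1(L_j)=A_\la\gamma(E_j)$.

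First I would dispose of the easy bound: $L_j\cap B\subset L_j$ gives $\gamma(L_j\cap B)\le\gamma(L_j)$. For the lower bound, the hypothesis supplies a circle $C_{j,k_0}$ contained in $B$; since $C_{j,k_0}\subset L_j$, also $C_{j,k_0}\subset L_j\cap B$, whence $\gamma(L_j\cap B)\ge\gamma(C_{j,k_0})$ by monotonicity. Using $\gamma(\text{circle})\asymp\mathcal{H}^1(\text{circle})$ (absolute constants) together with the lemma just preceding this one, namely $\gamma(L_j)\asymp\mathcal{H}^1(L_j)$ with constants depending on $N$ (by dilation- and translation-invariance of both sides it applies to the rescaled $L_j$ as well), one gets $\gamma(C_{j,k_0})\asymp\mathcal{H}^1(C_{j,k_0})=\tfrac1{N+1}\mathcal{H}^1(L_j)\asymp\tfrac1{N+1}\gamma(L_j)$. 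Since $N=N(\la)$, combining with the upper bound yields $\gamma(L_j\cap B)\asymp\gamma(L_j)$ with constants depending only on $\la$.

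It then remains to check the ``in particular'' part, i.e.\ that a circle of $L_j$ lies in $B$ in the two listed cases. If $B$ meets both $D_j$ and $\C\setminus\la D_j$, this is exactly the defining property of $N$: that $N$ was chosen so that any such disc contains one of the circles of $L_j$, and that property was observed to be preserved under shrinking the radii; the normalization \eqref{eq31} resets each radius to $\tfrac{A_\la\gamma(E_j)}{2\pi}\le A_\la r_j$, which does not exceed the radius $A_\la r_j$ used when $N$ was defined, so the property persists. If $D_j\subset B$, then the central circle of $L_j$, centred at the centre $x_j$ of $D_j$ and of radius $<A_\la r_j<r_j$ (using $A_\la\le\tfrac1{1000}$ and $\gamma(E_j)\le r_j$), lies in $D(x_j,r_j)=D_j\subset B$; so again $B$ contains a circle of $L_j$.

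The main obstacle is not analytic — there is essentially none beyond monotonicity and the prior $\gamma\asymp\mathcal{H}^1$ lemma — but bookkeeping: one must track carefully which constants are absolute and which depend on $\la$ (only through $N$), and, crucially, verify that the radius fixed by \eqref{eq31} is no larger than the radius used in defining $N$, so that the ``$B$ contains a circle of $L_j$'' property is not lost. One should also keep ``circle'' meaning the circumference throughout, so that monotonicity and $\gamma\asymp\mathcal{H}^1$ both apply verbatim.
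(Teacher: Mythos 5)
Your proof is correct and follows essentially the same route as the paper's: the paper bounds $\gamma(L_j)\le A(N+1)\gamma(\text{one circle})\le A(N+1)\gamma(L_j\cap B)$ directly by semiadditivity over the $N+1$ congruent circles, while you route the same comparison through the preceding lemma $\gamma(L_j)\asymp\mathcal{H}^1(L_j)$, which encodes exactly that semiadditivity. Your additional verification of the ``in particular'' clause (that the radius reset by \eqref{eq31} does not exceed the radius $A_\la r_j$ used in defining $N$, so the covering property persists) is a detail the paper leaves implicit, and you handle it correctly.
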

\begin{proof}
Indeed, by semiadditivity of $\gamma$ we have $\gamma(L_j)\le A (N+1) \gamma(\mbox{central circle}) \le A(N+1)\gamma(L_j \cap B)$.
\end{proof}
\begin{lemma}\label{cutfrominsidediscs}
For any disc $B$ the following relation holds with  constants depending only on $\la$:
$$
\gamma\big(\bigcup_{j:D_j\subset B} L_j\big)\asymp \gamma\big(\bigcup_{j:D_j\subset B} L_j \cap B\big).
$$
\end{lemma}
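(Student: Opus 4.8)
The goal of Lemma~\ref{cutfrominsidediscs} is to upgrade Lemma~\ref{discinside}, which handles a single index $j$, to the whole subfamily $\{j: D_j\subset B\}$ simultaneously. The plan is to exploit the two-sided nature of what has to be proved: the inequality
$$
\gamma\big(\bigcup_{j:D_j\subset B} L_j \cap B\big)\le C\,\gamma\big(\bigcup_{j:D_j\subset B} L_j\big)
$$
is immediate by monotonicity of $\gamma$, so the content is the reverse inequality $\gamma\big(\bigcup_{j:D_j\subset B} L_j\big)\le C\,\gamma\big(\bigcup_{j:D_j\subset B} L_j \cap B\big)$.

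First I would apply Tolsa's semiadditivity to the left-hand side to peel off one circle from each $L_j$, reducing matters to a lower bound on $\gamma\big(\bigcup_{j:D_j\subset B}(\text{central circle of }D_j)\cap B\big)$. Since $D_j\subset B$ for every $j$ in the sum, each central circle lies entirely inside $B$, so intersecting with $B$ changes nothing for these circles; the union of the central circles is already contained in $\bigcup_{j:D_j\subset B} L_j\cap B$. Thus it suffices to show $\gamma\big(\bigcup_{j:D_j\subset B} L_j\big)\le C\,\gamma\big(\bigcup_{j:D_j\subset B}(\text{central circle of }D_j)\big)$. This is the same comparison as in Lemma~\ref{discinside} but now for the whole family: the set $\bigcup_j L_j$ is the union of $(N+1)$ circles per index, each of which, by construction, is a translate/contraction of the central circle and lies in $\la' D_j$ with $\la'D_j$ pairwise disjoint (indeed $\la D_j$ are disjoint and $\la'<\la$).

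The cleanest way to conclude is to observe that, because the dilated discs $\la' D_j$ are disjoint and each $L_j\subset\la' D_j$, the family $\{L_j\}_{j:D_j\subset B}$ is a $\la'$-separated family of discs with sets inside in exactly the sense of Theorem~\ref{superth} / Corollary~\ref{circle}: the center of each circle lies on $\partial D_j$ or at the center of $D_j$. Applying semiadditivity one direction and Theorem~\ref{superth} (with the fixed $\la'>1$, so the constant is absolute once $\la$ is fixed) in the other direction to the family consisting of the central circles, we get
$$
\gamma\big(\bigcup_{j:D_j\subset B} L_j\big)\le A(N+1)\sum_{j:D_j\subset B}\gamma(\text{central circle of }D_j)\le \frac{A(N+1)}{c}\,\gamma\big(\bigcup_{j:D_j\subset B}(\text{central circle of }D_j)\big),
$$
and the last set sits inside $\bigcup_{j:D_j\subset B} L_j\cap B$, finishing the proof with a constant depending only on $\la$ (through $N=N(\la)$ and the chord-arc / separation constants).

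The main obstacle is making sure the hypotheses of whichever "reassembly" result one invokes really apply to the central circles of the subfamily $\{D_j\subset B\}$: one needs these circles to form a $\la'$-separated family (true, since they lie in the disjoint $\la' D_j$) and one needs a curve (or the circle version, Corollary~\ref{circle}) through their centers — but here the centers are just the centers of the $D_j$, which need not lie on any single chord-arc curve. The fix is that we do not need a common curve at all: the inequality $\gamma(\bigcup F_i)\ge c\sum\gamma(F_i)$ for $\la$-separated discs with the sets being full circles of radius comparable to $r_j$ can be obtained directly from Theorem~\ref{sup}/Corollary~\ref{circle1} applied disc-by-disc, or even more simply by noting that each central circle has $\gamma\asymp\mathcal H^1\asymp A_\la\gamma(E_j)$ and the single-index comparison of Lemma~\ref{discinside} already gives $\gamma(L_j)\asymp\gamma(\text{central circle})$; summing and using semiadditivity both ways then yields the claim. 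So the real work is just bookkeeping with the constant $N(\la)$ and verifying the "inside $B$" reduction, both of which are routine given the earlier lemmas.
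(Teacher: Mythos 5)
The easy direction and the reduction via semiadditivity are fine, but your key step is false: the second inequality in your displayed chain,
$$
\sum_{j:D_j\subset B}\gamma(C_j)\;\le\;\frac{1}{c}\,\gamma\Big(\bigcup_{j:D_j\subset B}C_j\Big),
$$
is a super-additivity statement for circles sitting in $\la$-separated discs with \emph{no alignment condition whatsoever}, and that is exactly the kind of statement the introduction's corner-Cantor example refutes: arrange $4^n$ separated discs of radius $4^{-n}$ in the Cantor pattern and put a circle in each; the sum of capacities is $\asymp 1$ while the capacity of the union tends to $0$. You correctly identify the obstacle (``the centers need not lie on any single chord-arc curve''), but your proposed fixes do not repair it: ``applying Theorem~\ref{sup} disc-by-disc'' is meaningless for a statement about the union over many discs, ``semiadditivity both ways'' only ever gives $\gamma(\bigcup)\le C\sum$ and never the reverse, and Lemma~\ref{discinside} is a single-index comparison that cannot produce the needed lower bound on the capacity of the union. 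Also, as a minor point, the circles $C_j$ have length $A_\la\gamma(E_j)$, which may be much smaller than $r_j$, so they are not ``full circles of radius comparable to $r_j$.''

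The paper avoids the false global super-additivity by splitting the index set $\{j:D_j\subset B\}$ into $\{j:\la'D_j\subset B\}$ and $\{j:D_j\subset B,\ \la'D_j\not\subset B\}$. For the first group $L_j\subset\la'D_j\subset B$, so $L_j\cap B=L_j$ and there is nothing to prove. For the second group the geometry supplies the missing curve for free: each such $\la'D_j$ meets both $B$ and its complement, hence intersects the common circle $\partial B$, and the discs $\frac{\la}{\la'}(\la'D_j)=\la D_j$ are disjoint, so Corollary~\ref{circle1} applies with dilation constant $\la/\la'$ and yields $\gamma\big(\bigcup L_j\cap B\big)\ge c\sum\gamma(L_j\cap B)$ for that subfamily; combining with Lemma~\ref{discinside} and semiadditivity finishes the proof. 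This identification of $\partial B$ as the aligning circle for the only problematic indices is the idea your argument is missing.
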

\begin{proof}
By semiadditivity of $\gamma$,
$$
\gamma\big(\bigcup_{D_j\subset B} L_j\big) \le A \bigg(\gamma\big(\bigcup_{\la'D_j\subset B}L_j\big) + \gamma\big(\bigcup_{D_j\subset B,\ \la'D_j\not\subset B}L_j\big)\bigg).
$$
The first term is the same as $\gamma(\bigcup_{\la'D_j\subset B}L_j\cap B)$. For the second, we notice that $L_j\cap B\subset \la'D_j$. Since discs $\frac{\la}{\la'}\la' D_j$ are pairwise disjoint, we may apply Corollary \ref{circle1} with just a new dilation constant $\la_{new}:= \frac{\la}{\la'}$. Thus,
\begin{align*}
\gamma\big(\bigcup_{D_j\subset B,\ \la'D_j\not\subset B} L_j\cap B\big) &\ge c\sli_{D_j\subset B,\ \la'D_j\not\subset B} \gamma(L_j\cap B)\\
&\ge c_1 \sli_{D_j\subset B,\ \la'D_j\not\subset B} \gamma(L_j) \ge c_2 \gamma\big(\bigcup_{D_j\subset B,\ \la'D_j\not\subset B} L_j\big),
\end{align*}
which finishes the proof. The second inequality uses Lemma \ref{discinside}.
\end{proof}

For a given disc $B$ denote by $\J=\J(B)$ the set of indices
$\J:=\{j: D_j\cap B \not= \emptyset \ \mbox{and} \ D_j\not\subset B\}$.

\begin{lemma}
\label{cutfromboundarydiscs}
Suppose that a disc $B$ intersects more than one $D_j$. Then
$$
\gamma\big(\bigcup_{\J} L_j\big)\asymp \gamma\big(\bigcup_{\J} L_j \cap B\big).
$$
with comparison constants depending only on $\la$.
\end{lemma}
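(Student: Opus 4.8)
The plan is to run the mechanism of the proof of Lemma~\ref{cutfrominsidediscs}; the only genuinely new ingredient is a small geometric observation that makes Lemma~\ref{discinside} applicable to \emph{every} index in $\J$. One direction of the asserted comparison is free, since $\bigcup_{j\in\J}(L_j\cap B)\subset\bigcup_{j\in\J}L_j$ and $\gamma$ is monotone; all the work is in the reverse inequality $\gamma\big(\bigcup_{\J}L_j\big)\le C(\la)\,\gamma\big(\bigcup_{\J}(L_j\cap B)\big)$.

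The first step is the geometric fact that, because $B$ meets more than one of the discs $D_k$, it is contained in no dilated disc $\la D_k$. Indeed, if $B\subset\la D_k$, then every disc $D_i$ meeting $B$ also meets $\la D_k$; but for $i\ne k$ the $\la$-separation of $D_i$ and $D_k$ forces their centers to be at least $\la(r_i+r_k)$ apart, whereas $D_i\cap\la D_k\ne\emptyset$ would require them to be within $r_i+\la r_k<\la(r_i+r_k)$ — impossible. Hence $B\subset\la D_k$ would leave $D_k$ as the only disc meeting $B$, contradicting the hypothesis. Consequently, for each $j\in\J$ the disc $B$ meets $D_j$ (by the definition of $\J$) and also meets $\C\setminus\la D_j$ (because $B\not\subset\la D_j$), so Lemma~\ref{discinside} applies and gives $\gamma(L_j)\asymp\gamma(L_j\cap B)$ with constants depending only on $\la$.

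The second step is a string-of-beads estimate along the circle $\partial B$. For $j\in\J$ the connected set $D_j$ meets both $B$ and $\C\setminus B$, hence meets $\partial B$, and therefore so does $\la'D_j\supset D_j$; moreover $L_j\cap B\subset\la'D_j$. Since the discs $\frac{\la}{\la'}(\la'D_j)=\la D_j$, $j\in\J$, are pairwise disjoint, Corollary~\ref{circle1} — applied with dilation constant $\la/\la'>1$ to the family $\{\la'D_j\}_{j\in\J}$, all of which meet the circle $\partial B$, and to the sets $L_j\cap B$ — yields
$$
\gamma\Big(\bigcup_{j\in\J}(L_j\cap B)\Big)\ \ge\ c(\la)\sum_{j\in\J}\gamma(L_j\cap B).
$$
Combining this with $\gamma(L_j)\asymp\gamma(L_j\cap B)$ from the first step and with the countable semiadditivity of $\gamma$,
$$
\gamma\Big(\bigcup_{\J}L_j\Big)\ \le\ C\sum_{j\in\J}\gamma(L_j)\ \le\ C'\sum_{j\in\J}\gamma(L_j\cap B)\ \le\ C''\,\gamma\Big(\bigcup_{\J}(L_j\cap B)\Big),
$$
the required reverse inequality. (If one prefers to invoke Corollary~\ref{circle1} only for compact sets, first replace each $L_j\cap B$ by a compact subset carrying at least half of its capacity, exactly as the analogous point is handled for Lemma~\ref{cutfrominsidediscs}.)

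The only point needing any thought — and the sole place where the hypothesis that $B$ meets more than one $D_j$ is used — is the geometric claim $B\not\subset\la D_k$; everything afterwards repeats the scheme behind Lemma~\ref{cutfrominsidediscs} almost verbatim. So I expect that claim, together with keeping track of which lemma applies to which index, to be the main (and rather mild) obstacle.
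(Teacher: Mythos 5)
Your proposal is correct and follows essentially the same route as the paper: the observation that $B$ intersecting more than one $D_j$ forces $B\not\subset\la D_j$ for all $j\in\J$, hence (via the choice of $N$) a full circle of $L_j$ lies in $B$ so that $\gamma(L_j)\asymp\gamma(L_j\cap B)$, followed by Corollary~\ref{circle1} applied along $\partial B$ to sum the capacities. Your version merely spells out the geometric claim and the $\la'$-bookkeeping a bit more explicitly than the paper does.
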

\begin{proof}
Here again we will use Corollary \ref{circle1} of Theorem \ref{sup}. Since $B$ intersects more than one $D_j$,  it cannot be contained in $\la D_j$,  $j\in \J$.  Thus, it contains at least one circle from $L_j$ for each $j\in \J$ (it follows from the choice of $N$).  Call this circle $C_j$. We  apply
Corollary \ref{circle1} to $D_j$ with dilation constant $\la$ to get the estimate
$$
\gamma\big(\bigcup_{\J} L_j \cap B)\ge c \sli_{\J}\gamma(L_j\cap B)
\ge\sli_{\J} \gamma(\mbox{$C_j$}) \ge c_1 \sli_{\J} \gamma(L_j)\ge c_2 \gamma(\bigcup_{\J}L_j),
$$
which finishes the proof.
\end{proof}

Finally, we need the following notation. Fix a disc $B$. Denote
$$
F_j=\begin{cases} E_j, & D_j\subset B \\
\emptyset, & D_j\not\subset B. \end{cases}, \quad F=\bigcup F_j.
$$
{\bf Remark.}
A disc $B$ will be free to change in what follows. The constants in further inequalities will never depend on $B$.
\smallskip

Our next goal is to prove that under assumptions of Theorem \ref{main1}, the inequality
$$
\gamma\big(\bigcup L_j \cap B\big) \ge c\sli \gamma(L_j\cap B)
$$
holds with a universal constant $c$ (universality means that $c$ will not depend on the disc $B$). We need the following two lemmas.

 We fix a small positive absolute constant $\varepsilon$. The choice of smallness will be clear from what follows.
\begin{lemma}[The first case]\label{firstcase}
Suppose that $\gamma(F)\le \varepsilon \gamma(E\cap B)$. Then there exists a constant $c$, that can depend only on $N$, $\varepsilon$, and other universal constants, such that
$$
\gamma\big(\bigcup L_j \cap B\big) \ge c\sli \gamma(L_j\cap B).
$$
\end{lemma}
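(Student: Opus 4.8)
The plan is to split the index set according to the position of the inflated disc $\la'D_j$ relative to $B$, and to control the ``interior'' part of $\sum_j\gamma(L_j\cap B)$ by invoking the two standing hypotheses of Theorem \ref{main1} — the extremality $\|\mu_j\|\asymp\gamma(E_j)$ and the condition \eqref{mainc}, $\mu(B)\le C_0\gamma(E\cap B)$ — since plain semiadditivity would only produce the wrong inequality there. Write $A:=\{j:\la'D_j\subset B\}$, for which $L_j\cap B=L_j$, and $\J':=\{j:\la'D_j\cap B\neq\emptyset,\ \la'D_j\not\subset B\}$, so that each such $\la'D_j$ meets the circle $\partial B$; every $j$ with $L_j\cap B\neq\emptyset$ lies in $A\cup\J'$, and $\J\subseteq\J'$. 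One may assume $B$ meets at least two of the discs $D_j$ (otherwise the $A$-sum has a single term dominated by $\gamma(\bigcup_j L_j\cap B)$ and the $\J'$-sum is handled by the step below); by disjointness of $\{\la D_k\}$ this guarantees $B\not\subset\la D_j$ for every $j$.

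First I would handle the boundary family $\J'$: the discs $\{\la'D_j\}_{j\in\J'}$ are $\frac{\la}{\la'}$-separated and all meet $\partial B$, and $L_j\cap B\subset\la'D_j$, so the last assertion of Corollary \ref{circle1} (with dilation constant $\la/\la'$) gives $\gamma\big(\bigcup_{\J'}L_j\cap B\big)\ge c\sum_{\J'}\gamma(L_j\cap B)$ with $c=c(\la)$. Next I would reduce $\gamma(E\cap B)$ to the boundary discs: since $E\cap B\subset F\cup\bigcup_{j\in\J}(E_j\cap B)$, Tolsa's semiadditivity together with the first-case hypothesis $\gamma(F)\le\ve\gamma(E\cap B)$ yields, for $\ve$ small enough, $\gamma(E\cap B)\le C\gamma\big(\bigcup_{\J}E_j\cap B\big)$. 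Then, again by semiadditivity and \eqref{eq31} (which makes $\gamma(L_j)\asymp_{N,\la}\gamma(E_j)$), $\gamma\big(\bigcup_\J E_j\cap B\big)\le C\sum_\J\gamma(E_j)\asymp\sum_\J\gamma(L_j)$; and for $j\in\J$ the disc $B$ meets $D_j$ and $\C\setminus\la D_j$, so by the choice of $N$ it contains a circle of $L_j$ and Lemma \ref{discinside} gives $\gamma(L_j)\asymp\gamma(L_j\cap B)$. Combining, $\gamma(E\cap B)\lesssim\sum_\J\gamma(L_j\cap B)\le\sum_{\J'}\gamma(L_j\cap B)\lesssim\gamma\big(\bigcup_{\J'}L_j\cap B\big)\le\gamma\big(\bigcup_j L_j\cap B\big)$.

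It remains to control the interior sum $\sum_A\gamma(L_j)=\sum_A\gamma(L_j\cap B)$, and this is the only place the extra hypotheses of Theorem \ref{main1} are used: $\sum_A\gamma(L_j)\le\sum_{j:D_j\subset B}\gamma(L_j)\asymp\sum_{j:D_j\subset B}\gamma(E_j)\asymp\sum_{j:D_j\subset B}\|\mu_j\|\le\mu(B)\le C_0\gamma(E\cap B)$, where we used $\gamma(L_j)\asymp\gamma(E_j)$, the extremality $\|\mu_j\|\asymp\gamma(E_j)$, the fact that the $\mu_j$ with $D_j\subset B$ are supported inside $B$ (so their masses add up to at most $\mu(B)$), and \eqref{mainc}. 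By the previous paragraph this is $\lesssim\gamma\big(\bigcup_j L_j\cap B\big)$. Adding the $A$- and $\J'$-contributions gives $\sum_j\gamma(L_j\cap B)\lesssim\gamma\big(\bigcup_j L_j\cap B\big)$, with a constant depending only on $N=N(\la)$, on $\ve$, on $C_0$, and on the comparison constants in the extremality assumption.

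The main obstacle is precisely this interior estimate: $\sum_{j:D_j\subset B}\gamma(E_j)$ is in general much larger than $\gamma(F)=\gamma\big(\bigcup_{D_j\subset B}E_j\big)$ — this is exactly the failure of additivity that motivates the paper — so the interior discs cannot be treated by semiadditivity without running into a circular argument. The way out is to route $\sum_{D_j\subset B}\gamma(E_j)$ through the total mass $\mu(B)$ via extremality and then through \eqref{mainc}, after the first-case hypothesis has been used to make $\gamma(E\cap B)$ essentially carried by the boundary family $\J$, whose beads $L_j$ genuinely line up on the circle $\partial B$ and are therefore additive by Corollary \ref{circle1}.
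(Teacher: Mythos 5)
Your proof is correct and follows essentially the same route as the paper's: the boundary family is made additive by Corollary \ref{circle1} applied to separated discs meeting the circle $\partial B$, the hypothesis $\gamma(F)\le\ve\gamma(E\cap B)$ transfers $\gamma(E\cap B)$ to that boundary family, and the interior sum is routed through $\mu(B)$ via extremality and \eqref{mainc} --- exactly the chain \eqref{zero}--\eqref{12c} in the text. The only difference is cosmetic: you split the indices into two groups keyed to $\la'D_j$ versus $B$, whereas the paper uses three groups ($D_j\subset B$, the set $\J$, and the set $\J_1$ of $j$ with $D_j\cap B=\emptyset$ but $L_j\cap B\ne\emptyset$), and both bookkeepings close the argument.
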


\begin{proof}
Suppose that $B$ intersects only one $\la D_j$. Then the $\bigcup$ and the $\sum$ have only one term, and there is nothing to prove. So, we can assume that $B$ intersects at least two of $\la D_j$'s.
Notice also that by this assumption, by the fact that $\la D_i$ are pairwise disjoint, and by the choice of $N$, if $B$ intersects $D_j$ then at least one circle from $L_j$ lies inside $B$. Let $\J$ be as in Lemma \ref{cutfromboundarydiscs}.
Using Lemma \ref{discinside} and Corollary \ref{circle1} we get

\begin{equation}
\label{zero}
 \sli_{\J} \gamma(L_j)\le A_1 \sli_{\J}\gamma(L_j\cap B) \le A_2 \gamma(\bigcup_{\J}L_j \cap B)\,.
 \end{equation}

On the other hand, by the assumption of Theorem \ref{main1},
\begin{equation}\label{star}
\sli_{D_j\subset B}\gamma(L_j) \le C\sli_{D_j\subset B}\gamma(E_j) \le C'\sli_{D_j\subset B} \mu_j(D_j)\le C'\mu(B) \le C'C_0 \gamma(E\cap B).
\end{equation}
Also with an absolute constant $A$,
$$
\gamma(E\cap B) \le A \bigg(\gamma(F)+\gamma\big(\bigcup_{\J} E_j \cap B\big)\bigg) \le \varepsilon A \gamma(E\cap B) + A\gamma\big(\bigcup_{\J} E_j \cap B)\big).
$$
Thus, if $\varepsilon$ is small enough (notice that the smallness depends only on $A$), we have
\begin{equation}
\label{twostar}
\gamma(E\cap B)\le C \gamma\big(\bigcup_{\J} E_j \cap B)\big).
\end{equation}
Therefore, combining \eqref{star},  \eqref{twostar}, and \eqref{zero}, we obtain
\begin{equation}\label{3star}
\begin{split}
\sli_{D_j\subset B}\gamma(L_j)&\le C \gamma\big(\bigcup_{\J} E_j \cap B)\big) \le C_1 \sli_{\J}\gamma(E_j\cap B)\\
& \le C_2 \sli_{\J}\gamma(L_j) \le C_3 \gamma\big(\bigcup_{\J} L_j\cap B\big).
\end{split}\end{equation}

Now combine \eqref{zero} and \eqref{3star} to get
\begin{equation}
\label{12a}
\gamma\big(\bigcup_{} L_j\cap B\big) \ge \gamma\big(\bigcup_{\J} L_j\cap B\big) \ge  c \sli_{D_j\subset B}\gamma(L_j)  + c \, \sli_{\J} \gamma(L_j)=c \sli_{D_j\cap B\ne\emptyset}\gamma(L_j)\,.
\end{equation}
Obviously,
\begin{equation}
\label{12b}
\gamma\big(\bigcup_{} L_j\cap B\big) \ge \gamma\big(\bigcup_{\J_1} L_j\cap B\big),\quad \J_1:=\{j:D_j\cap B=\emptyset,\ L_j\cap B\not=\emptyset\}.
\end{equation}
For $j\in\J_1$ we again consider the new dilation constants $\la_{new}:=\frac{\la}{\la'}$, and
discs $D_j':=\la' D_j$. The  discs $\la_{new}D_j'$, $j\in\J_1$, are disjoint, and $D_j'$ intersects $B$ for $j\in \J_1$. By Corollary \ref{circle1}  applied to $L_j\cap B$, $j\in \J_1$, playing the roles of $E_j$, we get
\begin{equation}
\label{12c}
\gamma\big(\bigcup_{\J_1} L_j\cap B\big) \ge c\sli_{\J_1}\gamma(L_j\cap B)\,.
\end{equation}
The combination of \eqref{12a}--\eqref{12c}  finishes the proof.
\end{proof}

\begin{lemma}[The second case]
\label{secondcase}
Suppose that $\gamma(F)\ge \varepsilon \gamma(E\cap B)$ with $\varepsilon$ from the previous lemma. Then there exists a universal constant $c$ such that
$$
\gamma\big(\bigcup L_j \cap B\big) \ge c\sli \gamma(L_j\cap B).
$$
\end{lemma}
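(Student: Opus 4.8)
The plan is to reduce the ``second case'' to the information we already know about the sets $F_j=E_j$ for $D_j\subset B$, where the hypothesis $\gamma(F)\ge\varepsilon\gamma(E\cap B)$ gives us a genuine lower bound on the capacity concentrated in the interior discs. First I would observe that since $F=\bigcup_{D_j\subset B}E_j$ and each $E_j$ with $D_j\subset B$ lies in $D_j\subset B$, the measure $\mu|_F$ is a sub-measure of $\mu$ supported in $B$, and the hypothesis \eqref{mainc} of Theorem \ref{main1} together with extremality of the $\mu_j$ gives
$$
\sum_{j:D_j\subset B}\gamma(E_j)\asymp\sum_{j:D_j\subset B}\mu_j(D_j)=\mu(F)\le\mu(B)\le C_0\gamma(E\cap B)\le \frac{C_0}{\varepsilon}\,\gamma(F).
$$
Thus in this case the interior pieces $E_j$, $D_j\subset B$, are themselves almost-additive: $\sum_{D_j\subset B}\gamma(E_j)\le C\gamma(F)=C\gamma(\bigcup_{D_j\subset B}E_j)$. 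The point of the lemma is to transfer this almost-additivity from the sets $E_j$ to the circle-bundles $L_j$.

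Next I would invoke Theorem \ref{comp} (the comparability result from \cite{NToV1}) with the roles of $\nu$ and $\sigma$ played, roughly, by a measure adapted to the $E_j$'s and one adapted to the $L_j$'s. Concretely: on each interior disc $D_j$ ($D_j\subset B$) we have two comparable quantities, $\gamma(E_j)$ and, by construction \eqref{eq31}, $\mathcal H^1(L_j)=A_\la(N+1)\gamma(E_j)\asymp\gamma(L_j)$ (using $\gamma(L_j)\asymp\mathcal H^1(L_j)$). Let $\nu_j$ be the extremal measure for $E_j$ (so $\nu_j(D_j)\asymp\gamma(E_j)$, $\|\mathcal C_{\nu_j}\|\le 1$) and let $\sigma_j$ be, say, the normalized arclength measure on $L_j$ scaled so that $\sigma_j(D_j)=\mathcal H^1(L_j)\asymp\gamma(E_j)\asymp\nu_j(D_j)$; the Cauchy operator on each circle is bounded, so $\mathcal C_{\sigma_j}$ are uniformly bounded. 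Summing over $D_j\subset B$ and applying Theorem \ref{comp} (with the disjoint dilates $\la D_j$), the almost-additivity of $\{E_j\}$ just established — which via \eqref{gop} says $\nu:=\sum_{D_j\subset B}\nu_j$ is, up to a constant, a Cauchy operator measure on $F$ — upgrades to: $\sigma:=\sum_{D_j\subset B}\sigma_j$ is a Cauchy operator measure, hence by \eqref{gop} again $\gamma\big(\bigcup_{D_j\subset B}L_j\big)\gtrsim\sigma\big(\bigcup L_j\big)=\sum_{D_j\subset B}\mathcal H^1(L_j)\asymp\sum_{D_j\subset B}\gamma(L_j)$.

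Having $\gamma\big(\bigcup_{D_j\subset B}L_j\big)\asymp\sum_{D_j\subset B}\gamma(L_j)$, I would finish exactly as in Lemma \ref{firstcase}: apply Lemma \ref{cutfrominsidediscs} to replace $\bigcup_{D_j\subset B}L_j$ by $\bigcup_{D_j\subset B}L_j\cap B$; handle the boundary indices $j\in\J$ via Lemma \ref{cutfromboundarydiscs} and Corollary \ref{circle1} (noting, as there, that if $B$ meets only one $\la D_j$ there is nothing to prove, so $B$ meets at least two and each $D_j$ meeting $B$ contains a circle of $L_j$ inside $B$); and treat the indices $\J_1=\{j:D_j\cap B=\emptyset,\ L_j\cap B\ne\emptyset\}$ by Corollary \ref{circle1} with the dilation constant $\la/\la'$ as in \eqref{12b}--\eqref{12c}. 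Combining these, $\gamma\big(\bigcup L_j\cap B\big)\ge c\sum_j\gamma(L_j\cap B)$.

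I expect the main obstacle to be the correct bookkeeping in the application of Theorem \ref{comp}: one must be careful that the measure $\nu$ being a ``Cauchy operator measure up to a constant'' really is supplied by the almost-additivity of the $E_j$'s together with the extremality assumption and \eqref{gop}, and that the comparability constants $c_1,c_2$ relating $\nu(D_j)$ and $\sigma(D_j)$ are uniform in $j$ and in $B$ — this last uniformity is what makes the final constant $c$ independent of $B$, as promised in the Remark preceding Lemma \ref{firstcase}. The geometric lemmas (\ref{cutfrominsidediscs}, \ref{cutfromboundarydiscs}, \ref{discinside}) and Corollary \ref{circle1} then do the rest routinely, just as in the first case.
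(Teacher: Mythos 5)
Your reduction to the interior discs and your endgame (Lemma \ref{cutfrominsidediscs}, Corollary \ref{circle1} for the boundary indices $\J$ and the exterior indices $\J_1$, and $\ell_j\asymp\gamma(L_j\cap B)$) match the paper. The gap is in the middle step, where you invoke Theorem \ref{comp}. You take $\nu_j$ to be the individual extremal measure of $E_j$ and assert that the almost-additivity $\sum_{D_j\subset B}\gamma(E_j)\le C\gamma(F)$ ``via \eqref{gop} says $\nu:=\sum_{D_j\subset B}\nu_j$ is, up to a constant, a Cauchy operator measure on $F$.'' It does not. What \eqref{gop} supplies is the existence of \emph{some} measure supported on $F$ with total mass $\asymp\gamma(F)$ and bounded Cauchy operator; it says nothing about the particular measure $\sum_j\nu_j$ obtained by summing the individual extremizers. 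The boundedness of $\mathcal{C}_{\sum_j\nu_j}$ is essentially the conclusion of Theorem \ref{main1} restricted to $B$, so assuming it at this point is circular; and Example \ref{ex1} shows that a sum of uniformly bounded pieces can fail to give a bounded Cauchy operator even when a capacity inequality of the type \eqref{mainc} holds, so no soft argument will rescue the claim.

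The fix --- which is what the paper does --- is to run the comparison the other way around: take $\nu$ to be the extremal measure for the \emph{whole} set $F$ given by \eqref{gop} (so $\|\mathcal{C}_\nu\|_\nu\le1$ and $\|\nu\|\asymp\gamma(F)$ come for free) and set $\nu_j:=\chi_{F_j}\nu$. Now the Cauchy bound holds, but the two-sided comparability $\nu_j(D_j)\asymp\ell_j$ required by Theorem \ref{comp} may fail for individual $j$: one only has $\|\nu_j\|\le C_2\ell_j$ from above. This is exactly where the almost-additivity you derived is used: a Tchebychev-type selection shows that the ``good'' indices with $\|\nu_j\|\ge\tau\ell_j$ carry a fixed proportion of $\sum_{D_j\subset B}\ell_j$, and Theorem \ref{comp} is applied only to the good part, comparing $\nu_g$ with arclength on the central circles $C_j$. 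Then $\gamma\big(\bigcup_{D_j\subset B}L_j\big)\ge\gamma\big(\bigcup_{j\ \text{good}}L_j\big)\ge c\,\|\sigma_g\|\ge c'\sum_{D_j\subset B}\ell_j$, and the conclusion follows. Without this selection step your application of Theorem \ref{comp} has its hypotheses backwards: you have the mass comparability for all $j$ but not the boundedness of the reference Cauchy operator.
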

\begin{proof}
By Theorem \ref{sup} or rather Corollary \ref{circle1} we need only to prove the inequality
\begin{equation}
\label{inside}
\gamma\big(\bigcup_{D_j\subset B} L_j\cap B\big) \ge c \sli_{D_j\subset B}\gamma(L_j\cap B).
\end{equation}
Using the assumption of our lemma as well as the conditions \eqref{mainc} and $\|\mu_j\|\asymp\g(E_j)$ of Theorem~\ref{main1}, we get
\begin{equation}\label{reduction}
\gamma(F)\ge \varepsilon \gamma (E\cap B) \ge \varepsilon c\mu(B) \ge \varepsilon c \sli_{D_j\subset B}\mu_j(B) \ge \varepsilon c' \sli_j \gamma(F_j).
\end{equation}

By $\nu$ we denote the measure on $F$ participating in \eqref{gop} for which $\|\nu\| \asymp \gamma(F)$. Denote $d\nu_j = \chi_{F_j}d\nu$. Then $\mathcal{C}_{\nu_j}$ is bounded on $L^2(\nu_j)$ (with norm at most $1$), and \eqref{gop} yields the estimate
$$
\|\nu_j\| \le C \gamma(F_j) \le C_1 \gamma(L_j) \le C_2 \mathcal{H}^1(L_j) =: C_2 \ell_j.
$$

We call $j$ {\it good} if $D_j\subset B$ and $\|\nu_j\| \ge \tau \ell_j$. The choice of $\tau$ will be clear from the next steps. However, we want to emphasize now that this choice will be universal. By \eqref{reduction} we have:
\begin{align*}
\varepsilon c' A^{-1}\sli \gamma(F_j) &\le A^{-1}\gamma(F) \le \|\nu\| = \sli \|\nu_j\| \\
&\le C_2 \sli_{j \text{ is good}} \ell_j + \tau \sli_{D_j\subset B} \ell_j \le C_2\sli_{j \; \text{is good}} \ell_j +C_3\tau\sli \gamma(F_j)
\end{align*}
(in the last inequality we use \eqref{eq31}). Therefore,
\begin{equation}
\label{goodj}
\sli\ci{j \; \text{is good, }D_j\subset B} \ell_j \ge c \sli \gamma(F_j)\ge c_1 \sli_{D_j\subset B} \ell_j.
\end{equation}
Actually, $\tau$ is chosen exactly here. We see that it indeed depends only on universal constants such as $A$ and $\varepsilon$. Recall that $C_j$ denotes the central circle of each $L_j$. We set
$$
d\sigma_g := \sli_{j \; \text{is good, }D_j\subset B} \chi_{C_j} d\mathcal{H}^1, \quad d\nu_g := \sli_{j \; \text{is good, }D_j\subset B} d\nu_j\,.
$$

Then for good $j$, $\sigma_g(D_j) =\mathcal{H}^1(C_j)\asymp \mathcal{H}^1(L_j) =\ell_j\asymp \nu_g(D_j)$. In the last relation the comparison constants can depend on previous universal constants and $\tau$. Operators $\mathcal{C}_{\sigma_g|D_j}$ are uniformly bounded. By the choice of $\nu$, the operator $\mathcal{C}_{\nu_g}$ is bounded as well with norm at most $1$. Thus, we may apply Theorem \ref{comp} and conclude that $\mathcal{C}_{\sigma_g}$ is also bounded with a certain absolute bound of the norm. Therefore, using \eqref{goodj}, we get
$$
\gamma\big(\bigcup_{D_j\subset B} L_j\big) \ge \gamma\big(\bigcup_{j \text{ is good, }D_j\subset B} L_j\big) \ge c \|\sigma_g\| \ge c_1 \sli_{j \text{ is good, }D_j\subset B} \ell_j \ge c_2 \sli_{D_j\subset B} \ell_j.
$$

In Lemma \ref{cutfrominsidediscs} we have proved that
$$
\gamma\big(\bigcup_{D_j\subset B} L_j\big) \asymp \gamma\big(\bigcup_{D_j\subset B} L_j\cap B\big).
$$
Moreover, for every $j$ such that $D_j\subset B$, we have
$$
\ell_j=\mathcal{H}^1(L_j)\asymp \mathcal{H}^1(L_j\cap B)\asymp \gamma(L_j\cap B).
$$
Thus, we obtain \eqref{inside}, and Lemma \ref{secondcase} is proved.
\end{proof}

The main Theorem of \cite{NV} says:

\begin{theorem}
\label{h1}
Let $L\subset\R^2$, be a compact set of positive and finite Hausdorff measure $\mathcal{H}^1$, and let $\sigma=\mathcal{H}^1|L$. Then $\mathcal{C}_{\sigma}$ is bounded if and only if there exists a finite constant $C_0$ such that $\sigma(B\cap L) \le C_0 \gamma(B\cap L)$ for any disc $B$.
 \end{theorem}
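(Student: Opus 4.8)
The plan is to prove the two implications separately; the ``only if'' direction is short and reduces to what is already done in Subsection~\ref{s31}, while the ``if'' direction carries all the weight. For the easy direction: if $\|\mathcal{C}_\sigma\|_\sigma\le N<\infty$, restricting test functions to a disc $B$ and extending them by zero shows $\|\mathcal{C}_{\sigma|_B}\|_{\sigma|_B}\le N$, and boundedness of a Cauchy operator forces linear growth $\sigma(D(x,r))\le C(N)\,r$. Hence for a suitable $c=c(N)>0$ the measure $c\,\sigma|_B$ lies in $\Sigma$ with $\|\mathcal{C}_{c\sigma|_B}\|_{c\sigma|_B}\le 1$, so it is admissible in \eqref{gop} with $F=B\cap L$, and we obtain $\sigma(B\cap L)\le C_0\gamma(B\cap L)$ with $C_0=C_0(N)$, uniformly in $B$.

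For the hard direction, assume $\sigma(B\cap L)\le C_0\gamma(B\cap L)$ for every disc $B$. Taking $B=D(x,r)$ and using $\gamma(B\cap L)\le\gamma(D(x,r))=r$ gives linear growth of $\sigma$ for free. By the $T1$-type theorem for the Cauchy kernel on measures of linear growth (see \cite{Tolsa-book, Volberg}), it then suffices to prove the Melnikov--Menger curvature estimate $c^2(\sigma|_B)\le C\,\sigma(B)$ for every disc $B$, with $C$ independent of $B$. Here I would exploit Tolsa's equivalence $\gamma\asymp\gamma_+$, that is, \eqref{gop}: for a fixed $B$ it produces a positive measure $\nu\in\Sigma$ with $\supp\nu\subset B\cap L$, $\|\mathcal{C}_\nu\|_\nu\le 1$, and $\|\nu\|\ge a\,\sigma(B\cap L)$ for an absolute $a>0$. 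The Melnikov--Verdera identity, together with the linear growth of $\nu\in\Sigma$, bounds $c^2(\nu)\lesssim(1+\|\mathcal{C}_\nu\|_\nu^2)\,\|\nu\|\lesssim\sigma(B)$. Thus on $B$ the measure $\sigma$ has mass comparable to a measure with small curvature, and the task is to transfer ``small curvature'' from $\nu$ to $\sigma|_B$.

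The transfer would go through a stopping-time (corona) decomposition of $\sigma|_B$ against $\nu$ in a dyadic lattice adapted to $\sigma$: the first-generation stopping cubes $\{Q_i\}$ are the maximal subcubes of $B$ on which the $\sigma$-to-$\nu$ density ratio exceeds a large fixed constant $M$. On the non-stopping part of $B$ the measure $\sigma$ is dominated, scale by scale, by $M\nu$, so the portion of $c^2(\sigma|_B)$ coming from triples $(x,y,z)$ no two of which lie in a common $Q_i$ is $\lesssim M^3c^2(\nu)\lesssim\sigma(B)$. One then iterates inside each $Q_i$, applying the hypothesis to $B=Q_i$ to extract a fresh good measure $\nu^{Q_i}$ with $\|\nu^{Q_i}\|\gtrsim\sigma(Q_i)$, and so on. Since $\mathcal{H}^1(L)<\infty$ the stopping cubes shrink to points, and a Borel--Cantelli/density argument shows that the set of points lying in a stopping cube of every generation is $\sigma$-null; hence the good parts over all generations partition $B$ modulo a $\sigma$-null set, and summing the per-cube estimate telescopes to $c^2(\sigma|_B)\lesssim\sum_k\sum_{Q\in\text{gen }k}\sigma(\text{good part of }Q)=\sigma(B)$, up to ``cross'' terms.

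The step I expect to be the main obstacle is precisely the ``cross'' part of the triple integral $c^2(\sigma|_B)$: the triples whose three points are spread across distinct cubes of the stopping family. These should be handled by the classical bound for $R(x,y,z)^{-2}$ in terms of the squared distance from the farthest of the three points to the line through the other two, combined with the linear growth of $\sigma$, so that the cross contributions sum geometrically in the generation index; making this geometric decay quantitative and uniform in $B$, and verifying the $\sigma$-null statement for the ``stops at every generation'' set, is where the genuine work lies. Everything else is bookkeeping built on \eqref{gop}, the Melnikov--Verdera identity, and the curvature characterization of $L^2(\sigma)$-boundedness of $\mathcal{C}_\sigma$.
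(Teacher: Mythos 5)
Your ``only if'' direction is fine and matches what the paper does in Subsection~\ref{s31}. But you should first note that the paper does not prove Theorem~\ref{h1} at all: it is quoted verbatim as the main theorem of \cite{NV} and used as a black box. You are therefore attempting to reconstruct the main result of Nazarov--Volberg from scratch, and your sketch of the hard direction has a genuine gap at its central step. After the (correct) reduction via the curvature $T1$ theorem to proving $c^2(\sigma|_B)\le C\sigma(B)$, and after extracting from \eqref{gop} a measure $\nu\in\Sigma$ on $B\cap L$ with $\|\nu\|\gtrsim\sigma(B\cap L)$ and $c^2(\nu)\lesssim\|\nu\|$, you claim that on the non-stopping part of the corona decomposition ``$\sigma$ is dominated, scale by scale, by $M\nu$, so the portion of $c^2(\sigma|_B)$ coming from such triples is $\lesssim M^3c^2(\nu)$.'' This implication is false as stated. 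Curvature is not monotone under scale-by-scale domination of density ratios: $c^2(\mu)\le c^2(\mu')$ requires pointwise domination $\mu\le\mu'$ of measures, whereas bounded ratios $\sigma(Q)/\nu(Q)\le M$ over dyadic cubes say nothing about where inside each cube the two measures sit, and $1/R^2(x,y,z)$ is governed precisely by that fine positional information (the distance from one point to the line through the other two, normalized by the diameter of the triple). Two measures on the same set $B\cap L$ with comparable mass in every dyadic cube above the stopping scales can have wildly different curvatures --- e.g.\ $\nu$ concentrated on a rectifiable portion and $\sigma$ spread over a purely unrectifiable one.

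This is not a bookkeeping issue but the crux of the theorem. In the actual arguments of this type (the corona construction on pp.~125--146 of \cite{Tolsa-sem}, which the paper's Remark~3 explicitly flags as ``far from being trivial,'' and the argument of \cite{NV}), the transfer of smallness of curvature from the auxiliary measure to $\sigma$ on each tree of the corona is achieved by \emph{constructing an AD-regular curve} (or Lipschitz graph) approximating the support within that tree --- which requires a David--L\'eger-type rectifiability input from $c^2(\nu)\lesssim\|\nu\|$ --- and then comparing $\sigma$ on the tree with arc length on that curve. Your proposal contains no substitute for this construction; both the ``diagonal-free'' part and the cross terms you defer depend on it. So the proposal identifies a plausible architecture (corona against the extremal measure from $\gamma\asymp\gamma_+$) but does not supply the idea that makes it work, and the one quantitative claim it does make at the key step is unjustified.
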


Starting with  the main assumption of Theorem \ref{main2} (the inequality $\mu(B)\le C_0 \gamma(B\cap E)$ for any disc $B$) we proved in Lemmas \ref{firstcase}, \ref{secondcase} that the  uniform in $B$ almost-additivity of $\gamma$ holds for the union of all sets $\{L_j\cap B\}$. Namely, we proved that the following holds for any $B$ with uniform positive $c_2$:
\begin{equation}
\label{Lj}
\gamma(B\cap L) \ge c_1 \sum_j\gamma( B\cap L_j) \ge c_2\sum_j \sigma (B\cap L_j) =c_2\sigma(B\cap L)\,,
\end{equation}
where $\sigma:= \mathcal{H}^1|L$.
Hence the measure $\sigma$ satisfies Theorem \ref{h1}. So the boundedness of Cauchy operator on the union of circles is obtained. The measures $\sigma|L_j$ and $\mu_j$ are supported on $\la' D_j$, the discs $\frac{\la}{\la'}\cdot(\la' D_j)$ are disjoint, and  $\sigma(L_j)\asymp\mu_j$ (see \eqref{eq31}). We may apply Theorem \ref{comp} again to establish the boundedness of $\mathcal{C}_{\mu}$ in $L^2(\mu)$.

\section{Proof of Theorem \ref{main2} and Corollary \ref{geom} }\label{m2}

\begin{proof}[Proof of Theorem \ref{main2}] We would like to reduce the assumptions of Theorem \ref{main2} to conditions of Theorem \ref{main1}.
For every $E_j$, choose an ``extremal'' measure $\mu_j'$, supported on $E_j$ and such that $\|\mathcal{C}_{\mu_j'}\|_{\mu_j'} \le 1$, and $c'_1\mu_j'(E_j)\le \gamma(E_j)\le c'_2 \mu_j'(E_j)$, $0<c'_1<c'_2<\infty$ (the existence of such measures $\mu_j'$ follows from \eqref{gop}).

Now let us consider new measures
$$
\wt\mu_j:= \mu_j +\mu_j'\,,\quad \wt\mu=\sum_j \wt\mu_j\,.
$$
We prove that these new measures satisfy all assumptions of Theorem \ref{main1} (up to an absolute constant).
Indeed, we saw in Section \ref{s31} that the assumption $\|\mathcal{C}_{\mu_j}\|_{\mu_j} \le 1$ implies the inequality $\mu_j(E_j)\le C\gamma(E_j)$. Hence, the new measures $\wt\mu_j$ also satisfy the ``extremality" condition ($\mu_j$ did not satisfy it in general):
$$
c_1\wt\mu_j(E_j)\le \gamma(E_j)\le c_2 \wt\mu_j(E_j),\quad 0<c_1<c_2<\infty.
$$
Moreover, since two Cauchy operator measures are always Cauchy independent (see Section \ref{inf}), $\|\mathcal{C}_{\wt\mu_j}\|_{\wt\mu_j} \le C$. Now the relation \eqref{gop} implies that $\wt\mu(B\cap E_j) = \wt\mu_j(B\cap E_j)\le C \gamma(B\cap E_j)$ for every disc $B$. Therefore, by \eqref{almadd} we have
$$
\wt\mu(B) = \sum\wt\mu(B\cap E_j)\le C \sli \gamma(B\cap E_j) \le C_0 \gamma(B\cap E),\quad C_0=C_0(C_1).
$$
where the latter is the condition \eqref{mainc} of Theorem \ref{main1} for $\wt\mu$.

We apply Theorem \ref{main1} to $c\wt\mu_j$ and $c\wt\mu=c\sum_j\wt\mu_j$ with a sufficiently small absolute constant $c>0$, and get that
$\|\mathcal{C}_{\wt\mu}\|_{\wt\mu}\le C(C_1, \la)$. But $\mu=\sum_j\mu_j$ is a part of $\mu$, and hence $\|\mathcal{C}_{\mu}\|_{\mu}\le C(C_1, \la)$.
\end{proof}

\begin{proof}[Proof of Corollary \ref{geom}]
Recall the reader that above we built $L_j$ for each $D_j$, and each $L_j$ contains a certain ``central circle". Let $\mathcal T'$ be the union over $j$ of central circles in $L_j$. By \eqref{eq31}, the radii of these circles are comparable with $r_j$. Since $\mathcal T$ is AD regular and discs $D_j$ are $\la$-separated, $\mathcal T'$ is AD regular as well (with another constants $c,C$). We have proved in the previous section that the Cauchy operator $\mathcal{C}_{\sigma}$, $\sigma:= \mathcal{H}^1|L$, is bounded from $L^2(\sigma)$ to itself. Hence, the operator $\mathcal{C}_{\sigma'}$, $\sigma':= \mathcal{H}^1|\mathcal T'$, is bounded too. By the theorem of Mattila, Melnikov and Verdera \cite{MMV}, the set $\mathcal T'$ is contained in an AD regular curve.
\end{proof}

\section{Examples}\label{sh}

We saw in Section \ref{m1} that the condition
\begin{equation}\label{mug}
\mu(B)\le C_0 \gamma(B\cap E) \text{\ \ for every disc } B
\end{equation}
is necessary for the boundedness of the Cauchy operator $\mathcal{C}_\mu$ with any Borel measure $\mu$. It is not difficult to see that this condition alone is not enough for the boundedness of $\mathcal{C}_\mu$. Indeed, let $\mu_n^{1/4}$ be the probability measure uniformly distributed on the set $E_n^{1/4}$ defined in Introduction. Let $\mu^{1/4}$ be the weak limit of some weakly convergent subsequence $\{\mu_{n_k}^{1/4}\}$, $E^{1/4}=\bigcap E_n^{1/4}$, $E$ is the initial unit square, and $\mu:=\mu^{1/4}+\mathcal H^2|E$. Then $\mu$ satisfies \eqref{mug}, but $\mathcal{C}_\mu$ is unbounded -- see for example \cite{MT, MTV}. We are going to demonstrate more: in general the condition \eqref{mug} is not sufficient for the boundedness even if $\mu$ consists of  countably many pieces, and each of them gives a bounded Cauchy operator.

\begin{example}\label{ex1} There exists a family of measures $\{\mu_{j}\}_{j=0}^{\infty}$, supported on squares $E_j$, with the following properties: (a) $\|\cC_{\mu_j}\|_{\mu_j}\le1$; (b) $\|\mu_j\| \asymp\gamma(E_j)$; (c) $2E_j\cap2E_k=\emptyset$, $j\ne k$, \ $j,k\ge1$; (d) the measure $\mu=\sum_{j=0}^{\infty} \mu_j$ satisfies \eqref{mug}; (e) $\|\cC_{\mu}\|_{\mu}=\infty$.
\end{example}

\begin{proof} We use the idea of David-Semmes (see \cite[Example 8.7]{VE} for more detailed exposition). Let $N_0=0$, and let $\{N_{k}\}_{k=0}^{\infty}$ be a sequence of natural numbers such that $N_{k+1}-N_k\to\infty$ as $k\to\infty$. Start the construction with the unit square $E_0$ and make $N_1-N_0$ steps of the construction of the corner 1/4-Cantor set $E^{1/4}$. We get $4^{N_1-N_0}$ squares with side length $4^{-N_1}$. Choose one (any) of them, denote it by $Q_1$, and continue the construction only with this square. Other $4^{N_1-N_0}-1$ squares are the sets $E_j$ which already have been defined. For the chosen square $Q_1$ we make next $N_2-N_1$ steps of the construction of $E^{1/4}$, obtaining $4^{N_2-N_1}$ squares with side length $4^{-N_2}$. Again, continue the construction only for one of them, say, for a square $Q_2$, and so on.

Let $\mu_j$, $j=0,1,\dots$, be the 2-dimensional measure uniformly distributed on $E_j$ such that $\|\mu_{j}\|=c\ell_j$, where $\ell_j$ is the side length of $E_j$, and the absolute constant $c$ is chosen in such a way that $\|\cC_{\mu_j}\|_{\mu_j}=1$. Then properties (a), (b), (c) are obvious. To demonstrate (d) we notice that $E:=\bigcup_{j\ge 0} E_j$ is equal to $E_0$, and thus $\gamma(B\cap E) \asymp\diam(B\cap E)=:d_0$. On the other hand, for any $j\ge0$ and $d_j:=\diam(B\cap E_j)$, we have $\mu(B\cap E_j)\le c\ell_j^{-1}d_j^2<Cd_j$ (the density of $\mu_j$ is equal to $c/\ell_j$). Hence, $\mu(B\cap E)<C\sum_{j=0}^\infty d_j\asymp d_0$, and (d) is established.

Finally, to prove (e), we apply the operator $\mathcal{C}_\mu$ to the characteristic functions
$\chi_{Q_k}$, $k=0,1,\dots$ We have
$$
\|\cC_\mu(\chi_{Q_k})\|_{L^2(\mu)}=\|\cC_{\mu|Q_k}({\bf1})\|_{L^2(\mu)}\ge \|\cC_{\mu|Q_k}({\bf1})\|_{L^2(\mu|Q_k)}.
$$
But
$$
\|\cC_{\mu|Q_k}({\bf1})\|_{L^2(\mu|Q_k)}^2\ge c(N_{k+1}-N_k)4^{-N_k}
$$
with an absolute constant $c$ -- see \cite{MT}. Hence, $\|\cC_\mu\|_\mu\ge c(N_{k+1}-N_k)\to\infty$, and (e) is proved.
\end{proof}

Remark that the measures $\{\mu_{j}\}_{j=1}^{\infty}$ satisfy all assumptions of Theorem \ref{main1} except \eqref{mug}. Therefore, we have to add $\mu_0$ and change the structure of $\mu$.
\smallskip

Now we demonstrate that the condition $\|\mu_j\|\asymp \gamma(E_j)$ in Theorem \ref{main1} is essential.

\begin{example}\label{ex2} There exists a family of measures $\{\mu_{j}\}_{j=1}^{\infty}$ with the following properties: (a) $\|\cC_{\mu_j}\|_{\mu_j}\le1$; (b) $\|\mu_j\| \le C\gamma(E_j)$, where $E_j=\supp\mu_j$, and $E_j$ is either a square or a disc; (c) $2E_j\cap2E_k=\emptyset$, $j\ne k$; (d) the measure $\mu=\sum_{j=1}^{\infty} \mu_j$ satisfies \eqref{mug}; (e) $\|\cC_{\mu}\|_{\mu}=\infty$.
\end{example}

\begin{proof} We use the same construction as in Example \ref{ex1}, but with the following modifications. 1. The initial square $E_0$ now is not included into the collection $\{E_j\}$ of squares; thus, the squares are separated. 2. Besides the same squares $E_j$ and measures $\mu_j$, as in Example \ref{ex1}, we add additional discs $\widetilde E_j$ and measures $\widetilde \mu_j$ to satisfy \eqref{mug} (otherwise \eqref{mug} does not hold after exclusion $E_0$).

As before, set $N_0=0$, choose a sequence $\{N_{k}\}_{k=0}^{\infty}$ of natural numbers such that $N_{k+1}-N_k\to\infty$ as $k\to\infty$, and make $N_1-N_0$ steps of the construction of the corner 1/4-Cantor set $E^{1/4}$ starting with the unit square $E_0$. Let $E_{n,k}$, $k=0,\dots,4^n$, be the squares forming the $n$th generation in this construction (not all of them will be included into $\{E_j\}$). In each square $E_{n,k}$, $n=0,\dots,N_1$, $k=1,\dots,4^n$, place the disc $\widetilde D_{n,k}$ concentric with $E_{n,k}$, and with radius $\ell_n/10=4^{-n}/10$. On $\widetilde D_{n,k}$ we uniformly distribute a measure $\mu_{n,k}$ with $\|\mu_{n,k}\|=2^{-n}\cdot4^{-n}$. Then for $n$ bigger than a certain absolute $n_0$ we automatically have $\|\cC_{\mu_{n,k}}\|_{\mu_{n,k}}\le1$. For $n\le n_0$ we might need a small absolute positive  $c'$  such that $\|\cC_{c'\mu_{n,k}}\|_{c'\mu_{n,k}}\le1$.  We put then $\widetilde \mu_{n,k}:= c'\mu_{n,k}$ and achieve that  always $\|\cC_{\widetilde \mu_{n,k}}\|_{\widetilde\mu_{n,k}}\le1$. After $N_1-N_0$ steps choose one (any) of the squares $E_{N_1,k}$, and denote it by $Q_1$. Other $4^{N_1-N_0}-1$ squares $E_{N_1,k}$ and discs $\widetilde D_{n,k}$, $n=0,\dots,N_1$, $k=1,\dots,4^n$, are the sets $E_j$ which already have been defined. As before, $\mu_{N_1,k}$ is the 2-dimensional measure uniformly distributed on $E_{N_1,k}$ such that $\|\mu_{N_1,k}\|=c\ell_{N_1}$ and $\|\cC_{\mu_{N_1,k}}\|_{\mu_{N_1,k}}=1$.

For the chosen square $Q_1$ we continue the construction and make the next $N_2-N_1$ steps of the construction of $E^{1/4}$, obtaining the squares $E_{N_2,k}$, $k=1,\dots,4^{N_2-N_1}$, with side length $\ell_{N_2}:=4^{-N_2}$ and with measures $\mu_{N_2,k}$ such that $\|\mu_{N_2,k}\|=c\ell_{N_2}$. Besides these squares, we get discs $\widetilde D_{n,k}$, $n=N_1,\dots,N_2$, $k=1,\dots,4^{n-N_1}$ of radii $4^{-n}/10$, concentric with $E_{n,k}$ and supporting the measures $\widetilde \mu_{n,k}$, $\|\widetilde \mu_{n,k}\|=c'2^{-n}\cdot4^{-n}$. Again, continue the construction only for one of them, and so on.

We have to prove only (d). Fix a disc $B$. Suppose that $\widetilde D_{0,1}\subset B$. Since $\mu(B)\le\|\mu\|\le C$ and $\g(B\cap E)\ge \g(\widetilde D_{0,1})=1/10$, \eqref{mug} holds. Suppose now that $\widetilde D_{0,1}\not\subset B$, and that $B$ contains at least one disc $\widetilde D_{n,k}$. Choose a maximal disc in $B$, say, $\widetilde D_{n_1,k_1}$. Then ``the parent'' $E_{n_1-1,k_1'}$ of the square $E_{n_1,k_1}$ (that is the square of the previous generation containing $E_{n_1,k_1}$) does not lie in $B$: otherwise $\widetilde D_{n_1,k_1}$ would not be a maximal disc in $B$. Set $G_1:=E_{n_1-1,k_1'}\cap B$. Now choose a maximal disc $\widetilde D_{n_2,k_2}\subset (B\setminus G_1)$ (if such a disc exist). Its ``parent'' $E_{n_2-1,k_2'}$ and the set $G_1$ are disjoint. Hence, $E_{n_2-1,k_2'}\not\subset B$. Set $G_2:=E_{n_2-1,k_2'}\cap B$. Continuing in such a way, we obtain a sequence of sets $G_j$ with the following properties. (i) $2G_i\cap2G_j=\emptyset$, $i\ne j$, and one may place them into $\lambda$-separated discs, $\la>1$. (ii) For each $G_j$,
$$
\mu(G_j)\le C\ell_{n_j}=C4^{-n_j}\asymp\g(\widetilde D_{n_j,k_j})\le\g(G_j).
$$
(iii) All squares $E_{N_i,k}$ and discs $\widetilde D_{n,k}$ contained in $B$ are contained in $\cup_j G_j$.

Also, it might be some discs $\widetilde D_{n,k}$ and squares $E_{N_i,k}$ intersecting $B$. Each of them form a separate set $G_j:=\widetilde D_{n,k}\cap B$ or $E_{N_i,k}\cap B$. These sets are $\la$-separated as well, and
$$
\mu(G_j)\le C\diam(G_j)\le C'\g(G_j)
$$
(see the proof of Example \ref{ex1}). Moreover, all sets $G_j$ satisfy the conditions of Corollary \ref{circle1} which yields the estimate
$$
\mu(B)\le\sum_j\mu(G_j)\le C\sum_j\g(G_j)\le C'\g(B\cap E),
$$
and the proof is completed.
\end{proof}

Now we demonstrate the sharpness of Corollary \ref{geom}.
\begin{prop}\label{prop53}
Without any of assumptions (a)--(c) Corollary \ref{geom} is incorrect.
\end{prop}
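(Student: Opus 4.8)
The plan is to produce three counterexamples, one to show that each of the hypotheses (a), (b), (c) cannot be dropped from Corollary \ref{geom}, i.e. in each case we exhibit $\lambda$-separated discs $D_j=D(x_j,r_j)$ and sets $E_j\subset D_j$ satisfying the two remaining hypotheses but whose boundaries $\mathcal T=\bigcup_j\partial D_j$ (equivalently, the central circles) cannot be covered by any AD regular curve. The unifying idea is that AD regularity of a curve forces the centers $x_j$ to ``line up'' with the right density, so to break the conclusion we arrange the discs in a genuinely two-dimensional pattern (a Cantor-type cloud) while keeping whatever hypotheses we are allowed to keep.

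\begin{proof}[Proof of Proposition \ref{prop53}]
We exhibit three families of discs.

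\smallskip
\textbf{Dropping (c).} Take for $\{D_j\}$ a sequence of discs arranged along the corner $1/4$-Cantor set: at the $n$th generation we have $4^n$ squares $E_{n,k}$ of side $4^{-n}$, and we let $D_{n,k}$ be a disc of radius $r_{n,k}\asymp 4^{-n}$ centered at the center of $E_{n,k}$, shrunk slightly so that the $\lambda D_{n,k}$ are pairwise disjoint; set $E_{n,k}:=D_{n,k}$, so $\gamma(E_{n,k})=r_{n,k}$, which is (b). One checks (a), i.e. $\sum_j\gamma(B\cap E_j)\le C_1\gamma(B\cap E)$: for a disc $B$ the squares/discs contained in $B$ are organized into boundedly many ``generations seen in $B$'', and on the top seen generation $\gamma(B\cap E)\asymp \diam(B\cap E)$ dominates the sum by the same packing argument as in the proof of Example \ref{ex1} (combined with Corollary \ref{circle1} applied to the boundary discs). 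But $\mathcal T=\bigcup\partial D_j$ accumulates on the full self-similar Cantor set $E^{1/4}$, which has positive and finite $\mathcal H^1$ measure yet is purely unrectifiable, hence is \emph{not} contained in any AD regular (in particular any rectifiable) curve; so the conclusion of Corollary \ref{geom} fails, and (c) was used.

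\smallskip
\textbf{Dropping (a).} Take the same Cantor cloud of discs $D_{n,k}$ as above, with $E_{n,k}:=D_{n,k}$, so again (b) holds with $\gamma(E_{n,k})=r_{n,k}$, and now \emph{insist} on (c), i.e. take $\mathcal T$ to be AD regular --- this is automatic here because at each scale $4^{-n}$ there are exactly $4^n$ disjoint circles of radius $\asymp 4^{-n}$ filling the Cantor set in an AD-regular way, so $c\,r\le\mathcal H^1(\mathcal T\cap B(x,r))\le C\,r$. Exactly as in the previous paragraph, $\mathcal T$ is not contained in an AD regular curve. Hence without (a) the corollary is false.

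\smallskip
\textbf{Dropping (b).} Now keep (a) and (c) but violate (b). Arrange the discs $D_j$ again on the $1/4$-Cantor pattern with radii $r_j\asymp 4^{-n}$ at generation $n$ (so the $\lambda D_j$ are disjoint and $\mathcal T$ is AD regular, giving (c)), but inside each $D_j$ put a \emph{tiny} set $E_j$ --- for instance a single point, or a disc of radius $r_j^2$ --- so that $\gamma(E_j)\le r_j^2\ll r_j$. Then (b) fails. Condition (a) holds trivially: for every disc $B$ we may throw away all but one index $j$ and use $\gamma(B\cap E)\ge\gamma(E_{j})\ge c\sum_j\gamma(B\cap E_j)$ provided the $E_j$ are chosen, say, nested so that one of them contains the bulk of the capacity --- more simply, if all but finitely many $E_j$ are single points then $\sum_j\gamma(B\cap E_j)$ is just the contribution of the finitely many nondegenerate ones, bounded by $\gamma(B\cap E)$ up to Tolsa's semiadditivity. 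Yet $\mathcal T=\bigcup\partial D_j$ is the same unrectifiable Cantor cloud as before and is not contained in any AD regular curve. So (b) is essential as well.
\end{proof}

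The step I expect to require the most care is verifying the almost-additivity hypothesis \eqref{almadd} (condition (a)) in the ``dropping (c)'' example: one must show that along a genuinely two-dimensional arrangement of discs the scalar sum $\sum_j\gamma(B\cap E_j)$ is still controlled by $\gamma(B\cap E)$, which is precisely the packing/telescoping estimate used for property (d) in Examples \ref{ex1} and \ref{ex2}, and it has to be checked uniformly over all discs $B$ and over the two regimes $D_j\subset B$ versus $D_j$ straddling $\partial B$ (the latter handled by Corollary \ref{circle1}). The other two examples reuse this same estimate, so once it is in place the rest is routine: AD regularity of $\mathcal T$ is immediate from the self-similar count of circles, and non-containment in an AD regular curve follows because an AD regular curve is rectifiable while the accumulation set $E^{1/4}$ is purely $1$-unrectifiable of positive finite length.
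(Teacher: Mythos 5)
Your overall strategy (three configurations, each keeping two of the hypotheses and breaking the third) is the right one and is the same as the paper's, but all three of your concrete examples fail, and for the same underlying reason: the full self-similar corner-$1/4$ Cantor cloud (all $4^n$ discs of radius $\asymp 4^{-n}$, for every generation $n$) is incompatible with the hypotheses you claim it satisfies. First, with $E_{n,k}=D_{n,k}$ and $\gamma(E_{n,k})\asymp 4^{-n}$, each generation contributes $\asymp 4^n\cdot 4^{-n}=1$ to $\sum_j\gamma(B\cap E_j)$ when $B$ contains the unit square, so that sum diverges while $\gamma(B\cap E)$ is finite; even a single generation gives $\sum_k\gamma(E_{n,k})\asymp 1$ against $\gamma(\bigcup_k E_{n,k})\asymp n^{-1/2}$, which is exactly the failure of almost-additivity recalled in the introduction. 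So your ``dropping (c)'' example violates (a), and no packing or telescoping argument can rescue it. Second, $\mathcal T=\bigcup_{n,k}\partial D_{n,k}$ satisfies $\mathcal H^1(\mathcal T\cap E_{m,k'})=\sum_{n\ge m}4^{n-m}\cdot c\,4^{-n}=\infty$ for every generation-$m$ square, so the upper AD bound fails and $\mathcal T$ is not AD regular; your claim that regularity is ``automatic'' from the per-generation count is false, and hence your ``dropping (a)'' and ``dropping (b)'' examples both violate (c). None of the three configurations satisfies the two hypotheses it is supposed to keep.

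The paper avoids both traps. For the necessity of (a) and of (b) it uses the ``one-branch'' constructions of Examples \ref{ex1} and \ref{ex2}, in which only one square is refined at each stage; the discs present inside a given square then have total boundary length and total capacity forming a convergent geometric series, which is what makes (c) verifiable (and, in the (b) case, makes \eqref{almadd} checkable for the shrunken sets of radius $\frac{1}{20}4^{-j}$ decaying in the global index $j$). For the necessity of (c) it abandons Cantor constructions altogether --- necessarily so, since a Cantor cloud with $\gamma(E_j)\asymp r_j$ can never satisfy \eqref{almadd} --- and instead takes an $N\times N$ grid of discs of radius $\ell/N^2$ spaced $\ell/N$ apart: the discs are so small that $\sum_{i,j}\gamma(E_{ij})\asymp\ell\asymp\gamma(\bigcup E_{ij})$, giving (a), while any curve meeting all $N^2$ discs has length $\gtrsim\ell N$, which is incompatible with AD regularity inside a ball of radius $\asymp\ell$. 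Your argument that the conclusion fails (accumulation on the purely unrectifiable set $E^{1/4}$) is fine in principle, but it is moot until the hypotheses are actually met; to repair the proof you would need to replace all three configurations along these lines.
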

\begin{proof} {\bf1.} Suppose that the condition (a) in Corollary \ref{geom} is missed. Then we may use the same sets $\{E_{j}\}_{j=1}^{\infty}$ as in Example \ref{ex1}, only without the initial square $E_0$. As $D_j$ we take discs containing $E_j$ and slightly bigger than $E_j$. A proof of (b) and (c) is not difficult, and we leave it to the reader. At the same time,
\begin{equation}\label{f52}
(\text{length of any curve connecting all discs in }Q_k)/4^{-N_k}\to\infty\ \text{ as }k\to\infty
\end{equation}
(the squares $Q_k$ are defined in the proof of Example \ref{ex1}).

{\bf2.} Consider the case when (b) is missed. Let $D_j$ be the discs $\widetilde D_{n,k}$ in Example \ref{ex2}, enumerated in the order of non-increase of their radii $r_j$. In each $D_j$ we place the disc $E_j$ concentric with $D_j$ and of radius $\frac1{20}4^{-j}$. To prove (a), fix a disc $B$. For discs $E_j$ intersecting $\partial B$, \eqref{almadd} holds by Corollary \ref{circle1} (with $B\cap E_j$ as $E_j$). Let $j_0:=\{\min j: E_j\subset B\}$. Then
\begin{align*}
\sum_{j:B_j\subset B}\g(B\cap E_j)&\le\sum_{j=j_0}^\infty\g(E_j)=\sum_{j=j_0}^\infty\frac1{20}4^{-j}\\
&=\frac1{15}4^{-j_0}=\frac83\g(E_{j_0})\le\frac83\g(B\cap E),
\end{align*}
and (a) is proved. The proof of (c) is easy and we skip it. At the same time, \eqref{f52} holds for discs $D_j$ in $Q_k$.

{\bf3.} Finally, suppose that (c) is missed. The counterexample in this case is not based on Cantor-type constructions. Given $\ell>0$, $N\in\mathbb N$, $N\ge4$, consider the square $Q_{\ell}=[0,\ell]\times[0,\ell]$ and points
$$
x_i=\frac{\ell}{N-1}i,\quad y_j=\frac{\ell}{N-1}j,\quad i,j=0,1,\dots,N-1.
$$
Let $E_{ij}$ be discs centered at the points $(x_i,y_j)$ with radii $\ell/N^2$, and let $\cE:=\bigcup_{i,j}E_{ij}$, $D_{ij}:=2E_{ij}$. Fix a disc $B$. Set $\cE_B=\{\bigcup E_{ij}:E_{ij}\subset B\}$ ($\cE_B$ might be empty). If $\cE_B\ne\emptyset$, we have
$$
\bigg|\int\frac{d(\HH^1|\partial\cE_B)(\xi)}{\xi-z}\bigg|<C,\quad z\in\C.
$$
Hence,
$$
\g(\cE\cap B)\ge\g(\cE_B)\ge c\HH^1(\partial\cE_B)=c'\sum_{E_{ij}\subset B}\g(E_{ij})\ge c''\sum\g(E_{ij}\cap B).
$$
If $\cE_B=\emptyset$, \eqref{almadd} holds as well (for example, by Corollary \ref{circle1}).
At the same time, the length of any curve, intersecting all discs $D_{ij}$, is at least $C\ell N$.

Now we construct a series of squares $Q_{\ell_k}$ with $\ell_k=\frac1{10}2^{-k}$, $N_k=k\,2^k$, centered at points $1/k^2$, and the corresponding sets $\cE^{(k)}$ and discs $D_{ij}^{(k)}$, $E_{ij}^{(k)}$. One may place $\cE^{(k)}$ into $\la$-separated discs centered at points $1/k^2$. Set $E:=\bigcup_k\cE^{(k)}$ (that is $E$ is the union of all discs $E_{ij}^{(k)}$). By Theorem \ref{sup},
$$
\g(B\cap E)\ge c\sum_k\g(B\cap\cE^{(k)}).
$$
Thus, to prove \eqref{almadd}, we have to establish almost additivity of $\g$ for each $\cE^{(k)}$ separately, that was done above. The validity of (b) is obvious. But all discs $D_{ij}^{(k)}$ in $E$ cannot be connected by an AD regular curve.
\end{proof}

\section{Proof of Theorem \ref{superth}}
\label{supth}

It is known that a  compact chord-arc curve is a bi-lipschitz image of a straight segment, see \cite{Po}, Chapter 7. On the other hand  analytic capacity can be only finitely distorted by bi-lipschitz maps. This is a difficult result by X. Tolsa, \cite{To3}. So if we allow the separation constant $\la>1$ to depend on the Lipschitz constant of our chord-arc curve (so, the separation of the discs to be large if the constant of the curve is large), then we can obtain Theorem \ref{superth} directly from Theorem \ref{sup}.
However, we want to avoid the dependence of the separation constant on the chord-arc constant. Then we need another proof, which follows.

The Melnikov--Menger curvature of a positive Borel measure $\mu$ in $\C$ is defined as
$$
c^2(\mu)=\iiint\frac1{R^2(x,y,z)}\,d\mu(x)\,d\mu(y)\,d\mu(z),
$$
where $R(x,y,z)$ is the radius of the circle passing through points $x,y,z\in\C$, with $R(x,y,z)=\infty$ if $x,y,z$ lie on the same straight line (in particular, if two of these points coincide). This notion was introduced by Melnikov \cite{M}. The following relation characterizes the analytic capacity in terms of the curvature of a measure  \cite{Tolsa-sem}, \cite[p.~104]{To2}, \cite{Volberg}, \cite{Tolsa-book} : for any compact set $F$ in $\C$,
\begin{equation}\label{f61}
\g(F)\asymp\sup\{\mu(F):\ \supp\mu\subset F,\ \mu\in\Sigma,\ c^2(\mu)\le \mu(F)\},
\end{equation}
where $\Sigma$ is the class of measures of linear growth defined in \eqref{gop}.

\begin{lemma}[Main Lemma]\label{le61}
Let $D_j=D(x_j,r_j)$ be discs with centers on a chord-arc curve $\Gamma$, such that $\la D_j \cap \la D_k = \emptyset$, $j\not=k$, for some $\la>1$. Let $\mu_j$ be positive measures with the following properties: (1) $\supp\mu_j\subset D_j$; (2) $\mu_j(B_j)=:\|\mu_j\|\le r_j$. Then for $\mu=\sum\mu_j$ we have
\begin{equation}\label{f62}
c^2(\mu)\le \sum_j c^2(\mu_j)+C\|\mu\|,\quad C=C(\la,A_0),
\end{equation}
where $A_0$ is the constant of $\Gamma$.
\end{lemma}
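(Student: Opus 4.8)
The plan is to expand the Melnikov--Menger curvature of $\mu=\sum_j\mu_j$ over the discs and to show that everything except the ``diagonal'' contributes at most $C\|\mu\|$. Since $c^2(\nu)$ is a symmetric trilinear expression in $\nu$, one has
\[
c^2(\mu)=\sum_{i,j,k}\iiint_{D_i\times D_j\times D_k}\frac{d\mu_i(x)\,d\mu_j(y)\,d\mu_k(z)}{R(x,y,z)^2},
\]
and the terms with $i=j=k$ add up to exactly $\sum_jc^2(\mu_j)$; so it remains to bound the off-diagonal part by $C\|\mu\|$. Write $d_{ij}:=|x_i-x_j|$; by $\la$-separation $d_{ij}\ge\la(r_i+r_j)$. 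The one geometric estimate used throughout is the Marcinkiewicz-type bound
\[
\sum_{j:\,j\ne i}\frac{r_j}{d_{ij}^{2}}\le\frac{C(\la,A_0)}{r_i},
\]
which follows from \eqref{chordarc}: the chord-arc condition forces $\mathcal{H}^1(\Gamma\cap B(x,\rho))\le 2A_0\rho$ for every disc $B(x,\rho)$, and since the arcs $\Gamma\cap\la D_j$ are pairwise disjoint of length $\gtrsim r_j$, one gets $\sum_{d_{ij}\sim 2^{m}r_i}r_j\lesssim 2^{m}r_i$, after which the geometric series in $m$ converges.

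First I would dispose of the off-diagonal terms in which two of the three points lie in the same disc, say $x,y\in D_i$ and $z\in D_j$ with $i\ne j$. For these one needs nothing but $R(x,y,z)\ge\tfrac12\diam\{x,y,z\}\gtrsim d_{ij}$, which is a consequence of $\la$-separation. Hence their total is $\lesssim\sum_{i\ne j}\|\mu_i\|^2\|\mu_j\|/d_{ij}^2$, and using $\|\mu_i\|^2\le r_i\|\mu_i\|$ and the Marcinkiewicz estimate this is $\le\sum_i\|\mu_i\|r_i\sum_{j\ne i}r_j/d_{ij}^2\le C\|\mu\|$.

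The terms with three distinct indices are the core. When $x\in D_i$, $y\in D_j$, $z\in D_k$ with $i,j,k$ distinct the three points are separated, so the Melnikov permutation identity $R(x,y,z)^{-2}=2\,\mathrm{Re}\sum_{\mathrm{cyc}}\big[(x-y)\overline{(x-z)}\big]^{-1}$ applies with no diagonal difficulty. Integrating in $y,z$ and setting $\mathcal{C}_{\mu-\mu_i}1(x):=\int_{\C\setminus D_i}(y-x)^{-1}d\mu(y)$ (a convergent integral on $D_i$), the sum of these terms becomes
\[
6\sum_i\big\|\mathcal{C}_{\mu-\mu_i}1\big\|_{L^2(\mu_i)}^{2}-6\sum_{i\ne j}\big\|\mathcal{C}_{\mu_j}1\big\|_{L^2(\mu_i)}^{2}\ \le\ 6\sum_i\big\|\mathcal{C}_{\mu-\mu_i}1\big\|_{L^2(\mu_i)}^{2},
\]
the second sum being dropped because it is nonnegative (one checks it is in fact $\le C\|\mu\|$, via $\|\mathcal{C}_{\mu_j}1\|_{L^2(\mu_i)}\lesssim\|\mu_j\|\,\|\mu_i\|^{1/2}/d_{ij}$ and the Marcinkiewicz bound). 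Next, on $D_i$ the function $\mathcal{C}_{\mu-\mu_i}1$ equals its value at the center up to $O(1)$, since $|\mathcal{C}_{\mu-\mu_i}1(x)-\mathcal{C}_{\mu-\mu_i}1(x_i)|\le r_i\sum_{j\ne i}\|\mu_j\|/d_{ij}^2\le C$. Thus the whole matter reduces to the single inequality
\[
\sum_i\|\mu_i\|\,\big|\mathcal{C}_{\mu-\mu_i}1(x_i)\big|^{2}\le C(\la,A_0)\,\|\mu\|.
\]

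Proving this last inequality is the step I expect to be the main obstacle, and it is the only place where the chord-arc hypothesis does real work. The plan is to compare $\mu$ with $\sigma:=\sum_j\sigma_j$, where $\sigma_j:=\frac{\|\mu_j\|}{\mathcal{H}^1(\Gamma\cap D_j)}\,\mathcal{H}^1|(\Gamma\cap D_j)$. This $\sigma$ has the same masses $\|\sigma_j\|=\|\mu_j\|$, is carried by $\Gamma$, and by \eqref{chordarc} has density $\le C(A_0)$ with respect to $\mathcal{H}^1|\Gamma$, hence linear growth. Because $\Gamma$ is chord-arc, the Cauchy operator on $\mathcal{H}^1|\Gamma$, and therefore on $\sigma$, is bounded on $L^2$, so $c^2(\sigma)\le C\|\sigma\|$; re-running the reduction above with $\sigma$ in place of $\mu$ (the two-coincide part of $c^2(\sigma)$ is nonnegative, the analogous subtracted sum is $\le C\|\sigma\|$, and $\mathcal{C}_{\sigma-\sigma_i}1$ is again nearly constant on $D_i$) yields $\sum_i\|\mu_i\|\,|\mathcal{C}_{\sigma-\sigma_i}1(x_i)|^2\le C\|\mu\|$. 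Finally one passes from $\sigma$ back to $\mu$: since $\int d(\mu_j-\sigma_j)=0$, the difference $\mathcal{C}_{\mu_j-\sigma_j}1(x_i)$ enjoys a dipole bound $\lesssim r_j\|\mu_j\|/d_{ij}^2$, and the error $\sum_i\|\mu_i\|\big|\sum_{j\ne i}\mathcal{C}_{\mu_j-\sigma_j}1(x_i)\big|^2$ is to be absorbed by $C\|\mu\|$ using $r_j\le d_{ij}/\la$, the Marcinkiewicz estimate and Cauchy--Schwarz. Making this transfer quantitative, with constants independent of the family $\{D_j\}$, is the delicate point; once it is in hand, collecting the diagonal, two-coincide and three-distinct contributions gives \eqref{f62}.
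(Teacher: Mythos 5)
Your overall architecture is sound and genuinely different from the paper's. The paper never passes to Cauchy potentials: it compares the integrand $1/R^2(x,y,z)$ for $\mu$ with that for an arc-length measure $\sigma$ on $\Gamma$ \emph{pointwise}, via the estimate $\sin\al\le\sin\al'+C\frac{r_k+r_l}{r_k+\cdots+r_l}+C\frac{r_j+r_l}{r_j+\cdots+r_l}$ on the angles of the two triangles, and then bounds the resulting error terms, which are trilinear sums with ``chained'' denominators $(r_j+\cdots+r_k)^2(r_k+\cdots+r_l)^2$, by iterated Riemann-sum comparisons with $\int_r^\infty x^{-2}dx$. Your route (Melnikov's permutation identity, localization of $\mathcal{C}_{\mu-\mu_i}1$ to the centers, and a dipole transfer from $\mu_j$ to an equal-mass arc measure $\sigma_j$) is attractive and most of its steps check out: the two-coincide case, the reduction to $\sum_i\|\mu_i\|\,|\mathcal{C}_{\mu-\mu_i}1(x_i)|^2$, the bound for the $\sigma$-version via $c^2(\sigma)\le C\|\sigma\|$, and the dipole bound $|\mathcal{C}_{\mu_j-\sigma_j}1(x_i)|\lesssim r_j\|\mu_j\|/d_{ij}^2$ are all correct.

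The gap is exactly where you flagged it, and the tools you name there do not close it. You must show
$\sum_i\|\mu_i\|\bigl(\sum_{j\ne i}r_j\|\mu_j\|/d_{ij}^2\bigr)^2\le C\|\mu\|$,
and Cauchy--Schwarz plus the Marcinkiewicz bound provably lose a factor: any splitting of the square via Cauchy--Schwarz leaves you with either $\sum_{i}1/d_{ij}^2$ (which diverges when many tiny discs accumulate at a fixed distance from $D_j$) or with $\sup_i\sum_j r_j^2/d_{ij}^2$, and the latter is only $O(\log)$ of the range of scales (a geometric cascade $r_m\sim d_m\sim 4^m$ makes it as large as the number of scales), not $O(1)$. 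The inequality is nevertheless true, but by a different elementary manipulation: expand the square into $\sum_{i,j,k}\|\mu_i\|\,r_j\|\mu_j\|\,r_k\|\mu_k\|\,d_{ij}^{-2}d_{ik}^{-2}$, assume by symmetry $d_{ij}\le d_{ik}$ so that $d_{jk}\le 2d_{ik}$, replace $d_{ik}^{-2}$ by $4d_{jk}^{-2}$ to obtain the chain $i$--$j$--$k$, then sum over $i$ with the Marcinkiewicz bound anchored at $j$ (absorbing the dangling $r_j$) and over $j$ with the bound anchored at $k$ (absorbing $r_k$), keeping $\|\mu_k\|$; the diagonal $j=k$ is handled similarly. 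Without some such step your proof is incomplete; with it, your argument gives a legitimate alternative to the paper's, at the cost of invoking the Melnikov identity and the curvature--Cauchy-transform dictionary where the paper stays entirely at the level of the triple integral.
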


At the beginning let us show that Theorem \ref{superth} is a direct consequence of Main Lemma and \eqref{f61}.
\begin{proof}[Proof of Theorem \ref{superth}]
Consider measures $\mu_j$ participating in \eqref{f61} for $F=E_j$, $j=1,\dots$ Then $\mu(D(x,r))\le Cr$ for any disc $D$, where $C=C(A_0)$ and $\mu=\sum\mu_j$. To prove this assertion, we fix a disc $D=D(x,r)$ and divide all discs $D_j$ onto two groups: $\cD_1:=\{D_j:D_j\cap D\ne\emptyset,\ r_j\le r\}$, $\cD_2:=\{D_j:D_j\cap D\ne\emptyset,\ r_j>r\}$. Since $\Gamma$ is chord-arc, $\sum_{D_j\in\cD_1}r_i\le Cr$, $C=C(A_0)$. It is easy to see that $\#\cD_2\le6$. Hence,
$$
\mu(D)\le\sum_{D_j\in\cD_1}\mu(D_j)+\sum_{D_j\in\cD_2}\mu(D_j\cap D)
\le \sum_{D_j\in\cD_1}r_j+6\mu(D)<Cr.
$$
Furthermore, Main Lemma implies the inequality $c^2(\mu)\le C\|\mu\|$, $C=C(\la,A_0)$. Thus, the measure $c\mu$ with an appropriate constant $c$ depending on $\la,A_0$, participates in \eqref{f61} for $F=E=\cup E_j$. So, $\g(E)\ge c\|\mu\|$, that implies Theorem \ref{superth}.
\end{proof}

\begin{proof}[Proof of Lemma \ref{le61}]
It is enough to consider the case of a finite set of discs $B_j$, $j=1,\dots,N$. We assume that these discs are enumerated in the order of increase of the natural parameters of their centers.

Let $\Gamma_j$ be arcs of $\Gamma$ such that $\Gamma_j\subset D_j$ and $\HH^1(\Gamma_j)=\mu(D_j)$. Let $\sigma_j:=\HH^1|\Gamma_j$ and $\sigma:=\sum\sigma_j$, so that $\sigma(D_j)=\mu(D_j)$. Obviously,
$$
c^2(\mu)=\bigg(\sum_j\iiint_{D_j^3}+\iiint_{\C^3\setminus\bigcup_j D_j^3}\bigg)\frac1{R^2(x,y,z)}\,d\mu(x)\,d\mu(y)\,d\mu(z)=:I_1+ I_2.
$$
Since $I_1=\sum_j c^2(\mu_j)$, we have to estimate only $I_2$. Our proof is based on the comparison of $I_2$ and the corresponding integral with respect to $\sigma$:
$$
\bar I_2:=\iiint_{\C^3\setminus\bigcup_j D_j^3}\frac1{R^2(x,y,z)}\,d\sigma(x)\,d\sigma(y)\,d\sigma(z).
$$
Notice that
\begin{equation}\label{f63}
\bar I_2<c^2(\sigma)\le C\|\sigma\|,\quad C=C(A_0).
\end{equation}
The last inequality is a consequence of two well-known facts. (a) The boundedness of the Cauchy operator $\cC_{\HH^1|\Gamma}$ on chord-arc curves -- see \cite[p.~330]{MV}. In particular,
$$
\|\cC_\s^\e{\bf1}\|_{L^2(\s)}^2\le\|\cC_{\HH^1|\Gamma}\chi_{\cup\Gamma_j}\|_{L^2(\HH^1|\Gamma)}^2 \le C\|\chi_{\cup\Gamma_j}\|_{L^2(\HH^1|\Gamma)}^2=C\|\s\|,\quad\e>0,
$$
where $C$ depends only on $A_0$. (b) The connection between the curvature of a measure and the norm of a Cauchy potential:
$$
\|\cC_\mu^\e{\bf1}\|_{L^2(\mu)}^2=\frac16c_\e^2(\mu)+O(\|\mu\|)
$$
for any measure $\mu\in\Sigma$ uniformly in $\e$ -- see for example \cite{To2}. Here $c_\e^2(\mu)$ is the truncated version of $c^2(\mu)$ defined in the same way as $c_\e^2(\mu)$, but the triple integral is taken over the set $\{(x,y,z)\in\C^3:|x-y|,|y-z|,|x-z|>\e\}$. This equality with $\mu=c\,\s\in\Sigma$, and the previous relations imply \eqref{f63}.

Obviously,
$$
I_2=\bigg(\iiint_{\Omega_1}+\iiint_{\Omega_2}\bigg)\frac1{R^2(x,y,z)}\,d\mu(x)\,d\mu(y)\,d\mu(z)=:I_{2,1}+ I_{2,2}\,,
$$
where
\begin{align*}
\Omega_1&:=\{D_j\times D_k\times D_l: j=k\ne l\vee j\ne k=l\vee j=l\ne k\},\\
\Omega_2&:=\{D_j\times D_k\times D_l: j\ne k,\ k\ne l,\ j\ne l\}.
\end{align*}
To estimate the integral over $\Omega_1$, it's sufficient to consider the subset
$$
\Omega'_1:=\{D_j\times D_k\times D_l: j\ne k=l\}.
$$
For $x\in D_j=D(x_j,r_j),\ y,z\in D_k,\ j\ne k$, we have
$$
2R(x,y,z)\ge|x-y|\ge c(r_j+r_{j+1}+\dots+r_k),\quad c=c(\la,A_0)
$$
(here we assume that $j<k$; the case $k<j$ is analogous). Then
\begin{multline*}
\iiint_{\Omega'_1}\frac1{R^2(x,y,z)}\,d\mu(x)\,d\mu(y)\,d\mu(z)\le C\bigg[\sum_{j=1}^{N-1}\|\mu_j\|\sum_{k=j+1}^{N}\frac{\|\mu_k\|^2}{(r_j+r_{j+1}+\dots+r_k)^2}\\
+\sum_{j=1}^{N-1}\|\mu_{N+1-j}\|\sum_{k=j+1}^{N}\frac{\|\mu_{N+1-k}\|^2} {(r_{N+1-j}+r_{j+1}+\dots+r_{N+1-k})^2}=:C[S_{N,1}+S_{N,2}].
\end{multline*}
Estimates for both terms on the right are the same. We estimate $S_{N,1}$ using the induction with respect to $N$.

1. $N=2$. Then
$$
S_{N,1}=\|\mu_1\|\cdot\frac{\|\mu_2\|^2}{(r_1+r_2)^2}\le\|\mu_1\|\le\|\mu_1\|+\|\mu_2\|.
$$

2. Suppose that the inequality
\begin{equation}\label{f64}
S_{N,1}=\sum_{j=1}^{N-1}\|\mu_j\|\sum_{k=j+1}^{N}\frac{\|\mu_k\|^2}{(r_j+\dots+r_k)^2}\le\|\mu_1\|+\dots+\|\mu_N\|
\end{equation}
holds for some $N\ge2$. For $N+1$ discs we have
\begin{align*}
S_{N+1,1}&=S_{N,1}+\sum_{j=1}^{N}\|\mu_j\|\frac{\|\mu_{N+1}\|^2}{(r_j+\dots+r_{N+1})^2}\\
&\le S_{N,1}+\|\mu_{N+1}\|^2\sum_{j=1}^{N}\frac{r_j}{(r_j+\dots+r_{N+1})^2}.
\end{align*}
The last sum is dominated by the integral
$$
\int_0^\infty\frac{dt}{(r_{N+1}+t)^2}=\frac1{r_{N+1}}.
$$
Hence,
$$
S_{N+1,1}\le S_{N,1}+\|\mu_{N+1}\|^2/r_{N+1}\le\|\mu_1\|+\dots+\|\mu_{N+1}\|.
$$
Thus, we proved \eqref{f64} and therefore estimated the triple integral over $\Omega_1$.

By symmetry,
$$
\iiint_{\Omega_2}\frac1{R^2(x,y,z)}\,d\mu(x)\,d\mu(y)\,d\mu(z) = 6\iiint_{\Omega'_2}\frac1{R^2(x,y,z)}\,d\mu(x)\,d\mu(y)\,d\mu(z),
$$
where $\Omega'_2:=\{D_j\times D_k\times D_l: j< k<l\}$. Moreover, we may restrict ourself by the integration over
$$
\Omega'_{2,1}:=\{D_j\times D_k\times D_l: j< k<l,\ r_j+\dots+r_k\ge\tfrac12(r_j+\dots+r_l)\}.
$$
Indeed, if we prove the inequality
\begin{equation}\label{f65}
\iiint_{\Omega'_{2,1}}\frac1{R^2(x,y,z)}\,d\mu(x)\,d\mu(y)\,d\mu(z)\le C\|\mu\|
\end{equation}
with $C=C(\la,A_0)$, then using the inverse parametrization of $\Gamma$, we get the same estimate for the triple integral over
$$
\Omega'_{2,2}:=\{D_j\times D_k\times D_l: j< k<l,\ r_k+\dots+r_l\ge\tfrac12(r_j+\dots+r_l)\}
$$
(here we use the same numeration of discs as before). Since $\iiint_{\Omega'_2}\le\iiint_{\Omega'_{2,1}}+\iiint_{\Omega'_{2,2}}$, \eqref{f64} and \eqref{f65} imply \eqref{f62}.

Fix indices $j,k,l$. For any triples $(x,y,z),(x',y',z')\in D_j\times D_k\times D_l$, the sine of the angle between the intervals $(y,z)$ and $(y',z')$ does not exceed
$$
C\,\frac{r_k+r_l}{r_k+\dots+r_l},\quad C=C(\la,A_0).
$$
For the angle between the intervals $(x,z)$ and $(x',z')$ we have $C\,\frac{r_j+r_l}{r_j+\dots+r_l}$. Denote by $\al$, $\al'$ the angles at $z$, $z'$ of the triangles $x,y,z$ and $x',y',z'$, correspondingly. Since $\sin(\al+\beta+\g)\le\sin\al+\sin\beta+\sin\g$ as $\al,\beta,\g\in[0,\pi]$, we get the estimate
$$
\sin\al<\sin\al'+C\,\frac{r_k+r_l}{r_k+\dots+r_l}+C\,\frac{r_j+r_l}{r_j+\dots+r_l}.
$$
Hence,
$$
\frac1{R(x,y,z)}=\frac{2\sin\al}{|x-y|}<\frac{C}{|x'-y'|}\bigg[2\sin\al'+ \frac{r_k+r_l}{r_k+\dots+r_l}+\frac{r_j+r_l}{r_j+\dots+r_l}\bigg].
$$
Therefore,
\begin{multline*}
\iiint_{\Omega_2}\frac1{R^2(x,y,z)}\,d\mu(x)\,d\mu(y)\,d\mu(z) \le C\bigg[\iiint_{\Omega'_2}\frac1{R^2(x',y',z')}\,d\s(x')\,d\s(y')\,d\s(z')\\
+\sum_{j=1}^{N-2}\|\mu_j\|\sum_{l=j+2}^{N}\sum_{k=j+1}^{l-1} \frac{r_k^2\,\|\mu_{k}\|\,\|\mu_{l}\|}{(r_j+\dots+r_{k})^2(r_k+\dots+r_{l})^2}\\
+\sum_{j=1}^{N-2}\|\mu_j\|\sum_{l=j+2}^{N}\sum_{k=j+1}^{l-1} \frac{r_l^2\,\|\mu_{k}\|\,\|\mu_{l}\|}{(r_j+\dots+r_{k})^2(r_k+\dots+r_{l})^2}\\
+\sum_{j=1}^{N-2}\|\mu_j\|r_j^2\sum_{l=j+2}^{N}\sum_{k=j+1}^{l-1} \frac{\|\mu_{k}\|\,\|\mu_{l}\|}{(r_j+\dots+r_{k})^2(r_j+\dots+r_{l})^2}\bigg]
=:C[I+S^{(1)}+S^{(2)}+S^{(3)}].
\end{multline*}
By \eqref{f63}, $I\le c^2(\s)\le C\|\s\|$. We estimate each of sums separately. Write $S^{(1)}$ as
$$
S^{(1)}=\sum_{j=1}^{N-2}\|\mu_j\|\sum_{k=j+1}^{N-1}\sum_{l=k+1}^{N} \frac{r_k^2\,\|\mu_{k}\|\,\|\mu_{l}\|}{(r_j+\dots+r_{k})^2(r_k+\dots+r_{l})^2}\,.
$$
Since the inner sum over $l$ does not exceed
$$
\frac{r_k^2\,\|\mu_{k}\|}{(r_j+\dots+r_{k})^2}\int_{r_k}^\infty\frac{dx}{x^2}=
\frac{r_k\,\|\mu_{k}\|}{(r_j+\dots+r_{k})^2},
$$
we get the estimate
\begin{equation}\label{f66}
\begin{split}
S^{(1)}&\le\sum_{j=1}^{N-2}\|\mu_j\|\sum_{k=j+1}^{N-1} \frac{r_k\,\|\mu_{k}\|}{(r_j+\dots+r_{k})^2} \le\sum_{k=2}^{N-1}\sum_{j=1}^{k-1} \frac{r_j\, r_k\,\|\mu_{k}\|}{(r_j+\dots+r_{k})^2}\\
&\le\sum_{k=2}^{N-1}\|\mu_{k}\|\sum_{j=1}^{k-1}r_k\int_{r_k}^\infty\frac{dx}{x^2}=
\sum_{k=2}^{N-1}\|\mu_{k}\|<\|\mu\| \,.
\end{split}
\end{equation}
Now we will use the possibility to consider only those $k$ for which $r_j+\dots+r_k\ge\frac12(r_j+\dots+r_l)$ (the set of such $k$ can be empty). Suppose that the last inequality holds for $p\le k\le l-1$. Then
\begin{multline*}
\sum_{j=1}^{N-2}\|\mu_j\|\sum_{l=j+2}^{N}\sum_{k=p}^{l-1} \frac{r_l^2\,\|\mu_{k}\|\,\|\mu_{l}\|}{(r_j+\dots+r_{k})^2(r_k+\dots+r_{l})^2}\\
\le4\sum_{j=1}^{N-2}\|\mu_j\|\sum_{l=j+2}^{N}\sum_{k=p}^{l-1} \frac{r_l^2\,\|\mu_{k}\|\,\|\mu_{l}\|}{(r_j+\dots+r_l)^2(r_k+\dots+r_{l})^2}\\
\le4\sum_{j=1}^{N-2}\|\mu_j\|\sum_{l=j+2}^{N}\frac{r_l\,\|\mu_l\|}{(r_j+\dots+r_l)^2}
\end{multline*}
(we estimate the sum with respect to $k$ in the same way as above). We may deal with the last double sum as in \eqref{f66}, or notice that this sum does not exceed
$$
\sum_{j=1}^{N-2}\|\mu_j\|\bigg[1+\sum_{l=j+1}^{N-1}\frac{r_l\,\|\mu_l\|}{(r_j+\dots+r_l)^2}\bigg],
$$
and use \eqref{f66} directly. Finally,
$$
S^{(3)}\le\sum_{j=1}^{N-2}\|\mu_j\|\sum_{l=j+2}^{N} \frac{r_j^2\,\|\mu_{l}\|}{(r_j+\dots+r_{l})^2r_j}<\sum_{j=1}^{N-2}\|\mu_j\|<\|\mu\|.
$$
Lemma \ref{le61} is proved.
\end{proof}

\section{Question on super-additivity}\label{q}

We make more accurate the question posed in Section \ref{int}. In Theorems \ref{superth}, \ref{sup} discs were $\lambda$-separated, where $\lambda>1$. But what if they are just disjoint? Namely, let
$D_j$ be circles with centers on a chord-arc curve with constant $A_0$ (or even on the real line $\R$), such that $ D_j \cap  D_k = \emptyset$, $j\not=k$. Let $E_j\subset D_j$ be arbitrary compact sets. Is it true that there exists a constant  $c=c(\la,A_0)>0$, such that
$$
\gamma\big(\bigcup E_j\big) \ge c \sli_j \gamma(E_j)\,?
$$
We cannot either prove or construct a counter-example to this claim.

\end{document}